\documentclass{article}

\usepackage[utf8]{inputenc}
\usepackage[english]{babel}

\usepackage{geometry}
\geometry{a4paper,top=3cm,bottom=3cm,left=3cm,right=3cm}
\usepackage{fancyhdr}
\usepackage{titlesec}
\usepackage{indentfirst}
\usepackage{float}

\usepackage[linktoc=all]{hyperref}
\usepackage{tikz}

\usepackage{amsmath,comment}
\usepackage{amssymb}
\usepackage{mathrsfs}
\usepackage{amsthm}
\usepackage{amsfonts}
\usepackage{ifthen}

\usepackage{cleveref}

\theoremstyle{plain}
\newtheorem{thm}{Theorem}[section]
\newtheorem{prop}[thm]{Proposition}
\newtheorem{lem}[thm]{Lemma}
\newtheorem{cor}[thm]{Corollary}
\newtheorem{defn}[thm]{Definition}
\newtheorem{prob}[thm]{Problem}

\newtheorem{introthm}{Theorem}

\newtheorem*{thm*}{Theorem}
\newtheorem*{cor*}{Corollary}

\theoremstyle{remark}

\newtheorem*{remark*}{Remark}
\newtheorem{remark}{Remark}

\DeclareMathOperator{\id}{id}

\DeclareMathOperator{\rank}{rk}

%%%% bold

\newcommand{\bw}{\mathbf{w}}

%%%% mathbb
\newcommand{\bbN}{\mathbb{N}}
\newcommand{\bbZ}{\mathbb{Z}}
\newcommand{\bbQ}{\mathbb{Q}}

%%% mathcal
\newcommand{\cA}{\mathcal{A}}

\newcommand{\cD}{\mathcal{D}}

\newcommand{\cG}{\mathcal{G}}

\newcommand{\cP}{\mathcal{P}}
\newcommand{\cR}{\mathcal{R}}

%%% mathfrak
\newcommand{\fD}{\mathfrak{D}}
\newcommand{\fF}{\mathfrak{F}}

% shortcuts
\newcommand{\rar}{\rightarrow}

\newcommand{\ol}[1]{\overline{#1}}
\newcommand{\ot}[1]{\widetilde{#1}}

\newcommand{\sgr}{\le}
\newcommand{\nor}{\trianglelefteq}
\newcommand{\gen}[1]{\langle #1 \rangle}
\newcommand{\pres}[2]{\langle #1 | #2 \rangle}
\newcommand{\im}[1]{\text{im}(#1)}

\newcommand{\divides}{\bigm|}

\newcommand{\FG}[1]{\mathrm{FG}(#1)}

\newcommand{\cyclicR}[1]{\cR_{=\bbZ}(#1)}
\newcommand{\notcyclicR}[1]{\cR_{\not=\bbZ}(#1)}
\newcommand{\aut}[1]{\mathrm{Aut}(#1)}
\newcommand{\out}[1]{\mathrm{Out}(#1)}
\newcommand{\outz}[1]{\mathrm{Out}_\mathrm{0}(#1)}

\newcommand{\outrel}[2]{\mathrm{Out}(#1 \text{ }\mathrm{rel}\text{ } #2)}
\newcommand{\outr}[1]{\mathrm{Out}_{\mathrm{r}}(#1)}
\newcommand{\perm}[1]{\mathrm{Perm}(#1)}
\newcommand{\Perm}{\mathrm{Perm}}

%%%%%%%%%%%%%%%%%%COMMENT BOX %%%%%%%%%%%%%%%%%%
%\newcommand{\showcommentsbox}{yes}
%\renewcommand{\showcomments}{no}

\newsavebox{\commentbox}
%
% begin comment
{\ifthenelse{\equal{\showcommentsbox}{yes}}%
% then begin comment in margin
{\footnotemark
        \begin{lrbox}{\commentbox}
        \begin{minipage}[t]{1.25in}\raggedright\sffamily\tiny
        \footnotemark[\arabic{footnote}]}
% else eat contents of the environment
{\begin{lrbox}{\commentbox}}}%
% end comment
{\ifthenelse{\equal{\showcommentsbox}{yes}}%
% then end comment
{\end{minipage}\end{lrbox}\marginpar{\usebox{\commentbox}}}
% else finish eating
{\end{lrbox}}}
%%%%%%%

\title{Automorphisms of graphs of groups with cyclic edge groups}

\author{
Dario Ascari\\
{\small \textit{Department of Mathematics, University of the Basque Country,}}\\
{\small \textit{Barrio Sarriena, Leioa, 48940, Spain}}\\
{\small e-mail: \texttt{ascari.maths@gmail.com}}\\
\and
Montserrat Casals-Ruiz\\
{\small \textit{Ikerbasque - Basque Foundation for Science and Matematika Saila,}}\\
{\small \textit{UPV/EHU,  Sarriena s/n, 48940, Leioa - Bizkaia, Spain}}\\
{\small e-mail: \texttt{montsecasals@gmail.com}}\\
\and
Ilya Kazachkov\\
{\small \textit{Ikerbasque - Basque Foundation for Science and Matematika Saila,}}\\
{\small \textit{UPV/EHU,  Sarriena s/n, 48940, Leioa - Bizkaia, Spain}}\\
{\small e-mail: \texttt{ilya.kazachkov@gmail.com}}
}

\date{}

\sloppy
\begin{document}

\maketitle

\begin{abstract}
We describe the outer automorphism group of a one-ended fundamental group of a graph of groups, when edge groups are cyclic, and vertex groups are torsion-free with cyclic centralizers. We show that in this case the outer automorphism group is virtually built from the outer automorphisms of the vertex groups (fixing some elements), the outer automorphisms of some associated generalized Baumslag-Solitar groups, and generalized twists - partial conjugations by elements in the centralizers of some elliptic elements in the group. 
\end{abstract}

%\keywords{GBS group, isomorphism problem}
%20E08; 20F10; 20F28
%Groups acting on trees
%Word problems, other decision problems
%Automorphism groups of groups

\section{Introduction}
The study of (outer) automorphisms of groups is a classical and central topic in group theory. Prominent examples include the automorphism groups of free abelian groups, free groups, and surface groups, namely $GL_n(\mathbb Z)$, $\out{F_n}$, and the mapping class group, respectively.

Beyond these classical cases, hyperbolic groups constitute another fundamental family. A key tool in understanding their outer automorphism groups is the JSJ decomposition, introduced by Rips and Sela as a group-theoretic analogue of the JSJ decomposition for 3-manifolds. For irreducible, closed, oriented 3-manifolds, the JSJ decomposition is uniquely determined in the strongest sense: it determines a unique (up to isotopy) minimal collection of disjoint, embedded, incompressible tori such that cutting the manifold along these tori results in components that are either atoroidal or Seifert-fibered, see \cite{Hat}. For one-ended torsion-free hyperbolic groups, a similar form of uniqueness can be achieved via the tree of cylinders, see \cite{GL11}. In this setting, the vertex groups in the resulting JSJ decomposition are of three types: cyclic groups, surface groups, or rigid hyperbolic groups. Sela showed that the outer automorphism group $\out{G}$ can be fully described in terms of this decomposition: it is virtually generated by Dehn twists along the edges and by mapping class groups associated to the surface-type vertex groups. More precisely, in \cite{Sel97} (see also \cite{Lev05}), the authors proved the following:

\begin{thm}[see Theorem 1.9 in \cite{Sel97} and Theorem 5.1 in \cite{Lev05}] \label{thm:sela} Let $G$ be a torsion-free one-ended hyperbolic group. There is an exact sequence
$$
1 \to \mathbb Z^n \to \outz{G} \to \prod MCG(\Sigma_v) \to 1,
$$
where $\outz{G}$ has finite index in $\out{G}$.
\end{thm}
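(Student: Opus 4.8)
The plan is to exploit the canonical JSJ decomposition of $G$, which by construction is invariant under the full automorphism group. First I would fix the canonical JSJ decomposition $\Gamma$ of $G$ (for a torsion-free one-ended hyperbolic group this is the decomposition arising from the tree of cylinders of \cite{GL11}), whose vertex groups are cyclic, surface (quadratically hanging), or rigid. Because $\Gamma$ is canonical, every $\phi \in \aut{G}$ carries the associated Bass--Serre tree to an equivariantly isomorphic one, so $\out{G}$ acts on the finite underlying graph of $\Gamma$ and on the finite set of conjugacy classes of vertex and edge groups. I would then \emph{define} $\outz{G}$ to be the finite-index subgroup of $\out{G}$ acting trivially on this combinatorial data: it fixes each vertex and edge, preserves the conjugacy class of each vertex group, and preserves the orientation of each cyclic edge group. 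Finiteness of the index is immediate, since the combinatorial data is finite.

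Next I would build the map $\outz{G} \to \prod_v MCG(\Sigma_v)$ by restriction. Given $\phi \in \outz{G}$, after composing with a suitable inner automorphism I may assume a representative preserves each vertex group $G_v$ setwise; the induced outer automorphism of $G_v$ is then well-defined up to conjugation by $G_v$. For a surface-type vertex this restriction lands in $\out{\pi_1\Sigma_v} \cong MCG(\Sigma_v)$ by Dehn--Nielsen--Baer, respecting the peripheral structure coming from the incident edge groups, and this assignment is the desired homomorphism. For cyclic vertices the restriction is trivial by the orientation convention, and --- this is the crucial input --- for \emph{rigid} vertices the restriction is inner, since rigidity of the vertex group relative to its incident edge groups means it admits no nontrivial modular automorphisms. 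Surjectivity onto $\prod_v MCG(\Sigma_v)$ follows because a homeomorphism of $\Sigma_v$ fixing its boundary induces an automorphism of $\pi_1\Sigma_v$ fixing each incident edge group, which therefore extends by the identity on the rest of $\Gamma$ to an automorphism of $G$, via the universal property of the fundamental group of a graph of groups.

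It remains to identify the kernel. An element of the kernel is represented by an automorphism inducing an inner automorphism on every vertex group; after adjusting by inner automorphisms of $G$ one may take a representative restricting to the identity on each $G_v$ and altering only the stable-letter and amalgamation data along the edges. These are precisely the Dehn twists: for each edge $e$ one twists by an element of the centralizer $Z_e$ of the edge group, which in the torsion-free hyperbolic setting is infinite cyclic. The twist group is therefore abelian, generated by the $Z_e \cong \bbZ$ modulo the relations imposed at the vertices (twists realised by conjugation inside a vertex group act trivially, and boundary twists of the surface vertices are absorbed here). Writing out this presentation and using torsion-freeness to rule out finite-order twists shows the twist group is free abelian of some rank $n$, giving the kernel $\bbZ^n$ and completing the exact sequence.

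I expect the main obstacle to be the rigidity step: establishing that an automorphism fixing the combinatorial JSJ data and acting trivially on the surface and cyclic vertices is, up to Dehn twists, inner. This rests on the deep rigidity of rigid vertex groups --- that they carry no nontrivial modular automorphisms relative to their peripheral structure --- which is the content of Rips--Sela theory and is exactly why the canonical JSJ is the right object from which to start. A secondary technical point is the bookkeeping required to pass between the ``up to conjugacy'' action on vertex groups and honest representatives, and to confirm that the twist group is genuinely torsion-free rather than merely finitely generated abelian.
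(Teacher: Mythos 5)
You should first note that the paper does not actually prove this statement: it is quoted from the literature (Sela, Levitt), and the authors only indicate after Corollary~\ref{cor:graph_hyperbolic_group} how it is recovered as the hyperbolic special case of Theorem~\ref{thm:description-outrel} --- by Bestvina--Feighn, a hyperbolic $G$ contains no non-cyclic Baumslag--Solitar subgroups, so every component of the GBS core is cyclic, every $\outr{G_i}$ is trivial and every $C(\rho)$ is infinite cyclic, whence $\outr{G}$ is finitely generated abelian and the first sequence of Corollary~\ref{cor:graph_hyperbolic_group} supplies the mapping-class-group quotient. Your proposal instead reconstructs the classical Sela--Levitt argument directly from the canonical JSJ; that is a legitimate and genuinely different route, and most of it is the standard argument.

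There is, however, one concrete gap. You define $\outz{G}$ as the subgroup acting trivially on the finite combinatorial data of the JSJ and declare finiteness of the index ``immediate''; you then assert that for a rigid vertex the induced outer automorphism is inner ``since rigidity \dots means it admits no nontrivial modular automorphisms.'' Rigidity does not give triviality. What is true --- and this is the deep input, via Bestvina--Paulin limiting actions and the shortening argument (equivalently, Paulin's theorem that a hyperbolic group with infinite outer automorphism group splits) --- is that the group of outer automorphisms of a rigid vertex group preserving the incident edge groups up to conjugacy is \emph{finite}. With your definition of $\outz{G}$ you would only obtain a sequence of the form $1 \to \bbZ^n \to \outz{G} \to \prod MCG(\Sigma_v)\times(\text{finite group})$, exactly as in Corollary~\ref{cor:graph_hyperbolic_group}. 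To get the clean sequence of the statement you must enlarge the definition of $\outz{G}$ to require in addition that the restriction to each rigid (and cyclic) vertex group be inner, and then finiteness of the index of \emph{that} subgroup is precisely the finiteness theorem for rigid vertices --- it is not immediate from finiteness of the combinatorial data. A secondary incompleteness: your identification of the kernel presents the twist group as a quotient of $\prod_e \bbZ$ by vertex relations and infers freeness from torsion-freeness of $G$, but a quotient of a free abelian group can perfectly well have torsion; ruling this out requires the explicit computation of the relations among twists (as in Levitt's analysis), not merely the absence of torsion in $G$.
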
 

These ideas were further generalized for one-ended relatively hyperbolic groups by Guirardel and Levitt, see \cite{GL15}. In particular, for toral relatively hyperbolic groups $G$, they showed that $\out{G}$ is virtually built out of mapping class groups and subgroups of $GL_n(\mathbb Z)$ fixing certain basis elements. When more general parabolic groups are allowed, these subgroups of $GL_n(\mathbb Z)$ have to be replaced by McCool groups: automorphisms of parabolic groups acting trivially (i.e. by conjugation) on certain subgroups. They also determine when $\out{G}$ is infinite and showed that the infiniteness of $\out{G}$ comes from the existence of a splitting with infinitely many twists, or having a vertex group that is maximal parabolic with infinitely many automorphisms acting trivially on incident edge groups.

However, in general, the JSJ decomposition is far from being unique and  can exhibit considerable flexibility, if not in the pieces obtained through cutting, at least in the way they are glued together. Given a splitting $T$ of a group $G$ (i.e. an action of $G$ on a tree $T$), the \textit{deformation space} $\fD_G(T)$ is the family of all splittings of $G$ which can be obtained from $T$ using \textit{elementary deformations}, see \cite{For02}. The easiest examples of one-ended groups that exhibit infinitely many different gluing patterns, i.e. have an infinite deformation space, belong to the class of fundamental groups of graphs of groups with infinite cyclic vertex and edge groups, known as \emph{generalized Baumslag-Solitar groups, GBS groups or GBSs}. Due to the complexity of studying infinite deformation spaces, the study of automorphisms of fundamental groups of graphs of groups has so far been mainly restricted to the study of the subgroup of automorphisms that preserve a given decomposition, see \cite{BJ96, Lev05}. 

\medskip

In this paper, we study the outer automorphism group of (some families of) fundamental groups of graphs of groups with cyclic edge groups and show that it is virtually built from the outer automorphisms of the vertex groups, the outer automorphisms groups of some canonically determined GBS groups, and generalized twists (partial conjugations by elements in the centralizers of some elliptic elements).

Let us now be more precise. Let $G$ be a finitely presented one-ended torsion-free group; we denote by $\out{G}$ the outer automorphism group of $G$. We consider a cyclic JSJ decomposition for $G$, and we denote by $\outz{G}\sgr\out{G}$ the finite index (normal) subgroup of automorphisms that do not permute the vertices of the JSJ. We denote by $\outr{G}\sgr\outz{G}$ the (normal) subgroup of automorphisms that act as the identity on the vertex groups of the JSJ; this means that we are factoring away the internal automorphisms of the vertices. Although we do not make any hyperbolicity assumptions on the group $G$, the relationship between the groups $\out{G},\outz{G},\outr{G}$ is a natural extension of the hyperbolic case (see \cite{Sel97,Lev05}), and can be summarized in the following result.

\begin{introthm}\label{introthm:preliminary-sequences}
The following sequences are exact
\[
    1\rar\outz{G}\rar\out{G}\rar\text{{\rm(}finite group of permutations{\rm)}}
\]
and
\[
    1\rar\outr{G}\rar\outz{G} \rar \left(\prod\out{G_v}\right)\times\text{{\rm(}finitely generated abelian group{\rm)}},
\]
where the product runs over the vertex groups of the JSJ that are not isomorphic to $\bbZ$.
\end{introthm}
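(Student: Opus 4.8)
The plan is to derive both exact sequences from the canonicity of the cyclic JSJ decomposition, combined with a restriction-to-vertex-groups analysis. For the first sequence the key point is that the decomposition is $\aut{G}$-invariant. Since the edge groups are cyclic and the vertex groups have cyclic centralizers, the natural commensurability relation on edge stabilizers is admissible, so I would replace the chosen JSJ tree $T$ by its tree of cylinders $T_c$, which lies in the same deformation space and is canonical in the strongest sense: every $\alpha\in\aut{G}$ carries $T_c$ to an $\alpha$-twisted copy of itself (see \cite{GL11,GL15}). Each $\alpha$ therefore induces a simplicial automorphism of $T_c$ compatible with $\alpha$, which descends to an automorphism of the finite quotient graph $\Gamma=G\backslash T_c$. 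Composing with the action on the finite vertex set yields a homomorphism $\out{G}\rar\mathrm{Sym}(V\Gamma)$ whose image is a finite group of permutations and whose kernel is, by definition, exactly $\outz{G}$. This gives the first exact sequence and shows $\outz{G}$ has finite index.

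For the second sequence, fix $\alpha\in\outz{G}$, so that $\alpha$ fixes every vertex $v$ of $\Gamma$. Choosing a representative $\hat\alpha\in\aut{G}$ and a conjugating element realizing the equality $\hat\alpha(G_v)=G_v$ (possible because $\hat\alpha(G_v)$ is conjugate to $G_v$), restriction yields $\hat\alpha|_{G_v}\in\aut{G_v}$. I would first verify that its class $\rho_v(\alpha)\in\out{G_v}$ is well defined: the ambiguity is conjugation by $\mathrm{Inn}(G)$ composed with conjugation by $N_G(G_v)$, and here I would use one-endedness together with the cyclic-centralizer hypothesis to show that this ambiguity acts on $G_v$ only through inner automorphisms, so that $\rho_v\colon\outz{G}\rar\out{G_v}$ is a well-defined homomorphism. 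Assembling these over the non-cyclic vertices gives $\rho=\prod_{G_v\not\cong\bbZ}\rho_v$.

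The remaining, abelian, information records the action on the cyclic part of the decomposition: for each cyclic vertex group and each (cyclic) edge group, $\alpha$ acts on a generator by $\pm 1$, and these orientation data, subject to the compatibility relations imposed by the edge inclusions across the finite graph $\Gamma$, assemble into a finitely generated abelian group $A$ together with a homomorphism $\sigma\colon\outz{G}\rar A$. Taking $\rho\times\sigma$ as the second map, I would finally identify its kernel with $\outr{G}$. One inclusion is immediate: an automorphism that is the identity on every vertex group kills $\rho$ and preserves all orientations. For the converse, vanishing of $\rho$ and $\sigma$ means $\alpha$ is inner on each non-cyclic vertex group and orientation-preserving on the cyclic vertices and edges, and I would patch the vertexwise inner corrections into a single representative that is simultaneously the identity on all vertex groups, i.e. a generalized twist, placing $\alpha$ in $\outr{G}$.

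I expect two main obstacles. The first is establishing the $\aut{G}$-invariance of the decomposition precisely in this class (torsion-free vertex groups with cyclic centralizers and cyclic edge groups), which underpins the entire argument. The second, and more delicate, is the global realization step above: reconciling the independent inner adjustments made at the various vertices, together with the freedom of twisting along edges, so that the local triviality measured by $\rho$ and $\sigma$ upgrades to a single automorphism restricting to the identity on every vertex group at once.
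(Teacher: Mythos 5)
Your treatment of the second sequence has a genuine gap at the cyclic vertex groups. You assert that $\alpha$ ``acts on a generator by $\pm1$'' and build the abelian invariant $\sigma$ out of orientation data only. This is false in the generality of the theorem: when $G$ contains solvable Baumslag--Solitar subgroups, the generator $z$ of a cyclic vertex group may be conjugate in $G$ to a proper power $z^a$ with $|a|>1$, and an automorphism $\theta\in\outz{G}$ may send $z$ to a conjugate of $z^b$ with $b\neq\pm1$. The paper's invariant is precisely designed for this: one forms the group $P_v$ of rationals supported on the primes $p$ for which $z$ is conjugate to some $z^a$ with $p\mid a$, the subgroup $Q_v\sgr P_v$ generated by the integers $a$ with $z$ conjugate to $z^a$, and records the class $b/Q_v\in P_v/Q_v$; this quotient is the finitely generated abelian group in the statement (the introduction explicitly flags it as the new phenomenon absent in the hyperbolic case). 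With only $\pm1$ data your $\sigma$ is not even well defined on such automorphisms, and if one forces a definition the kernel of $\rho\times\sigma$ becomes strictly larger than $\outr{G}$ (it would contain $\theta$ with $\theta(z)=xz^b\ol{x}$, $b>1$, which does not act as a conjugation on $G_v$). Exactness at $\outz{G}$ therefore fails as you have set it up. A secondary point: the ``global realization'' step you flag as delicate is not needed, because $\outr{G}$ is defined by requiring, for each vertex $v$ separately, some $x$ with $\theta(g)=xg\ol{x}$ on $G_v$; there is no need to patch the vertexwise corrections into a single representative that is simultaneously the identity on all vertex groups.

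On the first sequence your route differs from the paper's but is plausible. The paper does not invoke the tree of cylinders: it takes a \emph{totally reduced} JSJ decomposition $J$, observes that the twisted tree $\theta(J)$ is again one, and proves directly (via the fact that mutual inclusions of vertex stabilizers force equality of orbits in a totally reduced decomposition) that there is a canonical bijection between vertex orbits of any two such trees, yielding the permutation homomorphism. Your tree-of-cylinders argument would also work, at the cost of verifying that the commensurability relation on cyclic edge stabilizers is admissible in this class and of replacing $V(\Gamma)$ by the vertex set of the quotient of $T_c$; the paper's argument is more elementary and stays inside the given deformation space. For the non-cyclic vertices of the second sequence your construction of $\rho_v$ matches the paper's: there the key point is simply that any two elements conjugating $G_v$ onto $\theta(G_v)$ differ by an element of $G_v$ (using that $G_v\not\cong\bbZ$ forces $\theta(G_v)=xG_v\ol{x}$ rather than a proper inclusion), so the ambiguity is inner.
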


We note that in \Cref{introthm:preliminary-sequences} there are finitely generated abelian groups that do not occur under the hyperbolicity assumption. These abelian groups appear only if $G$ contains solvable Baumslag-Solitar groups. More precisely, a cyclic vertex group might be conjugated to a proper subgroup of itself, and in this case, the outer automorphisms that send the generator to a proper power are encoded (up to conjugation) in a finitely generated abelian group. Recall that in $\outr{G}$, we only consider outer automorphisms that fix the generator of the vertex group (up to conjugacy).

We further study the structure of the group $\outr{G}$ and show that this group is essentially determined by those of the GBSs, as formulated in the following

\begin{introthm}[Theorem \ref{thm:description-outrel}]\label{introthm:description-outrel}
    Let $\fF$ be a family of finitely presented torsion-free groups with cyclic centralizers. Let $\cG$ be a graph of groups with vertex groups in $\fF$ and with infinite cyclic edge groups. Suppose that $G=\pi_1(\cG)$ is freely indecomposable. Then there is an exact sequence
    \[
        1\rar Z\rar\prod_{\rho\in\cR}C(\rho)\xrightarrow{\tau} \outr{G}\xrightarrow{\sigma}\outr{G_1}\times\outr{G_2}\times\dots \times\outr{G_s}\times(D_\infty)^t \to 1
    \]
    for some finite set of elliptic elements $\cR\subseteq G$ and their centralizers $C(\rho)$ in $G$; for some finitely generated free abelian group $Z$; for some integers $s,t\ge0$, and for some GBSs $G_1,\dots,G_s$ {\rm(}here $D_\infty$ is the infinite dihedral group{\rm)}. 
\end{introthm}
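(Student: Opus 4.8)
The plan is to analyze $\outr{G}$ via the action on the Bass–Serre tree $T$ of the JSJ decomposition $\cG$. Recall that $\outr{G}$ consists of outer automorphisms acting as the identity on the vertex groups (up to conjugacy). The natural strategy is to understand how such an automorphism can move the pieces of the decomposition while fixing the vertex groups, and to separate two independent sources of flexibility: twisting along edges (which produces the abelian/torsion-free contribution), and the freedom in how the edge groups embed and how the cyclic structure is rearranged (which produces the GBS and dihedral contributions).

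First I would set up the two maps. For the map $\sigma$ to the right-hand factor, the idea is that an element of $\outr{G}$ induces, on the maximal abelian (cyclic) substructure of the decomposition, a well-defined outer automorphism. The GBS factors $G_1,\dots,G_s$ should arise as the \emph{maximal sub-GBSs} of $\cG$: maximal connected subgraphs of groups all of whose vertex groups are $\bbZ$. Since an automorphism fixing the non-cyclic vertex groups must preserve the adjacency pattern to these cyclic subgraphs, it descends to an outer automorphism of each such sub-GBS, giving the factors $\outr{G_i}$. The dihedral factors $(D_\infty)^t$ should come from edges (or cyclic pieces) where the only residual symmetry is the inversion of the edge group together with a reflection swapping its two sides — precisely the $\bbZ/2\ast\bbZ/2$ worth of ambiguity one expects from an unoriented cyclic edge. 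I would define $\sigma$ by recording the induced data on each such piece and check it is a well-defined homomorphism.

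Second, I would identify the kernel of $\sigma$ with the image of the twist map $\tau$. The twists are the generalized Dehn twists: for an elliptic element $\rho$ lying in an edge group, conjugating one side of a splitting by an element of the centralizer $C(\rho)$ yields an automorphism that is trivial on vertex groups and trivial on the induced cyclic/GBS data. The set $\cR$ is the finite collection of such elliptic elements (one per edge orbit, say), and $\prod_{\rho\in\cR}C(\rho)$ is the group of all twist data. The map $\tau$ sends twist data to the corresponding product of twists. Exactness $\ker\sigma=\image\tau$ amounts to showing that any automorphism trivial on vertices and on the GBS/dihedral data is realized by twists — this is a Bass–Serre / deformation-space argument: lift the given automorphism to an isomorphism of trees, use that it induces the identity on all relevant local data, and conclude it differs from the identity only by conjugations along edges, i.e. by twists.

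Finally, the kernel $Z$ of $\tau$ records the relations among the twists: twisting by elements that are already visible as conjugation in $G$, or the compatibility relations forced at vertices of valence $\geq 2$ (a twist by a central element that can be absorbed simultaneously on several incident edges acts trivially). I expect $Z$ to be free abelian of finite rank, with generators indexed by the non-separating/cycle structure of $\cG$ and by coincidences among the $C(\rho)$. The main obstacle will be the exactness at $\outr{G}$, i.e.\ proving $\image\tau=\ker\sigma$: the inclusion $\image\tau\subseteq\ker\sigma$ is a direct computation, but the reverse inclusion requires a genuine realization theorem showing that the GBS/dihedral data are a \emph{complete} invariant modulo twists. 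This will hinge on the cyclic-centralizer hypothesis in $\fF$ (which rigidifies how edge groups sit inside vertex groups, forcing non-cyclic vertices to contribute only through $\outr{G_v}$ and eliminating hidden symmetries) and on the one-endedness/freely-indecomposable assumption (which guarantees the JSJ tree is canonical enough that any automorphism preserves its deformation space). I would handle it by a careful induction on the structure of $\cG$, peeling off the maximal sub-GBSs and reducing to the previously established sequences of \Cref{introthm:preliminary-sequences}.
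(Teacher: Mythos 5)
Your overall architecture (twists by centralizers as the kernel, a quotient map onto automorphism groups of associated GBS data, a free abelian group of relations among twists) matches the paper's, but there is a genuine gap in how you identify the target of $\sigma$, and it is not a cosmetic one. You propose to take the GBS factors $G_1,\dots,G_s$ to be the \emph{maximal connected subgraphs of groups of $\cG$ all of whose vertex groups are $\bbZ$}. The paper instead builds a \emph{GBS core}: it fixes a choice of roots $\cR(\cG)$ (one root per conjugacy class of elements of non-cyclic vertex groups to which edge groups attach, plus generators of cyclic vertex groups), then forms a new, possibly disconnected, graph whose \emph{vertices are the roots} $\rho$ with vertex groups $\gen{\rho}$, and whose edges are all the edges of $\Gamma$, reattached to the corresponding roots. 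Crucially, this construction extracts cyclic vertices from \emph{inside the non-cyclic vertex groups}. Your version would, for instance, produce nothing at all for a graph of groups with a single non-cyclic vertex group and several loops attached along powers of a common root, whereas in that situation the paper's $G_1$ is a nontrivial GBS and $\outr{G}$ genuinely surjects onto $\outr{G_1}$. So with your definition the sequence is not exact at the right-hand term, and no induction "peeling off" cyclic subgraphs can repair this, because the relevant GBS structure is invisible at the level of the underlying graph of $\cG$.

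The missing technical ingredient is the mechanism that makes $\sigma$ well defined and surjective: for $\theta\in\outr{G}$ one passes to a relative endomorphism $\Theta$ of the universal group with $\Theta(e)=g_0e_1g_1\cdots e_\ell g_\ell$, and the relation $\rho_{\ol e}^{m_{\ol e}}\Theta(e)=\Theta(e)\rho_e^{m_e}$ together with Britton's lemma and the \emph{cyclic centralizer} hypothesis forces every connecting element $g_i$ to be a power of a root; hence the very same word defines a relative endomorphism of the GBS core, and conversely any relative automorphism of the core lifts back verbatim. This is where the hypothesis on $\fF$ enters, not merely as a "rigidifying" condition as you suggest. Two further points you should repair: the twist $\tau$ must twist \emph{all} edges incident to a given root by the \emph{same} element of $C(\rho)$ (twisting different edges at the same root by different centralizer elements can fail to be an automorphism, e.g.\ in $\mathrm{BS}(2,4)$), so indexing $\cR$ "one per edge orbit" is not the right bookkeeping; and the $(D_\infty)^t$ factors arise concretely as $\outrel{\pi_1(\cD_i)}{T_i}$ for the components of the core with fundamental group $\bbZ^2$ or the Klein bottle group, not from an ad hoc "unoriented edge" symmetry.
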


The GBSs are canonically associated with the decomposition of $G$ and can be roughly described as follows. We take a JSJ decomposition of $G$, for each non-cyclic vertex stabilizer, we consider maximal infinite cyclic subgroups corresponding to conjugacy classes in its peripheral structure. We take a non-cyclic vertex stabilizer $G_v$, and we split that vertex into several maximal infinite cyclic subgroups, one for each conjugacy class in its peripheral structure. An edge that was previously glued on a certain conjugacy class, will now be glued on the corresponding infinite cyclic group. We reiterate the procedure for all non-cyclic vertices of the JSJ decomposition, and we obtain a (possibly disconnected) GBS $\Delta$. We define $G_1,\dots,G_k$ as the fundamental groups of the connected components of $\Delta$.

We point out that the above Theorem \ref{introthm:description-outrel} works in particular for one-ended fundamental groups of graphs of torsion-free hyperbolic groups with cyclic edge groups, even when they are not necessarily hyperbolic; in this case, and up to finite index, the only difference between $\out{G}$ and $\outr{G}$ is given by the mapping class groups of the flexible surfaces in the JSJ decomposition as stated in the following.

\begin{cor*}[\Cref{cor:graph_hyperbolic_group}]
Let $\cG$ be a graph of groups with torsion-free hyperbolic vertex groups and with cyclic edge groups. Suppose that $G=\pi_1(\cG)$ is freely indecomposable. Then there are exact sequences
    \[
    \begin{array}{c}
    1 \to \outr{G} \to \outz{G} \to \prod MCG(\Sigma_v)\times (\hbox{finite group})\times (\hbox{fg abelian});\\[0.2cm]
        1\rar Z \rar\prod_{\rho\in\cR}C(\rho) \xrightarrow{\tau} \outr{G}\xrightarrow{\sigma}\outr{G_1}\times\outr{G_2}\times\dots \times\outr{G_s}\times(D_\infty)^t \to 1
    \end{array}
    \]
    for some finite set of elliptic elements $\cR\subseteq G$, for some finitely generated free abelian group $Z$, for some integers $s,t\ge0$, and for some GBSs $G_1,\dots,G_s$ {\rm(}here $D_\infty$ is the infinite dihedral group{\rm)}. 
    
\end{cor*}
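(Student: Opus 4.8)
The plan is to obtain both sequences by specialising the two main theorems to the family $\fF$ of torsion-free hyperbolic groups, the only extra ingredients being classical facts about hyperbolic groups and their cyclic JSJ decompositions. The second sequence is almost immediate from \Cref{introthm:description-outrel}: a torsion-free hyperbolic group is finitely presented, torsion-free, and has cyclic centralizers, because the centralizer of any nontrivial element of such a group is infinite cyclic. Thus the vertex groups of $\cG$ lie in an admissible family $\fF$, and since $G=\pi_1(\cG)$ is freely indecomposable by hypothesis, \Cref{introthm:description-outrel} applies verbatim and yields
\[
  1\rar Z\rar\prod_{\rho\in\cR}C(\rho)\xrightarrow{\tau}\outr{G}\xrightarrow{\sigma}\outr{G_1}\times\outr{G_2}\times\dots\times\outr{G_s}\times(D_\infty)^t\to 1,
\]
which is the second exact sequence of the corollary.

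For the first sequence I would start from the second exact sequence of \Cref{introthm:preliminary-sequences}, which exhibits $\outr{G}$ as the kernel of a homomorphism
\[
  \outz{G}\rar\Big(\prod_{G_v\not\cong\bbZ}\out{G_v}\Big)\times(\hbox{fg abelian}),
\]
the product ranging over the non-cyclic vertex groups of the cyclic JSJ of $G$. It remains to locate the image inside $\prod\out{G_v}$. An element of $\outz{G}$ restricts on each $G_v$ to an outer automorphism preserving the conjugacy classes of the incident edge groups, so the image lies in $\prod\outrel{G_v}{\cP_v}\times(\hbox{fg abelian})$. The structural point I would establish is that each non-cyclic $G_v$ is torsion-free hyperbolic: it arises as a QH or rigid piece assembled from the hyperbolic vertex groups of $\cG$ over cyclic edges, these being surface groups in the QH case and quasiconvex hyperbolic subgroups in the rigid case.

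I would then invoke the QH/rigid dichotomy underlying \Cref{thm:sela} (\cite{Sel97,Lev05}). A JSJ vertex group admits no essential cyclic splitting relative to its peripheral structure, so $\outrel{G_v}{\cP_v}$ is naturally isomorphic to the mapping class group $MCG(\Sigma_v)$ of the underlying surface when $G_v$ is QH, and is finite when $G_v$ is rigid; the latter holds because the relative outer automorphism group of a rigid hyperbolic vertex is finite (if it were infinite, $G_v$ would split over a cyclic subgroup relative to $\cP_v$, by the relative form of Paulin's theorem, contradicting rigidity). Collecting the finite contributions of the rigid vertices into a single finite factor identifies $\prod\outrel{G_v}{\cP_v}$ with $\prod MCG(\Sigma_v)\times(\hbox{finite group})$, so that corestricting the homomorphism of \Cref{introthm:preliminary-sequences} to this subgroup gives
\[
  \outz{G}\rar\prod MCG(\Sigma_v)\times(\hbox{finite group})\times(\hbox{fg abelian})
\]
with unchanged kernel $\outr{G}$, which is the first exact sequence.

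The step I expect to be the main obstacle is the structural claim of the second paragraph: that the non-cyclic vertex groups of the cyclic JSJ of $G$ are exactly the torsion-free hyperbolic QH and rigid pieces built from the vertex groups of $\cG$. This requires controlling how the JSJ of $G$ interacts with the given splitting $\cG$ and with the GBS pieces $G_1,\dots,G_s$ produced in \Cref{introthm:description-outrel}, and in particular ruling out that a non-cyclic JSJ vertex fails to be (relatively) hyperbolic. Granting this, the remaining points—the finiteness of the relative $\Out$ of a rigid hyperbolic vertex and the identification of the QH contribution with $MCG(\Sigma_v)$—are standard, but must be set up carefully relative to the peripheral structure $\cP_v$.
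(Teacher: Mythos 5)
Your proposal is correct and follows essentially the same route as the paper: the second sequence is a direct specialization of Theorem \ref{thm:description-outrel} (torsion-free hyperbolic groups are finitely presented with cyclic centralizers), and the first is obtained from Proposition \ref{prop:outr} by identifying the image of $\eta$ on the flexible (surface), rigid, and cyclic vertices of the cyclic JSJ, which is exactly the content of the paper's explanatory paragraph accompanying the corollary. The structural point you flag as the main obstacle is the one the paper disposes of by citation rather than argument: non-flexible vertex groups of the cyclic JSJ are universally elliptic, hence conjugate into the hyperbolic vertex groups of $\cG$, while the flexible ones are fundamental groups of surfaces with boundary \cite{GL17,ACK-iso1}.
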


An important corollary from the combination theorem of Bestvina and Feighn, see \cite{BF92}, characterizes when a graph of groups $G$ with torsion-free hyperbolic vertex groups and cyclic edge groups is again hyperbolic, and it is precisely when the group $G$ does not contain (non-cyclic) Baumslag-Solitar subgroups. In this setting, the associated GBS groups $G_i$ are cyclic and so the relative outer automorphism group $\outr{G_i}$ is trivial, and the centralizers of elements $C(\rho)$ are also cyclic. Therefore, $\outr{G}$ is free abelian and we recover the result for torsion-free hyperbolic groups, namely Theorem \ref{thm:sela}. 

\begin{remark}
When studying outer automorphisms of finitely generated torsion-free groups with infinitely many ends in relation to their splittings over the trivial group, the main tool is a generalization of Whitehead moves, inspired by techniques originally developed for the study of outer automorphisms of free groups \cite{CZ84a,CZ84b,HL92}. However, there is no direct algebraic relationship between the outer automorphism group of such a group $G$ and that of a free group. In contrast, in the context of cyclic JSJ splittings for finitely presented one-ended groups, we show that the relationship between the outer automorphism group of $G$ and those of the associated GBSs is algebraic: the outer automorphism group $\outr{G}$ surjects onto the direct product of the outer automorphism groups $\outr{G_i}$ of the associated GBSs. These results highlight once again the central importance of understanding GBSs and the structure of their outer automorphism groups, since they serve as foundational building blocks for the study of automorphism groups of broader classes of groups.
\end{remark}

Our results raise a natural question. As in the original work of Sela for hyperbolic groups, one of the standing assumptions is that the group be torsion-free. Guirardel and Levitt gave a description of the outer automorphism groups of relatively hyperbolic groups without any assumption on the torsion. Therefore, we arrive at the following

\begin{prob}
Describe the outer automorphism group of a one-ended fundamental group of a graph of groups, with virtually cyclic edge groups. Is it possible to obtain an analogue of {\rm \Cref{introthm:description-outrel}}? If so, which assumptions on the centralizers in the vertex groups are required?
\end{prob}

In particular, it would be interesting to know whether it is possible to extend our results to one-ended graphs of groups with hyperbolic vertex groups and virtually cyclic edge groups.

\medskip

\textbf{Organization of the paper.} In Section \ref{sec:preliminaries}, we introduce some background. In Section \ref{sec:GBS-centralizers}, we give a description of centralizers of elements in GBS groups. In Section \ref{sec:relautos}, we introduce the group of relative outer automorphisms of a fundamental group of the graph of groups $G$. In Section \ref{sec:comparing-automorphism-groups}, we describe the relationship between the outer automorphism of $G$ and its relative outer automorphism, and we give a description of the relative outer automorphism groups of $G$ in terms of the outer automorphisms of GBSs.

\subsection*{Acknowledgements}

This work was supported by the Basque Government grant IT1483-22. The second author was supported by the Spanish Government grant PID2020-117281GB-I00, partly by the European Regional Development Fund (ERDF), the MICIU /AEI /10.13039/501100011033 / UE grant PCI2024-155053-2.

\section{Preliminaries}\label{sec:preliminaries}

In this section, we recall the fundamental concepts and set up the notation that we are going to use on graphs of groups and JSJ decompositions. We point out that Section \ref{sec:preliminaries} is completely expository, based on the already existing literature, and the (few) proofs provided are well-known standard arguments. For the Bass-Serre correspondence, we refer the reader to \cite{Ser77}. For a general and in-depth treatment of the theory of JSJ decomposition, we refer the reader to \cite{GL17}. We also remark that we are mostly going to apply the results of this section to finitely presented torsion-free groups and to splittings over the trivial or infinite cyclic group.

\subsection{Graphs}

We consider graphs as combinatorial objects, following the notation of \cite{Ser77}. A \textbf{graph} is a quadruple $\Gamma=(V,E,\ol{\cdot},\iota)$ consisting of a set $V=V(\Gamma)$ of \textit{vertices}, a set $E=E(\Gamma)$ of \textit{edges}, a map $\ol{\cdot}:E\rar E$ called \textit{reverse} and a map $\iota:E\rar V$ called \textit{initial vertex}; we require that for every edge $e\in E$, we have $\ol{e}\not=e$ and $\ol{\ol{e}}=e$. For an edge $e\in E$, we denote by $\tau(e)=\iota(\ol{e})$ the \textit{terminal vertex} of $e$. A \textbf{path} in a graph $\Gamma$, with \textit{initial vertex} $v\in V(\Gamma)$ and \textit{terminal vertex} $v'\in V(\Gamma)$, is a sequence $\sigma=(e_1,\dots,e_\ell)$ of edges $e_1,\dots,e_\ell\in E(\Gamma)$ for some integer $\ell\ge0$, with the conditions $\iota(e_1)=v$ and $\tau(e_\ell)=v'$ and $\tau(e_i)=\iota(e_{i+1})$ for $i=1,\dots,\ell-1$. A path $(e_1,\dots,e_\ell)$ is \textbf{reduced} if $e_{i+1}\not=\ol{e}_i$ for $i=1,\dots,\ell-1$. A graph is \textbf{connected} if for every pair of vertices, there is a path that goes from one to the other. For a connected graph $\Gamma$, we define its \textbf{rank} $\rank{\Gamma}\in\bbN\cup\{+\infty\}$ as the rank of its fundamental group (which is a free group). A graph is a \textbf{tree} if for every pair of vertices, there is a unique reduced path going from one to the other.

\subsection{Graphs of groups}\label{sec:graphs-of-groups}

\begin{defn}[Graph of groups]\label{def:graph-of-groups}
A \textbf{graph of groups} is a quadruple
$$\cG=(\Gamma,\{G_v\}_{v\in V(\Gamma)},\{G_e\}_{e\in E(\Gamma)},\{\psi_e\}_{e\in E(\Gamma)})$$
consisting of a connected graph $\Gamma$, a group $G_v$ for each vertex $v\in V(\Gamma)$, a group $G_e$ for every edge $e\in E(\Gamma)$ with the condition $G_e=G_{\ol{e}}$, and an injective homomorphism $\psi_e:G_e\rar G_{\tau(e)}$ for every edge $e\in E(\Gamma)$.
\end{defn}

\begin{defn}[Universal group]\label{def:universal-group}
Let $\cG=(\Gamma,\{G_v\},\{G_e\},\{\psi_e\})$ be a graph of groups. Define the \textbf{universal group} $\FG{\cG}$ as the quotient of the free product $(*_{v\in V(\Gamma)}G_v)*F(E(\Gamma))$ by the relations
\[
    \ol{e}=e^{-1} \qquad\qquad \psi_{\ol{e}}(g)\cdot e=e\cdot\psi_e(g)
\]
for $e\in E(\Gamma)$ and $g\in G_e$.
\end{defn}

A \textbf{sound writing} for an element $x\in\FG{\cG}$ is given by a path $(e_1,\dots,e_\ell)$ in $\Gamma$ and by elements $g_i\in G_{\tau(e_i)}=G_{\iota(e_{i+1})}$ for $i=0,\dots,\ell$, such that $x=g_0e_1g_1e_2g_2\dots g_{\ell-1}e_\ell g_\ell$ in $\FG{\cG}$. An element $x\in\FG{\cG}$ is called \textbf{sound} if it admits a sound writing; in that case, the vertices $v=\iota(e_1),v'=\tau(e_\ell)\in V(\Gamma)$ do not depend on the writing (nor does the homotopy class relative to the endpoints of the path $(e_1,\dots,e_\ell)$ in $\Gamma$), and we say that $x$ is \textbf{$(v,v')$-sound}. We denote by $\pi_1(\cG,v,v')$ the set of all $(v,v')$-sound elements of $\FG{\cG}$.

Given a vertex $v_0\in V(\Gamma)$, we have a subgroup $\pi_1(\cG,v_0):=\pi_1(\cG,v_0,v_0)\sgr\FG{\cG}$ which we call \textbf{fundamental group} of $\cG$ with basepoint $v_0$; different choices of the basepoint $v_0$ produce conjugate (and in particular isomorphic) subgroups of $\FG{\cG}$. Given an element $g$ in a vertex group $G_v$, we can take a path $(e_1,\dots,e_\ell)$ from $v_0$ to $v$: we define the conjugacy class $[g]:=[e_1\dots e_\ell g\ol{e}_\ell\dots \ol{e}_1]$ in $\pi_1(\cG,v_0)$, and notice that this does not depend on the chosen path.

Let $x\in\FG{\cG}$ be sound, and suppose that we are given a sound writing $x=g_0e_1g_1e_2\dots g_{\ell-1}e_\ell g_\ell$. Suppose that for some $1\le i\le \ell-1$ we have that $e_{i+1}=\ol{e}_i$ 
and $g_i\in\im{\psi_{e_i}}$, then we can produce another sound writing for $x$, namely
\[
    x=g_0e_1g_1\dots  e_{i-1}[g_{i-1}\cdot \psi_{\ol{e}_i}(\psi_{e_i}^{-1}(g_i))\cdot g_{i+1}]e_{i+2}\dots g_{\ell-1}e_\ell g_\ell.
\]
We say that the second writing is obtained from the first by a \textbf{reduction}. A sound writing for $x$ is called \textbf{reduced} if no reduction can be performed to it. It is immediate to see that every sound element admits a reduced sound writing, which can be obtained by performing a sequence of reductions; the following lemma deals with uniqueness.

\begin{lem}[Britton's lemma \cite{Br63}]\label{lem:sound-writing}
    Let $\cG$ be a graph of groups and let $x\in\FG{\cG}$ be a sound element. Let $x=g_0e_1g_1\dots e_\ell g_\ell=g_0'e_1'g_1'\dots e_{\ell'}'g_{\ell'}'$ be two reduced sound writings. Then we have $\ell=\ell'$ and $e_i=e_i'$ for $i=1,\dots,\ell$. Moreover, there are elements $h_i\in G_{e_i}$ for $i=1,\dots,\ell$, such that $g_i'=\psi_{e_i}(h_i)\cdot g_i\cdot \psi_{\ol{e}_{i+1}}(\ol{h}_{i+1})$ for $i=0,\dots,\ell$.
\end{lem}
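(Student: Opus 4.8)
The plan is to prove the stronger normal form theorem and deduce the stated uniqueness from it, using van der Waerden's trick of constructing an explicit action of $\FG{\cG}$ on a set of normal forms. First I would fix, for every edge $e$, a right transversal $R_e\subseteq G_{\tau(e)}$ of the cosets $\psi_e(G_e)\backslash G_{\tau(e)}$, with $1\in R_e$ representing the trivial coset. Call a sound writing $(g_0,e_1,\dots,e_\ell,g_\ell)$ \emph{normal} if $g_i\in R_{e_i}$ for $i=1,\dots,\ell$ and, whenever $e_{i+1}=\ol e_i$, we have $g_i\neq1$; the initial factor $g_0$ is left unconstrained. Every reduced sound writing can be put in normal form by repeatedly using the relation $e\,\psi_e(h)=\psi_{\ol e}(h)\,e$ to push the $\psi_{e_i}(G_{e_i})$-part of each $g_i$ leftwards across $e_i$, which is precisely the kind of modification by an element $h_i\in G_{e_i}$ appearing in the statement.

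Next I would let $\Omega$ denote the set of normal forms and define a partial left action of the generators of $\FG{\cG}$ on $\Omega$: a vertex element $a\in G_w$ acts on a form with initial vertex $w$ by $g_0\mapsto a g_0$ (and is undefined otherwise), and an edge $e$ acts on a form with initial vertex $\tau(e)$ by prepending $e$ and then re-normalizing, that is, decompose $g_0=\psi_e(h)\,r$ with $r\in R_e$, rewrite $e g_0=\psi_{\ol e}(h)\,e\,r$, and if this creates a cancellation (the first edge of the original form is $\ol e$ and $r=1$) perform the resulting reduction and recurse. The core of the argument is to check that these partial actions respect the two defining relations of $\FG{\cG}$, namely $\ol e=e^{-1}$ and $\psi_{\ol e}(g)\,e=e\,\psi_e(g)$. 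Granting this, one obtains a well-defined action of the fundamental groupoid, and applying a word representing $x$ to the trivial form based at the initial vertex of $x$ recovers a normal form of $x$ that depends only on $x$; hence $\ell$, the edge sequence $e_1,\dots,e_\ell$, and the representatives $g_i\in R_{e_i}$ are invariants of $x$.

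Finally I would deduce the lemma. Given two reduced sound writings of $x$, each reduces to the \emph{same} normal form by the leftward-pushing procedure above; comparing a reduced writing with its normal form shows that the two differ exactly by replacing each $g_i$ with $\psi_{e_i}(h_i)\cdot g_i\cdot\psi_{\ol e_{i+1}}(\ol h_{i+1})$ for suitable $h_i\in G_{e_i}$, namely the element pushed out to the left across $e_i$ and the one absorbed from the right across $e_{i+1}$, which is the claimed relation; in particular $\ell=\ell'$ and $e_i=e_i'$.

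The main obstacle is the verification of well-definedness of the action: one must check that prepending an edge and re-normalizing is consistent with the relation $\psi_{\ol e}(g)\,e=e\,\psi_e(g)$ and, crucially, that the recursive collapse triggered when a reduction is created terminates and yields the same result independently of the order of operations. I would handle this by induction on $\ell$, so that each relation-check reduces to a strictly shorter form. As an alternative route, the same conclusion follows geometrically from the Bass-Serre correspondence: a reduced sound writing labels a reduced edge-path in the Bass-Serre tree $T$ of $\cG$, which is therefore a geodesic and has a unique edge-sequence, forcing $\ell=\ell'$ and $e_i=e_i'$, while the remaining freedom in each $g_i$ is exactly the stabilizer $G_{e_i}$ of the corresponding edge of $T$.
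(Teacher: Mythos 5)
Your argument is correct: the van der Waerden normal-form construction (choosing right transversals $R_e$ of $\psi_e(G_e)\backslash G_{\tau(e)}$, letting the fundamental groupoid act on normal forms, and reading off the $h_i$ as the elements pushed across each $e_i$ during normalization) is precisely the standard proof of Britton's lemma. The paper does not prove the statement itself but defers to Section IV.2 of Lyndon--Schupp, which is exactly the argument you reconstruct, so your proposal matches the paper's intended proof.
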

\begin{proof}
    See Section IV.2 of \cite{LS77}.
\end{proof}

\subsection{Bass-Serre correspondence}

An action of a group $G$ on a tree $T$ is called \textbf{without inversions} if $ge\not=\ol{e}$ of any $g\in G$ and $e\in E(T)$. If $G$ acts on a tree $T$, then we can take the first barycentric subdivision $T^{(1)}$ of $T$, and the action of $G$ on $T^{(1)}$ is without inversions; thus it is not restrictive to limit ourselves to actions without inversions. \textit{From now on, every action of a group $G$ on a tree $T$ is assumed to be without inversions.} Given an action of a group $G$ on a tree $T$, we denote by $G_{\ot v}\sgr G$ the stabilizer of a vertex $\ot{v}\in V(T)$ and with $G_{\ot e}\sgr G$ the stabilizer of an edge $\ot{e}\in E(T)$.

Let $G$ be a group acting on a tree $T$ and let $\Gamma$ be the quotient graph. We make the following choices: for every vertex $v\in V(\Gamma)$ we choose a lifting $\ot v\in V(T)$; for every edge $e\in E(\Gamma)$ we choose a lifting $\ot e\in E(T)$; for every edge $e\in E(\Gamma)$ with $\tau(e)=v$ we choose an element $a_e\in G$ such that $\tau(a_e\cdot\ot e)=\ot v$. We define the graph of groups $\cG$ with underlying graph $\Gamma$; at each vertex $v\in V(\Gamma)$ we put the stabilizer $G_{\ot v}$ of the chosen lifting; at every edge $e\in E(\Gamma)$ we put the stabilizer $G_{\ot e}$ of the chosen lifting; for every edge $e\in E(\Gamma)$ we define the map $\psi_e:G_{\ot e}\rar G_{\ot v}$ given by $x\mapsto a_exa_e^{-1}$. In addition, we define the homomorphism $\Phi:\FG{\cG}\rightarrow G$, consisting of the natural inclusions $G_{\ot v}\rightarrow G$ for $v\in V(\Gamma)$, and given by $\Phi(e)=a_{\ol e}^{-1}a_e$ for $e\in E(\Gamma)$.

If $\cG=(\Gamma,\{G_v\},\{G_e\},\{\psi_e\})$ and $\cG'=(\Gamma,\{G_v'\},\{G_e'\},\{\psi_e'\})$ are graphs of groups obtained from the same action on a tree, but performing different choices, then there are isomorphisms $G_v\rar G_v'$ and $G_e\rar G_e'$ such that all the diagrams with the maps $\psi_e,\psi_e',\Phi,\Phi'$ commute (up to conjugation). Thus, the graph of groups $\cG$ and the homomorphism $\Phi$ are essentially unique (up to changing the maps $\psi_e:G_e\rar G_{\tau(e)}$ by post-composing with conjugation in the vertex group $G_{\tau(e)}$; the conjugating element is allowed to depend on the chosen edge $e$).

\begin{prop}[Bass-Serre]\label{prop:Bass-Serre}
    For a group $G$, we have the following:
    \begin{itemize}
        \item For every action of $G$ on a tree $T$, the corresponding homomorphism $\Phi:\FG{\cG}\rightarrow G$ induces an isomorphism $\Phi:\pi_1(\cG)\rar G$ on the fundamental group (for all basepoints).
        \item Suppose that we are given a graph of groups $\cG'$ and a homomorphism $\Phi':\FG{\cG'}\rar G$ inducing an isomorphism $\Phi':\pi_1(\cG')\rar G$ on the fundamental group. Then there is an action of $G$ on a tree $T$, inducing a graph of groups $\cG$ and a homomorphism $\Phi:\FG{\cG}\rar G$, such that $\cG$ is isomorphic to $\cG'$ (the isomorphism commuting with the maps $\Phi,\Phi'$). The action is unique up to $G$-equivariant isomorphism.
    \end{itemize}
\end{prop}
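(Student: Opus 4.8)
The plan is to derive both bullets from a single dictionary between reduced sound writings in $\FG{\cG}$ and reduced edge-paths in the tree $T$. Fix the basepoint $v_0$ with lift $\ot{v}_0$. Surjectivity of $\Phi\colon\pi_1(\cG,v_0)\rar G$ is the easy half: given $g\in G$, connectivity of $T$ provides an edge-path from $\ot{v}_0$ to $g\cdot\ot{v}_0$, and since both endpoints lie in the same $G$-orbit this path projects to a loop at $v_0$ in $\Gamma$. Recording the stabilizer elements met at the visited vertices and inserting the chosen transport elements $a_e$ at each edge-crossing produces a sound element $x\in\pi_1(\cG,v_0)$ with $\Phi(x)=g$.

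For injectivity I would track the image under $\Phi$ of a reduced sound writing $x=g_0e_1g_1\cdots e_\ell g_\ell$. Using $\Phi(e)=a_{\ol{e}}^{-1}a_e$ together with $\psi_e(\cdot)=a_e(\cdot)a_e^{-1}$, the element $\Phi(x)$ traces an edge-path in $T$ starting at $\ot{v}_0$ and crossing lifts of $e_1,\dots,e_\ell$ in order. The key point is that this path is reduced precisely when the writing is reduced: a backtrack $e_{i+1}=\ol{e}_i$ at which the intervening element lies in $\im{\psi_{e_i}}$ is exactly the configuration excluded by reducedness in \Cref{lem:sound-writing}. Since $T$ is a tree it admits no nonconstant reduced closed path, so $\Phi(x)=1$ forces $\ell=0$; then $x=g_0\in G_{\ot{v}_0}$, which includes into $G$, and $\Phi(x)=g_0=1$ gives $x=1$.

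For the converse bullet I would reconstruct the tree directly from $\cG'$. Identifying $G$ with $\pi_1(\cG',v_0)$ through $\Phi'$ and transporting each vertex and edge group to a subgroup of $G$ along a fixed path to $v_0$, I set $V(T)=\bigsqcup_v G/H_v$ and $E(T)=\bigsqcup_e G/H_e$, where $H_v,H_e\sgr G$ are these transported copies, with $G$ acting by left translation and incidence maps induced by the inclusions $\psi_e$. Connectivity of $T$ follows from connectivity of $\Gamma$ together with the surjectivity argument above, and $T$ is a tree because a nonconstant reduced closed path would, running the dictionary of the previous paragraph in reverse, yield a nontrivial reduced sound element of $\pi_1(\cG',v_0)$ mapping to $1$, contradicting injectivity of $\Phi'$. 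One then checks that the quotient graph is $\Gamma$ and that the chosen lifts recover the vertex and edge groups, so the graph of groups induced by this action is $\cG'$. Uniqueness up to $G$-equivariant isomorphism is obtained by matching orbit representatives of two such trees and observing that the stabilizer identifications force the resulting bijection to be a well-defined graph isomorphism commuting with the actions. I expect the main obstacle to be the bookkeeping inside the dictionary, namely the precise verification that Britton reducedness corresponds to geodesic non-backtracking in $T$; this is the one technical fact powering both injectivity in the first bullet and the tree property in the second.
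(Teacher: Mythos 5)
The paper does not actually prove this proposition --- it simply cites Serre --- and your sketch is precisely the standard argument from that reference: the dictionary between Britton-reduced sound writings and reduced edge-paths in the tree gives injectivity and surjectivity in the first bullet, and the coset construction of the tree gives the second. Your outline is correct, and you have rightly isolated the one genuinely technical step (that reducedness of the writing corresponds to non-backtracking of the lifted path, which also powers the verification that the constructed coset graph is a tree); that verification is exactly the content one would look up in Serre or in the proof of Britton's lemma in Lyndon--Schupp.
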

\begin{proof}
    See \cite{Ser77}.
\end{proof}

An action of a group $G$ on a tree $T$ will be called a \textbf{splitting} of $G$. In virtue of the Bass-Serre correspondence of Proposition \ref{prop:Bass-Serre}, we will equivalently say that a \textbf{splitting} of $G$ is a graph of groups $\cG$ together with a homomorphism $\Phi:\FG{\cG}\rar G$ inducing an isomorphism $\Phi:\pi_1{\cG}\rar G$ on the fundamental group.

\subsection{Domination and deformation spaces}\label{sec:deformation-spaces}

Let $G$ be a group acting on a tree $T$. We say that a subgroup $H\sgr G$ is \textbf{elliptic in $T$} if the induced action of $H$ on $T$ has a global fixed point; this means that $H$ is contained in the stabilizer $G_v$ for some vertex $v\in V(T)$. Notice that a subgroup $H$ is elliptic in $T$ if and only if all its conjugates are elliptic in $T$.

Let $G$ be a group acting on two trees $T,T'$. We say that $T$ \textbf{dominates} $T'$ if every vertex stabilizer of $T$ is elliptic in $T'$; this is equivalent to requiring that there is a $G$-equivariant map $f:T\rar T'$ sending each vertex to a vertex and each edge to a (possibly trivial) edge path.

Let $G$ be a group acting on a tree $T$. The \textbf{deformation space} $\fD_G(T)$ is the set of actions of $G$ on a tree $T'$ such that $T$ dominates $T'$ and $T'$ dominates $T$ (considered up to $G$-equivariant isomorphism). The terminology \textit{deformation space} is due to the fact that two trees lie in the same deformation space if and only if it is possible to pass from one to the other by a sequence of certain moves, called \textit{deformations}. We refer the reader to \cite{For02} and \cite{BF91} for further details.

\subsection{Universally elliptic splittings and JSJ decomposition}

Let $\cA$ be a family of subgroups of $G$ which is closed under taking subgroups. An \textbf{$\cA$-splitting} is an action of $G$ on a tree $T$ such that all edge stabilizers belong to $\cA$. Let $T$ be an $\cA$-splitting: we say that an edge stabilizer $G_e$, for $e\in E(T)$, is \textbf{$\cA$-universally elliptic} if it is elliptic in every $\cA$-splitting; we say that $T$ is \textbf{$\cA$-universally elliptic} if all its edge stabilizers are $\cA$-universally elliptic. Roughly speaking, this means that the splitting $T$ is ``canonical", i.e. it is in some sense compatible with every other $\cA$-splitting $T'$.

\begin{defn}[JSJ decomposition]
    A \textbf{JSJ decomposition} for $G$ over $\cA$ is an $\cA$-universally elliptic tree $T$ which is maximal by domination {\rm(}i.e. if $T'$ is $\cA$-universally elliptic and $T'$ dominates $T$, then $T$ dominates $T'${\rm)}.
\end{defn}

\begin{thm}[Existence of JSJ]\label{thm:JSJ-existence}
    Let $G$ be a finitely presented group. Let $\cA$ be a family of subgroups of $G$ which is closed under taking subgroups. Then $G$ has a JSJ decomposition over $\cA$. 
\end{thm}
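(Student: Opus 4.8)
The plan is to prove the existence of a JSJ decomposition over a family $\cA$ of subgroups, for a finitely presented group $G$. This is a classical result (due to Guirardel--Levitt, building on Rips--Sela, Dunwoody--Sageev, Fujiwara--Papasoglu, etc.), so the strategy is to follow the accessibility-and-domination argument and reduce the claim to two finiteness/compactness inputs that make sense for finitely presented groups.

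\medskip

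First I would fix the setting: we work inside the poset of $\cA$-universally elliptic $\cA$-trees, preordered by domination, where $T \succeq T'$ means $T$ dominates $T'$. A JSJ decomposition is by definition a maximal element of this preorder (equivalently, a maximal deformation space of universally elliptic trees). So the entire task is an existence-of-a-maximum statement, and the natural route is: (i) show the preorder is non-empty, and (ii) show that any domination-increasing chain stabilizes, so a maximal element exists. Step (i) is immediate because the trivial (one-vertex) splitting is vacuously $\cA$-universally elliptic, giving a bottom element to start from.

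\medskip

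The heart of the proof is step (ii), an accessibility argument, and this is where I expect the main obstacle to lie. The idea is to associate to each $\cA$-universally elliptic tree $T$ a complexity and to argue that one cannot dominate strictly upward infinitely often. Concretely, I would take a finite presentation of $G$, which yields a bound on how finely $G$ can be split: by Dunwoody's accessibility (and its refinements by Bestvina--Feighn for finitely presented groups), there is a uniform bound on the number of edges in a reduced $\cA$-splitting, or more precisely the process of refining by universally elliptic splittings must terminate. The key technical device of Guirardel--Levitt is to build the JSJ tree as a common refinement: one considers the collection of all $\cA$-universally elliptic splittings and shows that finitely many of them already ``carry'' all the universally elliptic information, then forms their refinement (a blow-up/pullback of trees) which is again universally elliptic and dominates all of them. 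Finite presentability is exactly what guarantees, via accessibility, that this refinement has bounded complexity and hence that the increasing chain of refinements stabilizes at a maximal-by-domination tree. I would then verify that the resulting tree is itself $\cA$-universally elliptic --- this uses that a refinement of universally elliptic trees along universally elliptic edges remains universally elliptic --- and that it is maximal by domination by construction, since any universally elliptic $T'$ dominating it would contribute a further universally elliptic splitting already absorbed into the refinement.

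\medskip

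The delicate point, and the one I would flag as the main obstacle, is controlling the common refinement: a priori there are infinitely many universally elliptic splittings, so one must show that passing to refinements terminates rather than producing ever-finer trees. This is precisely the content of accessibility for finitely presented groups, so I would invoke the Bestvina--Feighn / Dunwoody accessibility bound (or cite it as the input allowed from the literature, since the theorem is attributed there) rather than reprove it. With that bound in hand, the chain condition in step (ii) follows, a maximal element exists, and it is the desired JSJ decomposition. Since the paper explicitly refers the reader to Guirardel--Levitt \cite{GL17} for the in-depth treatment and states that this preliminary section is expository, I expect the author's proof to simply cite \cite{GL17} (and possibly \cite{BF91}); my reconstruction above is the skeleton of that cited argument.
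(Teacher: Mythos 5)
Your proposal matches the paper exactly: the paper's proof consists solely of the citation ``This is Theorem 2.16 in \cite{GL17}'', followed by a remark that the argument rests on Dunwoody's accessibility \cite{Dun85}, which is precisely what you anticipated. Your sketch of the underlying refinement-and-accessibility argument is a faithful outline of the cited proof, so there is nothing to correct.
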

\begin{proof}
    This is Theorem 2.16 in \cite{GL17}. 
\end{proof}

We point out that the proof of Theorem \ref{thm:JSJ-existence} is based on Dunwoody's accessibility (see \cite{Dun85}). Theorem \ref{thm:JSJ-existence} becomes false if we only require $G$ to be finitely generated (see \cite{Dun93}). It is immediate from the definition that, if a JSJ decomposition exists, then the set of all JSJ decompositions is a deformation space.

\subsection{Deformation spaces and outer automorphisms}

Let $G$ be a group acting on a tree $T$ and let $\theta:G\rar G$ be an automorphism. Then we can define an action of $G$ on a tree $\theta(T)$ as follows: we set $\theta(T)=T$ as trees, and for $t\in T$ we define the action $g\cdot_{\theta(T)}t=\theta^{-1}(g)\cdot_Tt$. With this definition, we have $(\theta'\circ\theta)(T)=\theta'(\theta(T))$, i.e. this is an action of $\aut{G}$ on the family of all actions of $G$ on trees. If $\theta$ is an inner automorphism, then $\theta(T)$ is isomorphic to $T$; this means that if we consider trees up to $G$-equivariant isomorphism, we obtain a well-defined action of $\out{G}$.

If $\cA$ is a family of subgroups of $G$ which is invariant under automorphisms of $G$, and if $G$ has a JSJ decomposition $J$ over $\cA$, then we obtain an action of $\out{G}$ on the deformation space $\fD_G(J)$. This is true for example when taking families such as $\cA=\{1\}$ or $\cA=\{\hbox{finite subgroups}\}$ or $\cA=\{\hbox{virtually cyclic subgroups}\}$. We will be interested in torsion-free one-ended groups, with the family $\cA=\{\hbox{infinite cyclic subgroups}\}$.

\subsection{Generalized Baumslag-Solitar groups}

In this section, we recall the definition of a generalized Baumslag-Solitar group and the standard way of representing them.

\begin{defn}[GBS group]
A \textbf{GBS graph of groups} is a finite graph of groups such that each vertex group and each edge group is $\bbZ$. A \textbf{generalized Baumslag-Solitar group} is a group $G$ isomorphic to the fundamental group of some GBS graph of groups.
\end{defn}

\begin{thm}[Forester \cite{For03}]\label{thm:GBS-JSJ}
    Let $\cG$ be a GBS graph of groups and suppose that $\pi_1(\cG)\not\cong\bbZ,\bbZ^2,K$ {\rm(}the Klein bottle group{\rm)}. Then $\cG$ is a JSJ decomposition for $\pi_1(\cG)$ over the family of its cyclic subgroups.
\end{thm}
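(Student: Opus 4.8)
The plan is to verify the two defining properties of a JSJ decomposition directly for the Bass--Serre tree $T$ of $\cG$ over the family of cyclic subgroups: that $T$ is universally elliptic, and that it is maximal by domination. The engine behind both is a commensurability analysis of the vertex and edge groups of $\cG$.

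First I would establish the structural lemma that, in any GBS group, all vertex groups and edge groups (and all of their conjugates) lie in a single commensurability class, and that this class is commensurated by the whole of $G$; that is, $\mathrm{Comm}_G(G_v) = G$ for every vertex $v$. For a single edge this is immediate from \Cref{def:graph-of-groups}: if the edge group $\gen c$ maps to $a^p$ in one endpoint group $\gen a$ and to $b^q$ in the other endpoint group $\gen b$, then in $G$ one has $\gen{a^p} = \gen{c} = \gen{b^q}$, a finite-index subgroup of each, so $\gen a$ and $\gen b$ are commensurable; connectivity of $\Gamma$ then propagates this to all vertices. The stable letters attached to loop-edges likewise commensurate their incident cyclic groups, and since $G$ is generated by the vertex groups together with these letters, every generator lies in $\mathrm{Comm}_G(G_v)$, giving $\mathrm{Comm}_G(G_v) = G$.

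Next I would prove universal ellipticity. Let $C = \gen c$ be an edge group of $\cG$ and suppose, for contradiction, that $C$ acts hyperbolically on the tree $S$ of some cyclic splitting, with axis $L$. Since $gCg^{-1}$ is commensurable with $C$ for every $g \in \mathrm{Comm}_G(C) = G$, and commensurable hyperbolic cyclic subgroups share a common axis, every element of $G$ preserves $L$; thus $G = \mathrm{Stab}_G(L)$. Restricting the action to $L \cong \mathbb{R}$ gives a homomorphism $G \to \mathrm{Isom}(L)$ whose image is a subgroup of $D_\infty$ and whose kernel is the pointwise stabilizer $\Fix_G(L) = \bigcap_{e \subset L} G_e$, an intersection of cyclic edge groups and hence cyclic. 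Therefore $G$ is an extension of a cyclic group by a subgroup of $D_\infty$, hence virtually $\bbZ^2$ (or virtually cyclic). A torsion-free virtually-$\bbZ^2$ GBS group is isomorphic to $\bbZ^2$ or $K$, and a torsion-free virtually cyclic one to $\bbZ$; in every case this contradicts the hypothesis $\pi_1(\cG) \not\cong \bbZ, \bbZ^2, K$. Hence each edge group is elliptic in every cyclic splitting, so $T$ is universally elliptic.

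Finally, for maximality by domination I would upgrade ellipticity from edge groups to vertex groups. Ellipticity of an infinite cyclic subgroup in a tree is invariant under commensurability (a cyclic group and a finite-index subgroup are simultaneously elliptic or hyperbolic, since powers of a hyperbolic element are hyperbolic), and each $G_v$ is commensurable with an edge group by the structural lemma, so every $G_v$ is universally elliptic as well. Consequently every vertex stabilizer of $T$ is elliptic in every cyclic splitting $T'$, which is to say that $T$ dominates every cyclic splitting; in particular $T$ dominates every universally elliptic tree that dominates it, so $T$ is maximal. Being universally elliptic and maximal by domination, $T$ is a JSJ decomposition. The step I expect to be the main obstacle is the endgame of universal ellipticity: one must argue cleanly that $\mathrm{Stab}_G(L)$, assembled from a cyclic pointwise stabilizer extended by a subgroup of $D_\infty$, is virtually $\bbZ^2$, and then invoke the classification of torsion-free virtually-$\bbZ^2$ GBS groups as exactly $\bbZ^2$ and $K$ — this is precisely where the three excluded exceptional groups (together with $\bbZ$, whose cyclic splittings are degenerate) enter.
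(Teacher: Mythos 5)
Your argument is correct, but be aware that the paper does not prove this statement at all: it is imported as a black box from Forester \cite{For03} (see also Section 3.5 of \cite{GL17}), so there is no in-paper proof to compare against. What you have written is essentially a reconstruction of the standard argument underlying those references: all vertex and edge stabilizers of a GBS tree form a single commensurability class of infinite cyclic subgroups commensurated by the whole group; a commensurated infinite cyclic subgroup acting hyperbolically on a tree forces the group to preserve the axis, hence to be cyclic-by-(subgroup of $D_\infty$) and therefore virtually cyclic or virtually $\bbZ^2$; and ellipticity of infinite cyclic subgroups is a commensurability invariant, which upgrades universal ellipticity from edge groups to vertex groups and yields domination of every cyclic splitting, hence maximality. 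Two points deserve care if you want this to be fully rigorous. First, the identity $\gen{a^p}=\gen{c}=\gen{b^q}$ holds literally for the stabilizers of an edge of the Bass--Serre tree and of its two endpoints; for the graph-of-groups data the two edge inclusions differ by conjugation by the stable letter, so the clean way to run the structural lemma is on tree stabilizers, where transitivity of commensurability along edge-paths and $gG_{\ot v}g^{-1}=G_{g\ot v}$ give $\mathrm{Comm}_G(G_{\ot v})=G$ at once. Second, the endgame genuinely rests on two classification facts external to the GBS setting --- a torsion-free virtually cyclic group is trivial or $\bbZ$, and a torsion-free virtually $\bbZ^2$ group is $\bbZ^2$ or $K$ (the two-dimensional Bieberbach classification, or the classification of surface groups that are virtually $\bbZ^2$). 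You correctly flag this as the place where the three excluded groups enter; these inputs are classical and safe to invoke, but they are real theorems, not formalities, and should be cited rather than asserted.
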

\begin{proof}
    See \cite{For03} (and also Section 3.5 in \cite{GL17}).
\end{proof}

We next recall the representation of generalized Baumslag-Solitar groups in terms of labeled graphs.

\begin{defn}[GBS graph]
A \textbf{GBS graph} is a pair $(\Gamma,\psi)$ where $\Gamma$ is a finite graph and $\psi:E(\Gamma)\rar\bbZ\setminus\{0\}$ is a function.
\end{defn}

Given a GBS graph of groups $\cG=(\Gamma,\{G_v\}_{v\in V(\Gamma)},\{G_e\}_{e\in E(\Gamma)},\{\psi_e\}_{e\in E(\Gamma)})$, the map $\psi_e:G_e\rar G_{\tau(e)}$ is an injective homomorphism $\psi_e:\bbZ\rar\bbZ$, and thus coincides with multiplication by a unique non-zero integer $\psi(e)\in\bbZ\setminus\{0\}$. We define the GBS graph associated to $\cG$ as $(\Gamma,\psi)$ associating to each edge $e$ the factor $\psi(e)$ characterizing the homomorphism $\psi_e$, see Figure \ref{fig:GBS-graph}. Giving a GBS graph of groups is equivalent to giving the corresponding GBS graph. In fact, the numbers on the edges are enough to reconstruct the injective homomorphisms and thus the graph of groups.

\begin{figure}[H]
\centering
\includegraphics[width=0.5\textwidth]{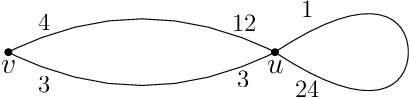}
\caption{In the figure we can see a GBS graph $(\Gamma,\psi)$ with two vertices $v,u$ and three edges $e_1,e_2,e_3$ (and their reverses). The edge $e_1$ goes from $v$ to $u$ and has $\psi(\ol{e}_1)=4$ and $\psi(e_1)=12$. The edge $e_2$ goes from $v$ to $u$ and has $\psi(\ol{e}_2)=\psi(\ol{e}_2)=3$. The edge $e_3$ goes from $u$ to $u$ and has $\psi(\ol{e}_3)=1$ and $\psi(e_3)=24$.}
\label{fig:GBS-graph}
\end{figure}

Let $\cG$ be a GBS graph of groups and let $(\Gamma,\psi)$ be the corresponding GBS graph. The universal group $\FG{\cG}$ has a presentation with generators $V(\Gamma)\cup E(\Gamma)$, the generator $v\in V(\Gamma)$ representing the element $1$ in $\bbZ=G_v$. The relations are given by $\ol{e}=e^{-1}$ and $u^{\psi(\ol{e})}e=ev^{\psi(e)}$ for every edge $e\in E(\Gamma)$ with $\iota(e)=u$ and $\tau(e)=v$.

\section{Centralizers in Generalized Baumslag-Solitar groups}\label{sec:GBS-centralizers}

In this section, we describe centralizers of elements in GBS groups. In Section \ref{sec:auto-centralizers}, we use centralizers of elliptic elements to define (relative) outer automorphisms of the fundamental group of a graph of groups with cyclic edge groups, which in a way are a generalization of Dehn twists along the edges of the splitting in the hyperbolic case.

In order to describe centralizers of elliptic elements, given a GBS graph, we define another (possibly infinite) (possibly disconnected) GBS graph, called the development (\Cref{def:GBS-development}). We show that the centralizer of an elliptic element is the fundamental group of a (possibly infinite) connected component of the development, see \Cref{prop:GBS-centralizer}.

\begin{defn}\label{def:centralizer-element}
    Let $\cG=(\Gamma,\{G_v\},\{G_e\},\{\psi_e\})$ be a graph of groups. For $v\in V(\Gamma)$ and $g\in G_v$, we denote by
    $$C(g)=\{c\in\pi_1(\cG,v) : cg=g c\}\sgr\pi_1(\cG,v)$$
    the centralizer of the element $g$.
\end{defn}

\begin{remark}
    Note that this is the centralizer in \textit{the whole fundamental group} of the graph of groups (and not just the centralizer inside $G_v$ itself). Also, centralizers of different elements are subgroups of the fundamental group at different basepoints.
\end{remark}

We are interested in giving a description of such centralizers in a GBS. In \cite{Dud18a}, the author describes centralizers of elements in a GBS when the corresponding GBS graph $\Gamma$ is a tree, and in \cite{Dud18b} the centralizers of elements in unimodular GBSs.

As we shall now see, the description of the centralizers of elliptic elements in a GBS is given in terms of infinite GBS, and for simplicity it is also convenient to allow them to be disconnected; for this reason, we will abuse the notation a little bit. Define an \textit{infinite disconnected GBS graph} as a pair $(\Gamma,\psi)$ where $\Gamma$ is a (possibly infinite, possibly disconnected) graph and $\psi:V(\Gamma)\rar\bbZ\setminus\{0\}$ is a function. Define an \textit{infinite disconnected GBS graph of groups} as a (possibly infinite) graph of groups with vertex groups and edge groups infinite cyclic, but allowing also the underlying graph to be disconnected. As for standard GBSs, there is a correspondence between (i) infinite disconnected GBS graphs, up to label-preserving isomorphism and (ii) infinite disconnected GBS graphs of groups with marked generators for the vertex groups and edge groups, up to isomorphism preserving the marked generators.

\begin{defn}\label{def:GBS-development}
    For a GBS graph $(\Gamma,\psi)$, define its \textbf{development} as the infinite disconnected GBS graph $(\ot\Gamma,\ot\psi)$ given by the following data {\rm(}see also {\rm Figure \ref{fig:development})}:
    \begin{enumerate}
        \item We define $V(\ot\Gamma)=V(\Gamma)\times(\bbZ\setminus\{0\})=\{(v,n) : v\in V(\Gamma)$ and $n\not=0$ integer$\}$.
        \item We define $E(\ot\Gamma)=E(\Gamma)\times(\bbZ\setminus\{0\})=\{(e,n) : e\in E(\Gamma)$ and $n\not=0$ integer$\}$.
        \item We define the reverse map $\ol{(e,n)}=(\ol{e},n)$.
        \item We define the endpoints $\tau_{\ot\Gamma}((e,n))=(\tau_\Gamma(e),n\cdot\psi(e))$ and $\iota_{\ot\Gamma}((e,n))=(\iota_\Gamma(e),n\cdot\psi(\ol{e}))$.
        \item We define the map $\ot\psi:E(\ot\Gamma)\rar\bbZ\setminus\{0\}$ given by $\ot\psi((e,n))=\psi(e)$.
    \end{enumerate}
\end{defn}

\begin{figure}[H]\label{fig:development}
    \centering
    \includegraphics[width=0.8 \textwidth]{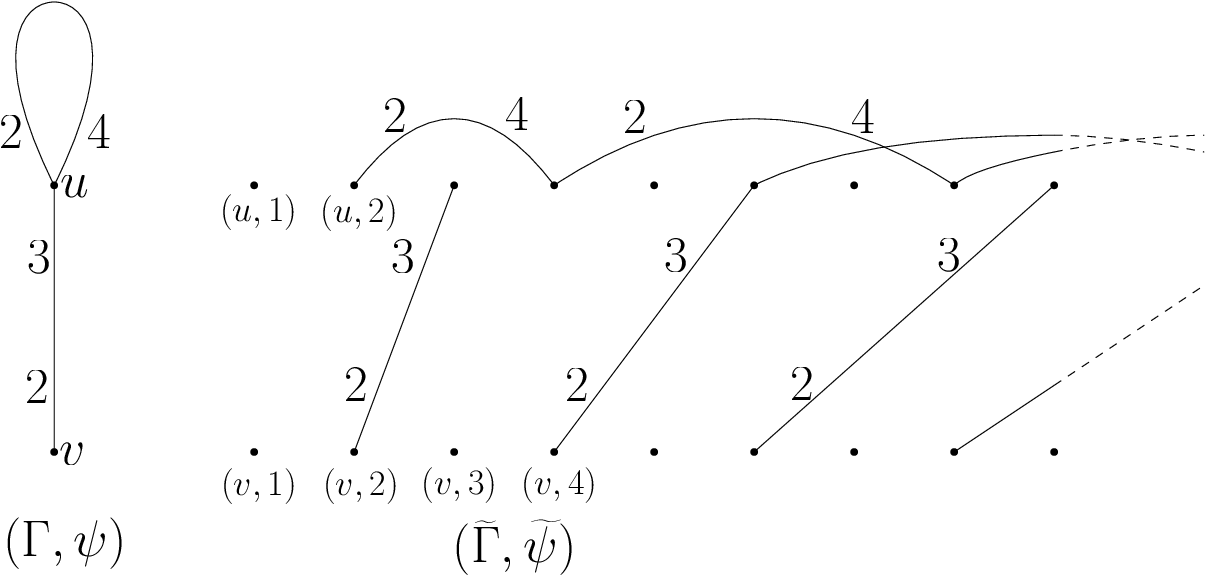}
    \caption{On the left a GBS graph $(\Gamma,\psi)$ with two vertices $v,u$. On the right its infinite disconnected development $(\ot\Gamma,\ot\psi)$ with vertices $(v,1),(v,2),\dots,(u,1),(u,2),\dots$ (vertices with negative coefficient have been omitted from the picture). For example, if $z\in G_u$ is the generator at $u\in V(\Gamma)$, then we can see that $C(z)\cong\bbZ$ (since the connected component of $(u,1)$ is trivial), $C(z^2)$ is an infinite GBS (since the connected component of $(u,2)$ is infinite), $C(z^3)\cong\pres{x,y}{x^3=y^2}$ (since the connected component if $(u,3)$ is a finite GBS), $C(z^4)\cong C(z^2)$ (since the connected component of $(u,4)$ coincides with the connected component of $(u,2)$, and in fact $z^4$ is conjugate to $z^2$).}
\end{figure}

Let $(\Gamma,\psi)$ be a GBS graph with corresponding GBS graph of groups $\cG=(\Gamma,\{G_v\},\{G_e\},\{\psi_e\})$. Let $(\ot\Gamma,\ot\psi)$ be its development, with corresponding infinite disconnected GBS graph of groups $\ot\cG=(\ot\Gamma,\{\ot G_{(v,n)}\},\{\ot G_{(e,n)}\},\{\ot\psi_{(e,n)}\})$. We can define the \textit{universal group} $\FG{\ot\cG}$ in the same way as in Definition \ref{def:universal-group}. For a vertex $(v,n)\in V(\ot\Gamma)$ we can define the \textit{fundamental group} $\pi_1(\ot\cG,(v,n))$ as in Section \ref{sec:graphs-of-groups}; the main difference being that vertices in distinct connected components can give non-isomorphic subgroups $\pi_1(\ot\cG,(v,n))\sgr\FG{\ot\cG}$. There is a natural map from $\FG{\ot\cG}\rar\FG{\cG}$ sending the generator of the vertex $G_{(v,n)}$ to the generator of $G_v$ for all $(v,n)\in V(\ot\Gamma)$, and sending the edge $(e,n)$ to $e$ for all $(e,n)\in E(\ot\Gamma)$; note that this is well-defined, as relations of $\FG{\ot\cG}$ are sent to relations of $\FG{\cG}$. This map sends reduced sound writings to reduced sound writings, and in particular, it induces an injective homomorphism $\pi_1(\ot\cG,(v,n))\rar\pi_1(\cG,v)$ for all vertices $(v,n)\in V(\ot\Gamma)$.

\begin{prop}[centralizers of elliptic elements]\label{prop:GBS-centralizer}
    Let $(\Gamma,\psi)$ be a GBS graph, with corresponding graph of groups $\cG$. Let $(\ot\Gamma,\ot\psi)$ be its development, with corresponding infinite disconnected graph of groups $\ot\cG$. Then, for every vertex $v\in V(\Gamma)$, and for every element $z^n\in G_v$ {\rm(}where $z$ is the generator of $G_v$ and $n\not=0$ is an integer{\rm)}, we have that $C(z^n)=\pi_1(\ot\cG,(v,n))\sgr\pi_1(\cG,v)$.
\end{prop}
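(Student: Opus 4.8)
The plan is to prove the two inclusions $\pi_1(\ot\cG,(v,n))\subseteq C(z^n)$ and $C(z^n)\subseteq\pi_1(\ot\cG,(v,n))$ separately, using the natural map $\FG{\ot\cG}\rar\FG{\cG}$ together with Britton's lemma (\Cref{lem:sound-writing}) as the main computational tool. Throughout I identify $\pi_1(\ot\cG,(v,n))$ with its image in $\pi_1(\cG,v)$ under the injective homomorphism constructed just before the statement.

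For the inclusion $\pi_1(\ot\cG,(v,n))\subseteq C(z^n)$, I would first check the key commutation fact at the level of edges: the definition of the development is engineered so that the endpoint of an edge $(e,m)$ is $(\tau(e),m\cdot\psi(e))$, which records precisely the power of the generator that is ``seen'' from that edge. Concretely, if $c\in\pi_1(\ot\cG,(v,n))$ is represented by a reduced sound writing $c=g_0e_1g_1\dots e_\ell g_\ell$, I would show by tracking the second coordinate along the path in $\ot\Gamma$ that the relation $u^{\psi(\ol e)}e=ev^{\psi(e)}$ in $\FG{\cG}$ lets the power $z^n$ at the basepoint be conjugated step by step across each edge $e_i$; the second-coordinate bookkeeping in the development guarantees that the power returns to $z^n$ when the loop closes up at $(v,n)$. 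Thus $c$ commutes with $z^n$, giving $c\in C(z^n)$. This direction is essentially a direct verification that the development is built to encode exactly the commutation constraints.

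The harder inclusion is $C(z^n)\subseteq\pi_1(\ot\cG,(v,n))$, and this is where I expect the main obstacle. Given $c\in\pi_1(\cG,v)$ with $cz^nc^{-1}=z^n$, take a reduced sound writing $c=g_0e_1g_1\dots e_\ell g_\ell$ and consider the product $cz^nc^{-1}$. The strategy is an induction on $\ell$: I would write out $cz^nc^{-1}=g_0e_1g_1\dots e_\ell g_\ell z^n \ol g_\ell\ol e_\ell\dots\ol e_1\ol g_0$ and, using that this equals the length-zero sound element $z^n$, invoke Britton's lemma to force a reduction in the middle. Since $e_{\ell+1}=\ol e_\ell$ automatically, the reduction condition requires $g_\ell z^n\in\im\psi_{e_\ell}$, equivalently $z^n\in\im\psi_{e_\ell}$ after absorbing $g_\ell$; here $z^n$ must lie in the edge group pushed into $G_{\tau(e_\ell)}$, and writing $\psi_{e_\ell}$ as multiplication by an integer shows this is exactly the divisibility condition recorded by the endpoint map of the development. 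Peeling off $e_\ell$ then replaces $z^n$ by the appropriate conjugate power dictated by the second coordinate, reduces the length, and lets the induction continue; reassembling the path one obtains precisely a sound writing in $\ot\cG$ based at $(v,n)$. The delicate point is bookkeeping the edge-group elements $h_i$ furnished by Britton's lemma and checking that the resulting exponents match the second-coordinate arithmetic $m\mapsto m\cdot\psi(e)$ exactly, so that the lifted writing lands in the correct connected component of $\ot\Gamma$ rather than merely some nearby vertex.

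I would close by remarking that the two inclusions combine to give $C(z^n)=\pi_1(\ot\cG,(v,n))$ as subgroups of $\pi_1(\cG,v)$, and that the argument simultaneously identifies which connected component of the development carries the centralizer, recovering the behaviour illustrated in \Cref{fig:development}. The only subtlety worth flagging is well-definedness of the basepoint: different choices in the Bass--Serre data change $\psi_e$ by conjugation, but since we are computing a centralizer up to the ambient conjugation this does not affect the isomorphism type of the component, so the statement is independent of those choices.
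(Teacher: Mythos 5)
Your proposal is correct and follows essentially the same route as the paper: the forward inclusion by pushing $z^n$ across each edge and checking that the second coordinate of the development records exactly the resulting exponents, and the reverse inclusion by extracting the divisibility conditions $\psi(\ol e_i)\divides nq_1\cdots q_{i-1}$ and $q_1\cdots q_\ell=1$ from Britton's lemma and lifting the writing to $\ot\cG$. The only (cosmetic) difference is that you apply Britton's lemma to $cz^n\ol c=z^n$ and peel off edges from the middle outward, whereas the paper compares the two reduced writings of $z^ng=gz^n$; both yield the same arithmetic.
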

\begin{proof}
    First take $\ot g\in \pi_1(\ot\cG,(v,n))$ with reduced sound writing
    $$\ot g=g_0(e_1,m_1)g_1(e_2,m_2)g_2\dots g_{\ell-1}(e_\ell,m_\ell)g_\ell$$
    and call $q_i=\frac{\psi(e_i)}{\psi(\ol{e}_i)} \in \bbQ\setminus\{0\}$. By definition of development, we must have that $nq_1\dots q_{i-1}=m_i\cdot\psi(\ol{e}_i)$ for $i=1,\dots,\ell$ (otherwise the writing is not sound) and $q_1\dots q_\ell=1$ (otherwise the path does not go back to the starting point). But now we look at the projection $g=g_0e_1g_1e_2g_2\dots g_{\ell-1}e_\ell g_\ell\in\pi_1(\cG,v)$ and we observe that
    \begin{gather}\notag
    \begin{split}
    z^ng=& z^ng_0e_1g_1e_2g_2\dots g_{\ell-1}e_\ell g_\ell= g_0e_1z^{nq_1}g_1e_2g_2\dots g_{\ell-1}e_\ell g_\ell= \\
    = &g_0e_1g_1e_2z^{nq_1q_2}g_2\dots g_{\ell-1}e_\ell g_\ell= \dots= g_0e_1g_1e_2g_2\dots g_{\ell-1}e_\ell z^{nq_1\dots q_\ell} g_\ell = gz^n
    \end{split}
    \end{gather}
    and thus $\pi_1(\ot\cG,(v,n))\sgr C(z^n)$.

    Conversely, take $g\in C(z^n)$ with reduced sound writing $g=g_0e_1g_1e_2g_2\dots g_{\ell-1}e_\ell g_\ell\in\pi_1(\cG,v)$, and call $q_i = \frac{\psi(e_i)}{\psi(\ol{e}_i)} \in \bbQ\setminus\{0\}$. Since $z^ng=gz^n$, by Britton's Lemma \ref{lem:sound-writing}, we have that $\psi(\ol{e}_i)\divides nq_1\dots q_{i-1}$ for $i=1,\dots,\ell$ and $ q_1\dots q_\ell = 1$. Thus we obtain a well-defined element $\ot g\in\pi_1(\ot\cG,(v,n))$ given by the reduced writing
    $$\ot g = g_0(e_1,\frac{n}{\psi(\ol{e}_1)})g_1(e_2,\frac{nq_1}{\psi(\ol{e}_2)})g_2\dots g_{\ell-1}(e_\ell,\frac{nq_1\dots q_{\ell-1}}{\psi(\ol{e}_\ell)})g_\ell$$
    and thus $C(z^n)\sgr\pi_1(\ot\cG,(v,n))$.
\end{proof}

The description of centralizers of elliptic elements in a graph of groups can be extended to the description of centralizers of hyperbolic elements. We record it in the following

\begin{prop}[centralizers of hyperbolic elements]
Let $G$ be a GBS group. Suppose that $g$ is a hyperbolic element and $g$ and $h$ commute in $G$. Then there exist a {\rm(}cyclically
reduced{\rm)} hyperbolic element $r\in G$, $w\in G$, and $b, c$ elliptic elements such that 
$$
g = \ol{w} (r^nb)w, \quad \quad h = \ol{w}(r^m c)w,
$$
where $n,m \in \mathbb Z$, $n\ne 0$, $r\in C(b)\cap C(c)$, and $[b,c]=1$.
\end{prop}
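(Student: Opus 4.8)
The plan is to work geometrically with the action of $G$ on the Bass--Serre tree $T$ of a GBS graph of groups for $G$. Since $g$ is hyperbolic it has an axis $A_g\subseteq T$, a bi-infinite line on which $g$ acts by translation. First I would pass to a cyclically reduced representative: choose $w\in G$ so that $g':=wgw^{-1}$ is cyclically reduced, which geometrically amounts to moving the axis $A':=wA_g$ into standard position. Setting $h':=whw^{-1}$, the hypothesis $[g,h]=1$ gives $[g',h']=1$; and since $h'g'h'^{-1}=g'$ translates along $h'A'=A_{h'g'h'^{-1}}=A'$ in the same direction, the element $h'$ preserves $A'$ together with its orientation. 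Thus both $g'$ and $h'$ lie in the orientation-preserving setwise stabilizer $L:=\stabilizer^{+}(A')$.

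Next I would analyze the structure of $L$. Recording the signed translation length along $A'$ defines a homomorphism $\lambda\colon L\to\bbZ$; its kernel $K=\ker\lambda$ is the pointwise stabilizer of $A'$ and consists of elliptic elements. Because $G$ acts cocompactly on $T$, translation lengths are bounded below, so $\lambda(L)$ is a subgroup $m_0\bbZ\sgr\bbZ$ which is nontrivial (as $g'\in L$ is hyperbolic), with $m_0\ge 1$. I would choose $r\in L$ realizing the minimal translation $\lambda(r)=m_0$; such an $r$ is primitive along $A'$ and hence cyclically reduced. Writing $n=\lambda(g')/m_0$ and $m=\lambda(h')/m_0$, the elements $b:=r^{-n}g'$ and $c:=r^{-m}h'$ lie in $K$, so that $g'=r^{n}b$ and $h'=r^{m}c$ with $b,c$ elliptic and $n\ne 0$ (the case $m=0$ being exactly when $h$ is elliptic).

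It remains to verify the commutation relations. Fixing any vertex $v_0$ on $A'$, the subgroup $K$ is contained in the infinite cyclic vertex group $G_{v_0}$, so $K$ is itself cyclic, hence abelian; in particular $[b,c]=1$ is immediate. The crux is to show $r\in C(b)\cap C(c)$, and since $b,c\in K$ it suffices to prove that $r$ centralizes $K$. Now $r$ normalizes $K\cong\bbZ$, so conjugation by $r$ acts on $K$ as $\pm\id$, and the whole point is to exclude inversion. I expect this sign computation to be the main obstacle: it should follow from orientation-coherence, namely that the generators $z_i$ of the vertex groups $G_{v_i}$ along $A'$ can be chosen compatibly with the chosen orientation via the edge identifications $z_i^{\psi(\ol{e}_i)}=z_{i+1}^{\psi(e_i)}$, so that the orientation-preserving shift induced by $r$ carries $K\sgr G_{v_0}$ onto $K\sgr G_{v_p}$ by the identity rather than by inversion. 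Granting this, $r$ centralizes $K$, whence $r\in C(b)\cap C(c)$. Conjugating back by $w$ then yields $g=\ol{w}(r^{n}b)w$ and $h=\ol{w}(r^{m}c)w$ with all the required properties.
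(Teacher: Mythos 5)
Your geometric setup is sound---$C(g)$ lies in the orientation-preserving stabilizer $L$ of the axis, there is an exact sequence $1\to K\to L\xrightarrow{\lambda}\bbZ$ with $K$ the pointwise stabilizer of the axis, and $K$ is elliptic and cyclic because it sits inside an infinite cyclic vertex group---but the step you yourself flag as the ``main obstacle'' is exactly where the argument breaks, and the orientation-coherence you invoke to close it is false. An orientation-preserving translation along the axis can invert the generator of $K$: in $\mathrm{BS}(2,-2)=\pres{z,e}{e^{-1}z^2e=z^{-2}}$ the element $e$ is hyperbolic, the pointwise stabilizer of its axis is $K=\gen{z^2}$, and conjugation by $e$ sends $z^2\mapsto z^{-2}$. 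Taking $g=e^2$ and $h=z^2$ (which commute), your recipe gives $m_0=1$, $r=e$, $n=2$, $b=1$, $c=z^2$, and then $r\notin C(c)$. So the element of minimal translation length in $L$ is not in general a valid choice of $r$. (The paper states this proposition without proof, so there is nothing to compare your route against; the issue is internal to your argument.)

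The repair is not simply to minimize $\lambda$ over $L^+:=C_L(K)$ instead, because $g$ or $h$ need not lie in $L^+$ (e.g.\ $g=e^2$, $h=z^2e$ in the same group, where $\gen{g,h}$ turns out to be cyclic). What actually saves the statement is the structure of $L$: it is torsion-free and is an extension of a subgroup of $\bbZ$ by $K\in\{1,\bbZ\}$, hence is $\bbZ$, $\bbZ^2$, or the Klein bottle group $\pres{k,s}{sks^{-1}=k^{-1}}$ with $k$ generating $K$ and $s$ hyperbolic. Since $\gen{g,h}\sgr L$ is abelian, either it is cyclic, in which case one takes $r$ a common root and $b=c=1$; or it is $\bbZ^2$, in which case (using that in the Klein bottle case every element outside $L^+$ has cyclic centralizer in $L$) it must lie in $L^+\cong K\oplus\gen{r}$ with $r$ hyperbolic and centralizing $K$, and one takes $b,c\in K$. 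In particular the correct $r$ depends on $h$ as well as on $g$, not only on the axis. The remaining parts of your write-up (that $h$ preserves the axis with its orientation, that $\ker\lambda$ is the elliptic pointwise stabilizer, that $K$ is cyclic so $[b,c]=1$, and the final conjugation by $w$) are correct.
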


\section{Automorphisms of graphs of groups relative to vertices} \label{sec:relautos}

In this section, we consider a group acting on a tree, and we introduce the notion of \textit{outer automorphism relative to the vertices}, or shortly \textit{relative outer automorphisms}. These are outer automorphism that act as a conjugation on each vertex group, see \Cref{def:relauto-tree}. We observe that the group of relative outer automorphisms depends only on the deformation space of the tree.

\subsection{Relative endomorphisms of a graph of groups}

Let $\cG=(\Gamma,\{G_v\},\{G_e\},\{\psi_e\})$ be a graph of groups. Recall from Section \ref{sec:graphs-of-groups} that the universal group $\FG{\cG}$ is defined as the quotient of the free product $(*_{v\in V(\Gamma)}G_v)*F(E(\Gamma))$ by the relations
\[
    \ol{e}=e^{-1} \qquad\qquad \psi_{\ol{e}}(g)\cdot e=e\cdot\psi_e(g)
\]
for $e\in E(\Gamma)$ and $g\in G_e$. We denote by $\pi_1(\cG,v,u)$ the set of $(v,u)$-sound elements of $\FG{\cG}$ and by $\pi_1(\cG,v_0)=\pi_1(\cG,v_0,v_0)$.

\begin{defn}\label{def:relendo-universalgroup}
    A \textbf{relative endomorphism} of $\cG$ is a map $\Theta:\FG{\cG}\rar\FG{\cG}$ satisfying the following conditions:
    \begin{enumerate}
        \item $\Theta$ is a homomorphism.
        \item For all $v\in V(\Gamma)$ and $g\in G_v$, we have $\Theta(g)=g$.
        \item For all $e\in E(\Gamma)$, we have $\Theta(e)\in\pi_1(\cG,\iota(e),\tau(e))$.
    \end{enumerate}
\end{defn}

Since $\Theta$ is a homomorphism, in particular we must have that $\Theta(\ol{e})=\Theta(e)^{-1}$ and $\psi_{\ol{e}}(h)\Theta(e)=\Theta(e)\psi_e(h)$ for $e\in E(\Gamma)$ and $h\in G_e$. From the definition, it follows that if $x\in\pi_1(\cG,v,u)$ then $\Theta(x)\in\pi_1(\cG,v,u)$. We observe that the composition of relative endomorphisms is a relative endomorphism.

\begin{defn}\label{def:centralizer-subgroup}
    For $v\in V(\Gamma)$, define the \textbf{centralizer}
    \[
        C(G_v):=\{a\in\pi_1(\cG,v,v) : ag=ga \text{ for all } g\in G_v\}\sgr\pi_1(\cG,v,v).
    \]
\end{defn}

\begin{remark}
    Note that this is the centralizer in \textit{the whole fundamental group} of the graph of groups (and not just the centralizer inside $G_v$ itself). Also, centralizers of different vertices are subgroups of elements at different basepoints.
\end{remark}

\begin{defn}\label{def:relendo-equivalence}
    Two relative endomorphisms $\Theta,\Theta':\FG{\cG}\rar\FG{\cG}$ are called \textbf{equivalent} if there are elements $a_v\in C(G_v)$ for $v\in V(\Gamma)$ such that $\Theta'(e)=a_{\iota(e)}\Theta(e)\ol{a}_{\tau(e)}$ for all $e\in E(\Gamma)$.
\end{defn}

This defines an equivalence relation on the set of relative endomorphisms. A relative endomorphism $\Theta$ preserves the centralizers, i.e. $\Theta(C(G_v))\sgr C(G_v)$ for all $v\in V(\Gamma)$. It follows that the composition of two relative endomorphisms is well-defined up to equivalence.

\begin{lem}\label{lem:true-inverse}
    For a relative endomorphism $\Theta:\FG{\cG}\rar\FG{\cG}$, the following are equivalent:
    \begin{enumerate}
        \item There is a relative endomorphism $\Theta'$ such that $\Theta\circ\Theta'$ and $\Theta'\circ\Theta$ are equivalent to $\id$.
        \item There is a relative endomorphism $\Theta'$ such that $\Theta\circ\Theta'$ and $\Theta'\circ\Theta$ are equal to $\id$.
    \end{enumerate}
\end{lem}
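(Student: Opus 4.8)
The implication (2)$\Rar$(1) is immediate, since equality is stronger than equivalence, so the plan is to prove (1)$\Rar$(2). The starting point is a formula that extends the equivalence relation from edges to all sound elements: if relative endomorphisms $\Theta_1,\Theta_2$ are equivalent via elements $a_v\in C(G_v)$, then $\Theta_1(x)=a_v\Theta_2(x)\ol a_u$ for every $x\in\pi_1(\cG,v,u)$. I would verify this by writing $x=g_0e_1g_1\dots e_\ell g_\ell$ in sound form and substituting $\Theta_1(e_i)=a_{\iota(e_i)}\Theta_2(e_i)\ol a_{\tau(e_i)}$: at each internal vertex $w$ the factors $\ol a_w$ and $a_w$ surround some $g\in G_w$, and since $a_w$ centralizes $G_w$ they cancel, while at the two ends one moves $a_v$ and $\ol a_u$ past $g_0$ and $g_\ell$ by the same commutation. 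This telescoping is the one genuinely new computation, and it is routine.

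Next I would extract the key structural fact. Fix witnesses $\{a_v\}$ for $\Theta\circ\Theta'\sim\id$, and recall (from the paragraph preceding the statement) that composition respects equivalence and that every relative endomorphism preserves each $C(G_v)$. Applying the extended formula to $\Theta\circ\Theta'$ and an element $c\in C(G_v)\sgr\pi_1(\cG,v,v)$ gives $(\Theta\circ\Theta')(c)=a_vc\ol a_v$; as $c$ ranges over $C(G_v)$ the right-hand side ranges over all of $C(G_v)$, because conjugation by $a_v\in C(G_v)$ is a bijection of $C(G_v)$. Hence $\Theta$ restricts to a \emph{surjection} $C(G_v)\rar C(G_v)$ for each $v$. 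I can therefore pick, for each vertex, an element $c_v\in C(G_v)$ with $\Theta(c_v)=\ol a_v$, and define $\Theta''$ to be the relative endomorphism equivalent to $\Theta'$ given by $\Theta''(e)=c_{\iota(e)}\Theta'(e)\ol c_{\tau(e)}$. Then $(\Theta\circ\Theta'')(e)=\Theta(c_{\iota(e)})a_{\iota(e)}\,e\,\ol a_{\tau(e)}\overline{\Theta(c_{\tau(e)})}=e$ for every edge $e$, since $\Theta(c_v)a_v=1$; as both maps fix the vertex groups, this upgrades to $\Theta\circ\Theta''=\id$ on all of $\FG{\cG}$.

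Finally I would promote the other composite to an equality. Since $\Theta''$ is equivalent to $\Theta'$ and composition respects equivalence, $\Theta''\circ\Theta$ is equivalent to $\Theta'\circ\Theta$, hence to $\id$. A relative endomorphism equivalent to the identity is a twist $e\mapsto f_{\iota(e)}e\ol f_{\tau(e)}$, and such twists form a group under composition (they compose by $f_v\mapsto f_vf'_v$ and invert by $f_v\mapsto\ol f_v$), so $\Theta''\circ\Theta$ is bijective; in particular $\Theta$ is injective. Right-composing $\Theta\circ\Theta''=\id$ with $\Theta$ yields $\Theta\circ(\Theta''\circ\Theta)=\Theta$, and injectivity of $\Theta$ forces $\Theta''\circ\Theta=\id$. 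The main obstacle is precisely the surjectivity of $\Theta$ on each centralizer $C(G_v)$ established in the second paragraph: this is what allows the error terms $a_v$ to be absorbed into the equivalence class of $\Theta'$, converting equivalences into equalities, after which everything reduces to the bijectivity of twists and cancellation by injectivity.
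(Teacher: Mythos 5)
Your proof is correct, but it is organized quite differently from the paper's. The paper first reduces to the special case where $\Theta$ itself is equivalent to $\id$ (the reduction being the standard observation that an element of a monoid with a left and a right inverse is invertible, applied to $\Theta\circ\Theta'$ and $\Theta'\circ\Theta$ separately), and then exhibits the exact inverse of the twist $e\mapsto a_{\iota(e)}e\ol{a}_{\tau(e)}$ as the twist by $\ol{a}_v$, using the computation $\Theta(a_v)=a_v$. You instead keep the general $\Theta$ throughout: from $(\Theta\circ\Theta')(c)=a_vc\ol{a}_v$ you extract the surjectivity of $\Theta$ on each centralizer $C(G_v)$ --- a fact the paper never isolates --- and use it to absorb the error terms $a_v$ into the equivalence class of $\Theta'$, producing $\Theta''$ with $\Theta\circ\Theta''=\id$ on the nose; the other composite is then forced by injectivity of $\Theta$, which you get from the bijectivity of twists. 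The common core of both arguments is the telescoping identity $\Theta_1(x)=a_v\Theta_2(x)\ol{a}_u$ for $x\in\pi_1(\cG,v,u)$ (the paper only needs its special case $\Theta(a_v)=a_v$), and both implicitly use that a twist of a relative endomorphism by elements $c_v\in C(G_v)$ is again a well-defined homomorphism of $\FG{\cG}$. Your route is somewhat longer but avoids the implicit monoid reduction and records the useful byproduct that such a $\Theta$ restricts to a bijection of each $C(G_v)$; the paper's route is more economical once the reduction is granted. (Minor slip: the composite of the twists by $f$ and then by $f'$ is the twist by $f'_vf_v$, not $f_vf'_v$, but this does not affect your argument, which only needs that twists are invertible.)
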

\begin{proof}
    It is enough to prove that if $\Theta$ is equivalent to the identity, then there is $\Theta'$ such that $\Theta\circ\Theta'=\Theta'\circ\Theta=\id$. Suppose that $\Theta$ is equivalent to the identity, so that there are constants $a_v\in C(G_v)$ for $v\in V(\Gamma)$ such that $\Theta(e)=a_{\iota(e)}e\ol{a}_{\tau(e)}$. For $v\in V(\Gamma)$ we write $a_v=g_0e_1g_1e_2g_2\dots g_{\ell-1}e_\ell g_\ell$ and we get
\begin{gather}\notag
\begin{split}
\Theta(a_v)= & g_0(a_{\iota(e_1)}e_1\ol{a}_{\tau(e_1)})g_1(a_{\iota(e_2)}e_2\ol{a}_{\tau(e_2)})g_2\dots g_{\ell-1}(a_{\iota(e_\ell)}e_\ell \ol{a}_{\tau(e_\ell)})g_\ell= \\
        = & a_vg_0e_1g_1e_2g_2\dots g_{\ell-1}e_\ell g_\ell \ol{a}_v=
    a_va_v\ol{a}_v=
    a_v.
\end{split}
\end{gather}
    We define $\Theta'$ given by $\Theta'(e)=\ol{a}_{\iota(e)}ea_{\tau(e)}$. In the same way as above, we have that $\Theta'(a_v)=a_v$ for all $v\in V(\Gamma)$. Now we have that $\Theta'(\Theta(e))=\Theta'(a_{\iota(e)}e\ol{a}_{\tau(e)})=a_{\iota(e)}\ol{a}_{\iota(e)}ea_{\tau(e)}\ol{a}_{\tau(e)}=e$ and similarly $\Theta(\Theta'(e))=e$, as desired.
\end{proof}

\subsection{Restriction of relative endomorphisms to the fundamental group}

Let $\cG=(\Gamma,\{G_v\},\{G_e\},\{\psi_e\})$ be a graph of groups and let $v_0\in V(\Gamma)$ be a basepoint.

\begin{defn}\label{def:relendo-fundamentalgroup}
    A \textbf{relative endomorphism} of $\pi_1(\cG,v_0)$ is a homomorphism $\theta:\pi_1(\cG,v_0)\rar\pi_1(\cG,v_0)$ such that, for all $v\in V(\Gamma)$ and $x\in\pi_1(\cG,v_0,v)$, there is $y\in\pi_1(\cG,v_0,v)$ such that $\theta(xg\ol{x})=yg\ol{y}$ for all $g\in G_v$.
\end{defn}

The element $y\in\pi_1(\cG,v_0,v)$ of the above Definition \ref{def:relendo-fundamentalgroup} is not unique in general; the other elements $y'$ satisfying the same property are exactly the elements of the coset $yC(G_v)$. Notice that the composition of relative endomorphism is a relative endomorphism.

\begin{prop}\label{prop:relendo-FG-pi1}
    For $v_0\in V(\Gamma)$, there is a bijection between the following two sets:
    \begin{enumerate}
        \item Relative endomorphisms $\Theta:\FG{\cG}\rar\FG{\cG}$ up to equivalence.
        \item Relative endomorphisms $\theta:\pi_1(\cG,v_0)\rar\pi_1(\cG,v_0)$ up to post-composing with conjugations.
    \end{enumerate}
    The bijection preserves compositions.
\end{prop}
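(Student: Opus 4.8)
The plan is to construct the bijection explicitly in both directions and then check that the two constructions are mutually inverse and compatible with composition. The key observation is that a relative endomorphism $\Theta$ of $\FG{\cG}$ is determined by its values on the edge generators $e\in E(\Gamma)$ (since it fixes every vertex group pointwise), and that $\pi_1(\cG,v_0)$ is generated by the vertex groups together with conjugated edge-words $w_e e \ol{w}_e$ obtained by connecting edges back to the basepoint $v_0$ along a fixed spanning structure.

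First I would fix, for each vertex $v\in V(\Gamma)$, a path $p_v$ from $v_0$ to $v$ in $\Gamma$ (with $p_{v_0}$ trivial), giving an element $w_v\in\pi_1(\cG,v_0,v)$; this lets me embed each vertex group $G_v$ into $\pi_1(\cG,v_0)$ via $g\mapsto w_v g\ol{w}_v$, and to realise each edge $e$ as a loop $w_{\iota(e)}\,e\,\ol{w}_{\tau(e)}\in\pi_1(\cG,v_0)$. Given a relative endomorphism $\Theta$ of $\FG{\cG}$, I would define $\theta:=$ the restriction of (conjugation-adjusted) $\Theta$ to $\pi_1(\cG,v_0)$: explicitly $\theta(x)=\Theta(x)$ for $x\in\pi_1(\cG,v_0)\subseteq\FG{\cG}$, which makes sense because $\Theta$ preserves soundness and sends $(v_0,v_0)$-sound elements to $(v_0,v_0)$-sound elements. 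That $\theta$ is a relative endomorphism of $\pi_1(\cG,v_0)$ follows from condition (2) of \Cref{def:relendo-universalgroup}: for $x=w_v$ and $g\in G_v$ one computes $\theta(xg\ol{x})=\Theta(x)g\Theta(x)^{-1}$, so one may take $y=\Theta(x)\in\pi_1(\cG,v_0,v)$. I would then check that equivalent $\Theta,\Theta'$ (differing by constants $a_v\in C(G_v)$) produce $\theta,\theta'$ differing by post-composition with the conjugation by the sound element determined by $a_{v_0}$, so the induced map on equivalence classes is well-defined.

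For the reverse direction, given a relative endomorphism $\theta$ of $\pi_1(\cG,v_0)$, I would reconstruct $\Theta$ by specifying $\Theta(e)$ for each edge. Using \Cref{def:relendo-fundamentalgroup} applied to the vertices $\iota(e),\tau(e)$, the hypothesis furnishes elements realising $\theta$ as a conjugation on each vertex group; comparing these conjugators across the edge $e$ produces a canonical candidate $\Theta(e)\in\pi_1(\cG,\iota(e),\tau(e))$, well-defined up to the ambiguity $yC(G_v)$ noted after \Cref{def:relendo-fundamentalgroup}, which is exactly the equivalence in \Cref{def:relendo-equivalence}. I would verify that these $\Theta(e)$ satisfy the compatibility relation $\psi_{\ol e}(h)\Theta(e)=\Theta(e)\psi_e(h)$ forced by $\theta$ being a well-defined homomorphism, so that $\Theta$ extends to a homomorphism on all of $\FG{\cG}$ fixing the vertex groups, i.e.\ a genuine relative endomorphism. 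Finally I would confirm that the two assignments are inverse up to the respective equivalences and that both respect composition, the latter being essentially formal once well-definedness is established.

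The main obstacle I expect is pinning down the reverse construction's well-definedness: the element $y$ in \Cref{def:relendo-fundamentalgroup} is only defined up to the coset $yC(G_v)$, so the edge-values $\Theta(e)$ are only canonical modulo the centralizer ambiguity, and I must show precisely that this indeterminacy matches the equivalence relation of \Cref{def:relendo-equivalence} (neither coarser nor finer) and that the collection of chosen conjugators can be made globally coherent across all edges simultaneously. Handling the dependence on the auxiliary path choices $w_v$, and checking that post-composing $\theta$ by an inner automorphism of $\pi_1(\cG,v_0)$ corresponds exactly to shifting the $a_v$'s, is the delicate bookkeeping at the heart of the argument; the compatibility with composition and the verification that $\Theta$ respects the edge relations are comparatively routine.
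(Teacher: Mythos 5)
Your proposal is correct and follows essentially the same route as the paper: restriction of $\Theta$ to $\pi_1(\cG,v_0)$ in one direction, and reconstruction of $\Theta(e)$ from chosen conjugators $x_v, y_v$ realising $\theta$ on the vertex groups in the other, with the coset ambiguity $yC(G_v)$ absorbed exactly by the equivalence relation of \Cref{def:relendo-equivalence}. The verifications you defer (the edge relation $\psi_{\ol e}(h)\Theta(e)=\Theta(e)\psi_e(h)$ and independence of the choices of $x_v$, $y_v$) are precisely the computations the paper carries out.
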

\begin{proof}
    Given a relative endomorphism $\Theta:\FG{\cG}\rar\FG{\cG}$ we can consider its restriction $\theta:\pi_1(\cG,v_0)\rar\pi_1(\cG,v_0)$, and this is a relative endomorphism. Changing $\Theta$ by equivalence changes $\theta$ by post-composing with a conjugation (by an element of $C(G_{v_0})$). The restriction also behaves well with respect to composition.

    Conversely, given a relative endomorphism $\theta:\pi_1(\cG,v_0)\rar\pi_1(\cG,v_0)$, we proceed as follows. For every $v\in V(\Gamma)$ we choose $x_v\in\pi_1(\cG,v_0,v)$ and we find $y_v\in\pi_1(\cG,v_0,v)$ such that $\theta(x_vg\ol{x}_v)=y_vg\ol{y}_v$ for all $g\in G_v$; we now define the relative endomorphism $\Theta:\FG{\cG}\rar\FG{\cG}$ by setting $\Theta(e)=\ol{y}_{\iota(e)}\theta(x_{\iota(e)}e\ol{x}_{\tau(e)})y_{\tau(e)}$ for $e\in E(\Gamma)$. This is a well-defined homomorphism since for $h\in G_e$ we have
    \begin{align*}
        \psi_{\ol{e}}(h)\Theta(e)=
        \ol{y}_{\iota(e)}(y_{\iota(e)}\psi_{\ol{e}}(h)\ol{y}_{\iota(e)})\theta(x_{\iota(e)}e\ol{x}_{\tau(e)})y_{\tau(e)}=
        \ol{y}_{\iota(e)}\theta(x_{\iota(e)}\psi_{\ol{e}}(h)\ol{x}_{\iota(e)})\theta(x_{\iota(e)}e\ol{x}_{\tau(e)})y_{\tau(e)}=
        \\
        \ol{y}_{\iota(e)}\theta(x_{\iota(e)}e\ol{x}_{\tau(e)})\theta(x_{\tau(e)}\psi_{e}(h)\ol{x}_{\tau(e)})y_{\tau(e)}=
        \ol{y}_{\iota(e)}\theta(x_{\iota(e)}e\ol{x}_{\tau(e)})(y_{\tau(e)}\psi_{e}(h)\ol{y}_{\tau(e)})y_{\tau(e)}=
        \Theta(e)\psi_{e}(h).
    \end{align*}
    Fixed the elements $x_v\in\pi_1(\cG,v_0,v)$, different choice of elements $y_v'\in y_vC(G_v)$ gives equivalent relative endomorphisms $\Theta$. A different choice of elements $x_v'\in\pi_1(\cG,v_0,v)$ gives the equivalent relative endomorphisms $\Theta$, since we can just take $y_v'=\theta(x_v'\ol{x}_v)y_v$. Composing $\theta$ with a conjugation (both before and after) does not affect $\Theta$.

    These two correspondences are inverses of each other. In particular, the bijection behaves well in both directions with respect to composition. The statement  follows.
\end{proof}

\subsection{Relative outer automorphisms}\label{sec:relauto-tree}

\begin{defn}\label{def:relauto-tree}
    Let $G$ be a group acting on a tree $T$. Define the subgroup $\outrel{G}{T}\sgr\out{G}$ given by the {\rm(}equivalence classes of{\rm)} automorphisms $\theta:G\rar G$ satisfying the following property: for all $v\in V(T)$, there is $x\in G$ such that $\theta(g)=xg\ol{x}$ for all $g\in G_v$.
\end{defn}

Let $G$ be a group acting on a tree $T$, let $\cG=(\Gamma,\{G_v\},\{G_e\},\{\psi_e\})$ be the corresponding graph of groups, and let $v_0\in V(\Gamma)$ be a basepoint. Then we have that an automorphism $\theta:G\rar G$ belongs to $\outrel{G}{T}$ if and only if the corresponding automorphism $\theta:\pi_1(\cG,v_0)\rar\pi_1(\cG,v_0)$ is a relative endomorphism. In particular, we have a natural bijection between the following groups:
\begin{enumerate}
    \item $\outrel{G}{T}$.
    \item The subgroup of $\out{\pi_1(\cG,v_0)}$ given by the (equivalence classes of) relative endomorphisms which are also automorphisms.
    \item The group of relative endomorphisms $\Theta:\FG{\cG}\rar\FG{\cG}$, considered up to equivalence, that admit an inverse.
\end{enumerate}

\begin{lem}\label{lem:relauto-domination}
    Let $G$ be a group acting on two trees $T,T'$, and suppose that $T$ dominates $T'$. Then $\outrel{G}{T'}\sgr\outrel{G}{T}$.
\end{lem}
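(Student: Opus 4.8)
The plan is to unwind both definitions and to use the characterization of domination in terms of ellipticity of vertex stabilizers recalled in Section~\ref{sec:deformation-spaces}. Recall that an outer automorphism lies in $\outrel{G}{T}$ precisely when it has a representative $\theta$ with the property that for every vertex $v\in V(T)$ there is an element $x\in G$ (depending on $v$) with $\theta(g)=xg\ol{x}$ for all $g\in G_v$. So I would fix a representative automorphism $\theta$ of an arbitrary class in $\outrel{G}{T'}$ and aim to verify this defining property for $T$ in place of $T'$.

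The heart of the argument is a single observation. Since $T$ dominates $T'$, every vertex stabilizer of $T$ is elliptic in $T'$; hence, given any $v\in V(T)$, there is a vertex $v'\in V(T')$ with $G_v\sgr G_{v'}$. Because $\theta$ represents a class in $\outrel{G}{T'}$, there exists $x\in G$ such that $\theta(g)=xg\ol{x}$ for all $g\in G_{v'}$. Restricting this identity to the subgroup $G_v\sgr G_{v'}$ gives $\theta(g)=xg\ol{x}$ for all $g\in G_v$, which is exactly the condition witnessing that $\theta$ respects the vertex $v$ of $T$. As $v\in V(T)$ was arbitrary, we conclude $\theta\in\outrel{G}{T'}$ represents a class in $\outrel{G}{T}$, giving the desired inclusion.

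Two routine points finish the write-up. First, I would confirm the defining property descends to outer classes: replacing $\theta$ by its composition with conjugation by some $h\in G$ turns the witness $x$ into $hx$, since conjugation by $x$ followed by conjugation by $h$ is conjugation by $hx$, so membership is independent of the chosen representative. Second, the existence of $v'$ uses only the domination hypothesis through its equivalent formulation ``every vertex stabilizer of $T$ is elliptic in $T'$.'' I do not expect any genuine obstacle here; the statement is a direct containment of conditions, and the only thing requiring care is correctly matching the quantifiers, namely that the witness $x$ is first produced for $v'$ in $T'$ and then reused verbatim for $v$ in $T$.
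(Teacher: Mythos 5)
Your argument is correct and is essentially identical to the paper's proof: both use domination to find, for each $v\in V(T)$, a vertex $v'\in V(T')$ with $G_v\sgr G_{v'}$, and then restrict the conjugation witness for $G_{v'}$ to $G_v$. The extra remark about independence of the representative is a harmless addition not spelled out in the paper.
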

\begin{proof}
    Let $[\theta]\in\outrel{G}{T'}$. For $v\in V(T)$ we can find $v'\in V(T')$ such that $G_v\sgr G_{v'}$. By definition, there is $x\in G$ such that $\theta(g)=xg\ol{x}$ for all $g\in G_{v'}$. In particular, $\theta(g)=xg\ol{x}$ for all $g\in G_v$.
\end{proof}

In particular, the group $\outrel{G}{T}$ depends only on the deformation space of the tree $T$.

\section{Comparing automorphism groups}\label{sec:comparing-automorphism-groups}

In his influential paper \cite{Sel97}, Sela introduced a new tool, the canonical JSJ decomposition of torsion-free hyperbolic groups, and showed that it can be used to describe the outer automorphism group in terms of the splitting (as the fundamental group of a graph of groups with cyclic edge groups). More precisely, he showed that the outer automorphism group is (virtually) 
generated by Dehn twists along edges of the splitting, together with outer automorphisms of the vertex groups, which in that case correspond to mapping class groups of punctured surfaces. These results were generalized by Levitt in \cite{Lev05} to simplify and extend Sela's results for torsion-free hyperbolic groups, and later by Guirardel-Levitt in \cite{GL15} to relatively hyperbolic groups.

In this section, we show that the outer automorphism group of the fundamental group of a graph of groups with cyclic edge groups and whose vertex groups are torsion-free and have cyclic centralizers, can also be described using the JSJ decomposition. In this case, the outer automorphism group is (virtually) generated by (some) outer automorphisms of the vertex groups, the outer automorphism groups of canonically associated generalized Baumslag-Solitar groups, and generalized twists --- partial conjugations by elements in the centralizer of some elliptic elements, see Theorem \ref{thm:description-outrel}. In particular, we describe the outer automorphism groups of one-ended fundamental groups of graphs of groups of torsion-free hyperbolic groups with cyclic edge groups.

\begin{thm*}[see Corollary \ref{cor:graph_hyperbolic_group}]
Let $\cG$ be a graph of groups with torsion-free hyperbolic vertex groups and infinite cyclic edge groups. Suppose that $G=\pi_1(\cG)$ is freely indecomposable. Then there are exact sequences
     \[
    \begin{array}{c}
    1 \to \outr{G} \to \outz{G} \to \prod MCG(\Sigma_v)\times (\hbox{finite group})\times (\hbox{fg abelian});\\[0.2cm]
        1\rar Z \rar\prod_{\rho\in\cR}C(\rho) \xrightarrow{\tau} \outr{G}\xrightarrow{\sigma}\outr{G_1}\times\outr{G_2}\times\dots \times\outr{G_s}\times(D_\infty)^t \to 1
    \end{array}
    \]
    for some finite set of elliptic elements $\cR\subseteq G$, for some finitely generated free abelian group $Z$, for some integers $s,t\ge0$, and for some GBSs $G_1,\dots,G_s$ {\rm(}here $D_\infty$ is the infinite dihedral group{\rm)}.
\end{thm*}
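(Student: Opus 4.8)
The plan is to deduce both exact sequences from the two structural results already in hand, \Cref{introthm:preliminary-sequences} and \Cref{thm:description-outrel}, by feeding in the special features of torsion-free hyperbolic vertex groups. The single input that unlocks \Cref{thm:description-outrel} is the observation that a torsion-free hyperbolic group has cyclic centralizers: every nontrivial element has infinite order, its centralizer is virtually cyclic, and a torsion-free virtually-$\bbZ$ group is infinite cyclic. Hence the family $\fF$ of torsion-free hyperbolic groups satisfies the hypotheses of \Cref{thm:description-outrel}, and the second displayed exact sequence
\[
1\rar Z \rar\prod_{\rho\in\cR}C(\rho) \xrightarrow{\tau} \outr{G}\xrightarrow{\sigma}\outr{G_1}\times\dots \times\outr{G_s}\times(D_\infty)^t \to 1
\]
is literally the conclusion of that theorem applied to $\cG$; no further work is needed for this half beyond checking the centralizer condition.

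For the first exact sequence I would start from the second sequence of \Cref{introthm:preliminary-sequences}, which already gives
\[
1\rar\outr{G}\rar\outz{G} \rar \Big(\prod_{v}\out{G_v}\Big)\times(\text{fg abelian}),
\]
the product running over the non-cyclic vertex groups of the cyclic JSJ of $G$ (which exists by \Cref{thm:JSJ-existence}, as $G$ is finitely presented). Thus $\outr{G}$ is already identified as the kernel, and it only remains to refine the target. The image of $\outz{G}$ in each factor $\out{G_v}$ consists of outer automorphisms induced by automorphisms of $G$, and these necessarily preserve, setwise, the peripheral structure of $G_v$ (the conjugacy classes of incident edge groups); so the image lands in the subgroup $\mathrm{Out}(G_v,\mathrm{Periph})$ preserving the peripheral structure.

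The next step is to classify the non-cyclic JSJ vertices and compute this subgroup for each. By the structure theory of cyclic JSJ decompositions in this setting, a non-cyclic vertex is either flexible, in which case it is a quadratically hanging surface group and $\mathrm{Out}(G_v,\mathrm{Periph})$ is the mapping class group $MCG(\Sigma_v)$, or rigid, in which case $\mathrm{Out}(G_v,\mathrm{Periph})$ is finite. Collecting the surface vertices gives the factor $\prod MCG(\Sigma_v)$; the finitely many finite groups from the rigid vertices multiply to a single finite group; and together with the abelian factor coming from the ascending (solvable Baumslag–Solitar) cyclic vertices already isolated in \Cref{introthm:preliminary-sequences}, this produces the target $\prod MCG(\Sigma_v)\times(\text{finite group})\times(\text{fg abelian})$ with kernel still exactly $\outr{G}$, which is the first exact sequence.

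The main obstacle is the finiteness statement for rigid vertices: one must show that a rigid vertex group (hyperbolic, or at least assembled from the hyperbolic vertex groups of $\cG$) has only finitely many peripheral-structure-preserving outer automorphisms. This is a relative rigidity result of Rips–Sela / Paulin type: were $\mathrm{Out}(G_v,\mathrm{Periph})$ infinite, Paulin's construction would yield a nontrivial isometric action of $G_v$ on an $\bbR$-tree with the peripheral subgroups elliptic, and the Rips machine would extract a nontrivial cyclic splitting of $G_v$ relative to its peripheral structure, contradicting universal ellipticity of the rigid vertex in the JSJ. Establishing this, together with confirming that flexible vertices are genuine surface groups with $\mathrm{Out}(G_v,\mathrm{Periph})\cong MCG(\Sigma_v)$, is the only place where hyperbolicity of the vertex groups is essential; the rest is bookkeeping layered on top of \Cref{introthm:preliminary-sequences} and \Cref{thm:description-outrel}.
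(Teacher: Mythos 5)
Your proposal is correct and follows essentially the same route as the paper: the second sequence is \Cref{thm:description-outrel} applied to $\cG$ after observing that torsion-free hyperbolic groups are finitely presented, torsion-free, and have cyclic centralizers, and the first sequence is the refinement of \Cref{prop:outr} obtained by identifying the flexible, rigid, and cyclic vertex contributions as $MCG(\Sigma_v)$, finite, and finitely generated abelian, respectively. The only step you spell out that the paper leaves implicit is the Paulin/Rips-machine finiteness argument for the peripheral-structure-preserving outer automorphisms of the rigid vertex groups.
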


From the combination theorem of Bestvina and Feighn, see \cite{BF92}, we have that all the associated GBSs $G_i$ are cyclic, {\rm(}and so are the centralizers $C(\rho)${\rm)} if and only if $G$ is hyperbolic. In this case, since there are no solvable Baumslag-Solitar groups, there are no abelian groups in the first exact sequence; and $\outr {G}$ fits into an exact sequence with trivial quotient and abelian kernel, that is $\outr{G}$ is finitely generated abelian. We therefore recover the result of Sela for one-ended torsion-free hyperbolic groups, see Theorem \ref{thm:sela}.

\subsection{Decomposition of the outer automorphism group}

Let $G$ be a finitely presented torsion-free group, and suppose that $G$ is freely indecomposable. By Theorem \ref{thm:JSJ-existence} there exists a JSJ decomposition of $G$ over the family of its cyclic subgroups. By \cite{ACK-iso1}, we can take a \textit{totally reduced} JSJ decomposition (i.e. if two vertices have stabilizers one contained in the other, then they belong to the same orbit).

\begin{lem}\label{lem:bijection-vertices}
    Let $J,J'$ be two totally reduced JSJ decompositions for $G$ over the family of its cyclic subgroups, and let $\Gamma,\Gamma'$ be the quotient graphs. Then there is a bijection $b:V(\Gamma')\rar V(\Gamma)$ satisfying the following:
    \begin{enumerate}
        \item For all $v\in V(J)$ there is $v'\in V(J')$ such that $G_v\sgr G_{v'}$, and all such $v'$ belong to the same orbit $b^{-1}(v/G)$.
        \item For all $v'\in V(J')$ there is $v\in V(J)$ such that $G_{v'}\sgr G_v$, and all such $v$ belong to the same orbit $b(v'/G)$.
    \end{enumerate}
\end{lem}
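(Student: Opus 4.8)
The plan is to exploit that $J$ and $J'$, being two JSJ decompositions, lie in the same deformation space, so each dominates the other. By the characterisation of domination recalled in Section~\ref{sec:deformation-spaces}, this already yields the two existence statements: for every $v\in V(J)$ the stabiliser $G_v$ is elliptic in $J'$, hence $G_v\sgr G_{v'}$ for some $v'\in V(J')$, and symmetrically every $G_{v'}$ sits inside some $G_v$. The real content of the lemma is that the vertex $v'$ (resp.\ $v$) is determined up to its $G$-orbit, so that these two assignments descend to mutually inverse bijections of the orbit sets $V(\Gamma')$ and $V(\Gamma)$.

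First I would record the ``sandwich'' consequence of total reducedness: if $G_v\sgr G_{v'}\sgr G_w$ with $v,w\in V(J)$ and $v'\in V(J')$, then $G_v\sgr G_w$ are comparable, so $v\sim_G w$ (same orbit); the analogous statement holds with the roles of $J,J'$ exchanged. Two corollaries follow. First, conjugate stabilisers force the same orbit: if $G_{v'}=g\,G_{v''}\,g^{-1}$ then $v'$ and $gv''$ have equal stabilisers, whence $v'\sim_G v''$, so the map from orbits of $\Gamma'$ to conjugacy classes of vertex stabilisers is injective (and likewise for $\Gamma$). Second, the round trip is the identity on orbits: starting from $v$, choosing $v'$ with $G_v\sgr G_{v'}$ and then $w$ with $G_{v'}\sgr G_w$, the sandwich gives $v\sim_G w$; by symmetry the reverse composite is also the identity. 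Hence, once the assignments are well defined on orbits, they are automatically mutually inverse bijections satisfying the required properties (1) and (2).

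It remains to prove well-definedness, namely that $\{\,v'\in V(J'):G_v\sgr G_{v'}\,\}$ is a single $G$-orbit; this is the main obstacle. Equivalently, I must show that the fixed subtree $\Fix_{J'}(G_v)$ maps to a single vertex of the quotient $\Gamma'$. If $G_v$ is contained in no edge stabiliser of $J'$, this is automatic: the fixed subtree then contains no edge, so it is a single vertex and $b^{-1}(v/G)$ is unambiguous. Since edge stabilisers of a cyclic JSJ are cyclic, this disposes of every non-cyclic $v$ at once, and the whole difficulty is concentrated in the cyclic vertices $v$ whose stabiliser is conjugate into an edge group of $J'$, where $\Fix_{J'}(G_v)$ may genuinely contain an edge.

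For this residual cyclic case I would use convexity of the fixed subtree to reduce to showing that two adjacent vertices $a',b'$ of $\Fix_{J'}(G_v)$ lie in the same orbit, and then conclude by induction along the path. I expect this to be the hard point: the sandwich alone does not suffice, because a cyclic $G_v$ can be \emph{properly} self-conjugate (exactly as happens inside solvable Baumslag--Solitar subgroups), so the two endpoint stabilisers need not be conjugate a priori and corollary~(i) does not apply. Resolving it should require the finer structure of the totally reduced JSJ from \cite{ACK-iso1} --- that cyclic vertex groups are maximal cyclic, and that the two lifts of such a fixed edge in $J'$ are exchanged by a slide/deformation that preserves orbits. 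Granting this, adjacent vertices of $\Fix_{J'}(G_v)$ are identified in $\Gamma'$, well-definedness follows, and together with the round-trip computation the assignments assemble into the desired bijection $b:V(\Gamma')\rar V(\Gamma)$.
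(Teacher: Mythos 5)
Your setup (mutual domination gives existence, the ``sandwich'' from total reducedness gives the round-trip identity on orbits) is exactly the paper's argument, which consists of precisely the chain $G_v\sgr G_{v'}\sgr G_{v''}$ forcing $v,v''$ into the same orbit. The problem is the step you flag as the ``main obstacle'' and then leave conditional: well-definedness on orbits when $G_v$ is cyclic and fixes an edge of $J'$. You assert that the sandwich alone does not suffice there and that one needs finer input from \cite{ACK-iso1} (maximal cyclicity, slides exchanging lifts of a fixed edge), and you conclude only ``granting this''. As written, that is a genuine gap in your proof.

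However, the gap closes with the tools you already have; no fixed-subtree analysis, convexity, or slide moves are needed, and the self-conjugacy of cyclic groups is a red herring because total reducedness is phrased in terms of containment of vertex stabilizers, not conjugacy. Suppose $G_v\sgr G_{v_1'}$ and $G_v\sgr G_{v_2'}$ with $v_1',v_2'\in V(J')$. Since $J'$ dominates $J$, pick $w\in V(J)$ with $G_{v_1'}\sgr G_w$; then $G_v\sgr G_w$, and total reducedness of $J$ gives $w=gv$ for some $g\in G$. Hence $G_{v_1'}\sgr G_{gv}=gG_vg^{-1}\sgr gG_{v_2'}g^{-1}=G_{gv_2'}$, and now total reducedness of $J'$ (applied to the two vertices $v_1'$ and $gv_2'$ of $J'$) forces $v_1'$ and $v_2'$ into the same orbit. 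This is just your sandwich run a second time with the roles of $J$ and $J'$ exchanged, and it handles cyclic and non-cyclic vertices uniformly. With this substituted for your conditional step, your argument is complete and coincides with the paper's.
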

\begin{proof}
    Take a vertex $v\in V(J)$, take a vertex $v'\in V(J')$ such that $G_v\sgr G_{v'}$ (which exists since $J,J'$ are JSJ decompositions), take a vertex $v''\in V(J)$ such that $G_{v'}\sgr G_{v''}$ (which exists since $J,J'$ are JSJ decompositions). This implies that $G_v\sgr G_{v''}$, but since $J$ is totally reduced, this forces $v,v''$ to belong to the same orbit of vertices of $J$. The statement  follows.
\end{proof}

Let $J$ be a totally reduced JSJ decompositions for $G$ over the family of its cyclic subgroups, and let $\cG=(\Gamma,\{G_v\},\{G_e\},\{\psi_e\})$ be the corresponding graph of groups. An automorphism $\theta:G\rar G$ allows us to construct another totally reduced JSJ decomposition $J'$, given by the same tree $J$ with the action $g\cdot'p:=\theta^{-1}(g)\cdot p$. Notice that the two actions have the same orbits. If $v\in V(J)$ we denote by $G_v$ the stabilizer under the action $\cdot$ and with $G_v'=\theta(G_v)$ the stabilizer under the action $\cdot'$. In particular, by Lemma \ref{lem:bijection-vertices} we obtain a bijection $\perm{\theta}:V(\Gamma)\rar V(\Gamma)$. 
A composition of automorphisms gives a composition of bijections, and a conjugation automorphism gives the identity. In particular, we obtain a homomorphism $\Perm:\out{G}\rar\Sigma$, where $\Sigma$ is the finite group of permutations of the finite set $V(\Gamma)$.

\begin{defn}\label{def:outz}
    Define the finite index normal subgroup $\outz{G}\nor\out{G}$ given by the kernel of the map $\Perm$, so that we have an exact sequence 
    \[
        1\rar\outz{G}\rar\out{G}\rar\Sigma.
    \]
\end{defn}

\begin{remark}
     $\outz{G}$ does not depend on the chosen totally reduced JSJ decomposition $J$.
\end{remark}

Let $[\theta]\in\outz{G}$ where $\theta:G\rar G$ is an automorphism. Chosen a basepoint $v_0\in\Gamma$, this can be considered as an automorphism $\theta:\pi_1(\cG,v_0)\rar\pi_1(\cG,v_0)$.

Take a vertex $v\in V(\Gamma)$ with $G_v\not\cong\bbZ$. Then we can find $x\in\pi_1(\cG,v_0,v)$ such that $\theta(G_v)\sgr xG_v\ol{x}$; the fact that $G_v\not\cong\bbZ$ forces the equality $\theta(G_v)=xG_v\ol{x}$. In particular, we obtain an automorphism $\theta_{v,x}:G_v\rar G_v$ given by $g\mapsto \ol{x}\theta(g)x$. A different choice of $x'\in\pi_1(\cG,v_0,v)$ must satisfy $x'\in xG_v$, and thus the automorphism $\theta_{v,x'}$ differs from $\theta_{v,x}$ by post-composition with a conjugation. In particular, we obtain a well-defined element $[\theta_v]\in\out{G_v}$. This defines a homomorphism $\eta_v:\outz{G}\rar\out{G_v}$ given by $[\theta]\mapsto[\theta_v]$.

Take a vertex $v\in V(\Gamma)$ with $G_v\cong\bbZ$; let $z$ be a generator for $G_v$. We consider the set of primes $\cP(v)=\{p$ prime $: z$ is conjugate to $z^a$ for some $a$ multiple of $p\}$ and we define the finitely generated abelian group $P_v\sgr\bbQ\setminus\{0\}$ of the rational numbers such that their factorization contains only prime numbers in $\cP(v)$ (possibly with negative exponent). We define the finitely generated abelian group $Q_v\sgr P_v$ as the one generated by the integers $a\in\bbZ\setminus\{0\}$ such that $z$ is conjugate to $z^a$. We can find $x\in\pi_1(\cG,v_0,v)$ such that $\theta(z)=xz^b\ol{x}$ for some integer $b\in P_v$; moreover, a different choice of element $x'\in\pi_1(\cG,v_0,v)$ and integer $b'\in P_v$ with $\theta(z)=x'z^{b'}\ol{x}'$ must satisfy $b'\ol{b}\in Q_v$. In particular, we obtain a well-defined element $b/Q_v\in P_v/Q_v$. This defines a homomorphism $\eta_v:\outz{G}\rar P_v/Q_v$ given by $[\theta]\mapsto b/Q_v$.

\begin{remark}
    We point out that a finite set of generators for $Q_v$ can be computed algorithmically. In fact, such a set of generators is given (up to passing from the additive to the multiplicative notation) by the elements $\bw_1,\dots,\bw_m$ given by \cite[Proposition 3.9]{ACK-iso1}. Similarly, the quotient $P_v/Q_v$ can be computed algorithmically.
\end{remark}

We define 
$$\eta = \prod_{v\in V(\Gamma)}\eta_v:\outz{G}\to
\prod_{\underset{G_v\not\cong\bbZ}{v\in V(\Gamma)}}
\out{G_v}\times
\prod_{\underset{G_v\cong\bbZ}{v\in V(\Gamma)}}
P_v/Q_v.$$
It is immediate from the definitions that $[\theta]\in\outz{G}$ belongs to all the kernels of the maps defined above if and only if $\theta:\pi_1(\cG,v_0)\rar\pi_1(\cG,v_0)$ is a relative automorphism (according to Definition \ref{def:relendo-fundamentalgroup}). 

\begin{prop}\label{prop:outr}
    We have an exact sequence
    \[
        1\rar\outr{G}\rar\outz{G} \xrightarrow{\eta}
        \prod_{\underset{G_v\not\cong\bbZ}{v\in V(\Gamma)}}
        \out{G_v}\times
        \prod_{\underset{G_v\cong\bbZ}{v\in V(\Gamma)}}
        P_v/Q_v,
    \]
    where $\outr{G}=\outrel{G}{J}$ for any JSJ decomposition $J$ of $G$ over its cyclic subgroups.
\end{prop}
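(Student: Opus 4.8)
The plan is to verify that the sequence in \Cref{prop:outr} is exact at each of its three nontrivial spots. Most of the work has already been set up: the map $\eta=\prod_v\eta_v$ is assembled from the vertexwise homomorphisms $\eta_v:\outz{G}\to\out{G_v}$ (for $G_v\not\cong\bbZ$) and $\eta_v:\outz{G}\to P_v/Q_v$ (for $G_v\cong\bbZ$) constructed immediately before the statement, and each of these was already argued to be a well-defined homomorphism. Since a finite product of homomorphisms is a homomorphism, $\eta$ is a homomorphism, giving exactness at the right-hand term for free (the sequence does not claim surjectivity, the arrow having no ``$\to 1$''). The group $\outr{G}$ is defined as $\outrel{G}{J}$, and by the last sentence of \Cref{sec:relauto-tree} this is independent of the choice of $J$ inside the JSJ deformation space; combined with \Cref{lem:relauto-domination} and the fact (noted after \Cref{thm:JSJ-existence}) that all JSJ decompositions form a single deformation space, the subgroup $\outr{G}\sgr\outz{G}$ is well-defined and the phrase ``for any JSJ decomposition $J$'' is justified.

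The heart of the argument is exactness at $\outz{G}$, i.e.\ the identification $\ker\eta=\outr{G}$. First I would record the key observation stated just before the proposition: an element $[\theta]\in\outz{G}$ lies in $\ker\eta$ if and only if $\theta:\pi_1(\cG,v_0)\rar\pi_1(\cG,v_0)$ is a relative automorphism in the sense of \Cref{def:relendo-fundamentalgroup}. This is the crux, and I would prove both implications by unwinding the definitions of the $\eta_v$. For a non-cyclic vertex, $\eta_v([\theta])=[\theta_v]$ is trivial in $\out{G_v}$ exactly when the induced automorphism $\theta_{v,x}:g\mapsto\ol{x}\theta(g)x$ is inner on $G_v$; post-composing $x$ by the relevant inner element then exhibits a single conjugator $y\in\pi_1(\cG,v_0,v)$ with $\theta(xg\ol{x})=yg\ol{y}$ for all $g\in G_v$, which is precisely condition in \Cref{def:relendo-fundamentalgroup} at $v$. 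For a cyclic vertex with generator $z$, writing $\theta(z)=xz^b\ol{x}$, the class $b/Q_v\in P_v/Q_v$ vanishes iff $b\in Q_v$, i.e.\ iff $z$ is conjugate to $z^b$; here I would invoke the definition of $Q_v$ as generated by the integers $a$ with $z\cnj z^a$ to produce an element $w$ with $wz\ol{w}=z^b$, and then $y=xw$ conjugates $z\mapsto\theta(z)$, again matching \Cref{def:relendo-fundamentalgroup}. The converse direction is the same computation read backwards: if $\theta$ is a relative automorphism, each witnessing conjugator forces the corresponding $\eta_v([\theta])$ to be trivial. Finally, by the natural bijection listed at the start of \Cref{sec:relauto-tree}, the relative automorphisms are exactly the elements of $\outrel{G}{J}=\outr{G}$, closing the identification.

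Exactness at $\outr{G}$ is just the statement that the inclusion $\outr{G}\hookrightarrow\outz{G}$ is injective, which is immediate. The main obstacle I anticipate is bookkeeping around basepoints and conjugation ambiguity: the conjugators $x$ live in the set $\pi_1(\cG,v_0,v)$ of $(v_0,v)$-sound elements rather than in a single group, and the various ``well-defined up to $\cdot$'' clauses (choice of $x$ up to $xG_v$, choice of $y$ up to $yC(G_v)$, choice of lift of $b$ modulo $Q_v$) must be tracked carefully so that the trivially-of-$\eta_v$ condition is genuinely equivalent to the existence of a \emph{single} conjugator working simultaneously for all $g\in G_v$. Once the non-cyclic and cyclic cases are each handled with this care, the equivalence $\ker\eta=\outr{G}$ follows, and the proposition is proved.
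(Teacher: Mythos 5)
Your proposal is correct and follows exactly the route the paper intends: the paper's own proof of \Cref{prop:outr} is literally ``Immediately follows from the above discussion,'' and your write-up is a careful expansion of that discussion (the construction of the $\eta_v$, the identification of $\bigcap_v\ker\eta_v$ with the relative automorphisms of \Cref{def:relendo-fundamentalgroup}, and the independence of $\outrel{G}{J}$ from the choice of $J$ within the JSJ deformation space). No discrepancy in approach; you have simply supplied the details the authors chose to omit.
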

\begin{remark}
    In Proposition \ref{prop:outr} above, the quotients $P_v/Q_v$ are finitely generated abelian groups.
\end{remark}
\begin{proof}
    Immediately follows from the above discussion.
\end{proof}

\begin{remark}
We can define the mapping class group $MCG(G_v) < \out{G_v}$ by restricting to automorphisms which act on each edge group $G_e$ as conjugation by some
$g_e \in G_v$, for $e\in E(\Gamma)$ such that $\tau(e)=v$. In \cite[Proposition 2.1]{Lev05}, Levitt shows that $\prod_{v\in V(\Gamma)} MCG(G_v)$ is contained in the image of $\eta$ (which coincides with the image of the subgroup of outer automorphisms that fixes $\Gamma$). Furthermore, if $\out{G_e}$ is finite, then $\prod_{v\in V(\Gamma)} MCG(G_v)$ has finite index in the image.
\end{remark}

\subsection{Choice of roots}

\begin{defn}
    A group $H$ has \textbf{cyclic centralizers} if, for every $h\in H$, its centralizer $\{c\in H : ch=hc\}$ is cyclic.
\end{defn}

An element $r\in H$ is called \textbf{root} if it is not a proper power of another element (i.e. if $r\not=s^m$ for all $s\in H$ and $m\ge2$). If $H$ is a torsion-free group with cyclic centralizers, then every element $h\in H\setminus\{1\}$ has a unique root, i.e. there is a unique root element $r_h\in H$ such that $h=r_h^k$ for some $k\ge1$. The element $r_h$ is one of the two generators of the centralizer $\{c\in H : ch=hc\}\cong\bbZ$.

\begin{remark}
    Suppose that $H$ is a torsion-free group with cyclic centralizers, and let $h\in H\setminus\{1\}$. If $h^i$ is conjugate to $h^j$ for some $i,j\in\bbZ$, then $i=j$, see \cite[Lemma 8.21]{ACK-iso1}. In particular, the centralizer of $h$ coincides with the normalizer of the cyclic subgroup $\gen{h}$.
    \end{remark}

Let $\fF$ be a family of finitely presented torsion-free groups with cyclic centralizers. We consider a graph of groups with vertex groups in $\fF$ and infinite cyclic edge groups. For every edge, we can take the generator of such edge, and look at its image in the adjacent vertex group: we want to choose, inside the vertex group, a root for such element. We do not want this list of roots to contain repetitions: if two edge groups are glued on (powers of) the same root, then we want to list the root only once. Moreover, since edge inclusions can be changed by conjugation without changing the corresponding Bass-Serre tree, we want conjugate roots to be considered the same. This is all summarized in the following definition.

\begin{defn}\label{def:choice-of-roots}
    Let $\cG=(\Gamma,\{G_v\}_{v\in V(\Gamma)},\{G_e\}_{e\in E(\Gamma)},\{\psi_e\}_{e\in E(\Gamma)})$ be a graph of groups with vertex groups in $\fF$ and edge groups isomorphic to $\bbZ$.
    \begin{enumerate}
        \item A \textbf{choice of cyclic roots} is a finite set $\cyclicR{\cG}$, minimal by inclusion, satisfying the following property: for all $v\in V(\Gamma)$ with $G_v\cong\bbZ$, the set $\cyclicR{\cG}$ contains exactly one generator of $G_v$.
        \item A \textbf{choice of non-cyclic roots} is a finite set $\notcyclicR{\cG}$, minimal by inclusion, satisfying the following property: for all $e\in E(\Gamma)$ with $G_{\tau(e)}\not\cong\bbZ$, there is a unique $\rho\in\notcyclicR{\cG}$ such that the root of $\psi(e)$ is conjugate to $\rho^{\pm1}$ in $G_{\tau(e)}$.
        \item A \textbf{choice of roots} is a union $\cR(\cG)=\notcyclicR{\cG}\cup\cyclicR{\cG}$ for some choices of non-cyclic and cyclic roots.
    \end{enumerate}
    These are uniquely determined up to taking inverses and conjugates of the roots in their respective vertex groups.
\end{defn}

\subsection{The GBS core of a graph of groups}\label{sec:GBS-core}

Let $\fF$ be a family of finitely presented torsion-free groups with cyclic centralizers. Let $\cG=(\Gamma,\{G_v\},\{G_e\},\{\psi_e\})$ be a graph of groups with vertex groups in $\fF$ and infinite cyclic edge groups. Consider a choice of roots $\cR(\cG)$ (see Definition \ref{def:choice-of-roots}). Up to changing the inclusion map $\psi_e:G_e\rar G_{\tau(e)}$ by conjugation, we can assume that the image of $\psi_e:G_e\rar G_{\tau(e)}$ is contained in $\gen{\rho_e}$ for some (uniquely determined) $\rho_e\in\cR(\cG)$.

\begin{defn}
    Define the \textbf{GBS core} $\cD=(\Delta,\{H_v\}_{v\in V(\Delta)},\{H_e\}_{e\in E(\Delta)},\{\phi_e\}_{e\in E(\Delta)})$ of $\cG$ as follows {\rm(}see {\rm Figure \ref{fig:reduction-GBS})}:
    \begin{enumerate}
        \item $\Delta$ is a graph. We set $V(\Delta)=\cR(\cG)$. We set $E(\Delta)=E(\Gamma)$ with the same reverse map.
        \item For $e\in E(\Delta)$, we set $\tau_{\Delta}(e)=\rho_e$.
        \item For $\rho\in V(\Delta)$, define the group $H_v=\gen{\rho}\cong\bbZ$. For $e\in E(\Delta)$, define the group $H_e=G_e\cong\bbZ$.
        \item For $e\in E(\Delta)$, define the homomorphism $\phi_e:H_e\rar H_{\tau(e)}$ by setting $\phi_e=\psi_e$.
    \end{enumerate}
   
\end{defn}

The quadruple $\cD$ satisfies all the hypotheses for being a graph of groups, except that the finite graph $\Delta$ may be disconnected. We write $\Delta=\Delta_1\sqcup\dots \sqcup\Delta_k$ as the disjoint union of its connected components, for $i=1,\dots,k$, and we have that every $\Delta_i$ defines a GBS graph of groups $\cD_i$. In what follows, we will say that $\cD=\cD_1\sqcup\dots\sqcup\cD_k$ is the GBS core of $\cG$, meaning that $\cD_i$ are the GBS graphs of groups induced by the connected components of $\cD$.

\begin{figure}[H]
	\centering
	\begin{tikzpicture}[scale=1]
			\begin{scope}[shift={(0,0)}]
			\node (p1) at (0.2,0) {.};
			\node (p2) at (-.3,.3) {.};
			\node () at (0.1,-0.2) {$\rho_1$};
			\node () at (-0.4,.1) {$\rho_2$};
			\draw (0,0) circle [radius=0.6]; 
			\draw (3,0) circle [radius=0.6]; 
			\node (p5) at (2.8,0) {.};
			\node () at (2.8,-0.2) {$\rho_5$};
		
	   	\draw (1.5,2) circle [radius=0.6]; 
            \node (p3) at (1.5,1.7) {.};
	   	\node () at (1.3,1.8) {$\rho_3$};
            \node (p4) at (1.7,2.2) {.}; 	
	   	\node () at (1.5,2.3) {$\rho_4$};
            \draw[-] (p2) to (p3);
            \draw[-,out=10,in=170,looseness=1] (p1) to (p5);
		\draw[-,out=-10,in=-170,looseness=1] (p1) to (p5);
	   \draw[-,out=-30,in=-20,looseness=10] (p3) to (p4);	
		\draw[-,out=10,in=45,looseness=50] (p4) to (p4);	
		\node () at (1.5,-1.5) {$\Gamma$ (with roots in evidence)};	
		\end{scope}

	\begin{scope}[shift={(6,0)}]	
		\node (p1) at (0.2,0) {.};
		\node (p2) at (-.3,.3) {.};	
		\node () at (0.1,-0.2) {$\rho_1$};
		\node () at (-0.4,.1) {$\rho_2$};
  \node (p5) at (2.8,0) {.};
	\node () at (2.8,-0.2) {$\rho_5$};
		\node (p3) at (1.3,1.5) {.};
		\node () at (1.1,1.6) {$\rho_3$};
		\node (p4) at (1.9,2.4) {.}; 	
		\node () at (1.7,2.5) {$\rho_4$};
		\draw[-] (p2) to (p3);
		\draw[-] (p3) to (p4);
		\draw[-,out=10,in=170,looseness=1] (p1) to (p5);
		\draw[-,out=-10,in=-170,looseness=1] (p1) to (p5);
		\draw[-,out=10,in=45,looseness=50] (p4) to (p4);
		\node () at (1.5,-1.5) {$\Delta$};
	\end{scope}
	
	\begin{scope}[shift={(-5,0)}]
		\node (p1) at (0.2,0) {.};
		\node (p5) at (2.8,0) {.};
		\node (p4) at (1.5,1.5) {.}; 	
  \draw[-] (p1) to (p4);
        \draw[-,out=10,in=170,looseness=1] (p1) to (p5);
		\draw[-,out=-10,in=-170,looseness=1] (p1) to (p5);
			\draw[-,out=10,in=45,looseness=50] (p4) to (p4);
	\draw[-,out=90,in=125,looseness=50] (p4) to (p4);
		\node () at (1.5,-1.5) {$\Gamma$};
		\end{scope}
	\end{tikzpicture}
	\caption{The (possibly disconnected) GBS graph associated with a graph of groups with cyclic edges.}
	\label{fig:reduction-GBS}
\end{figure}
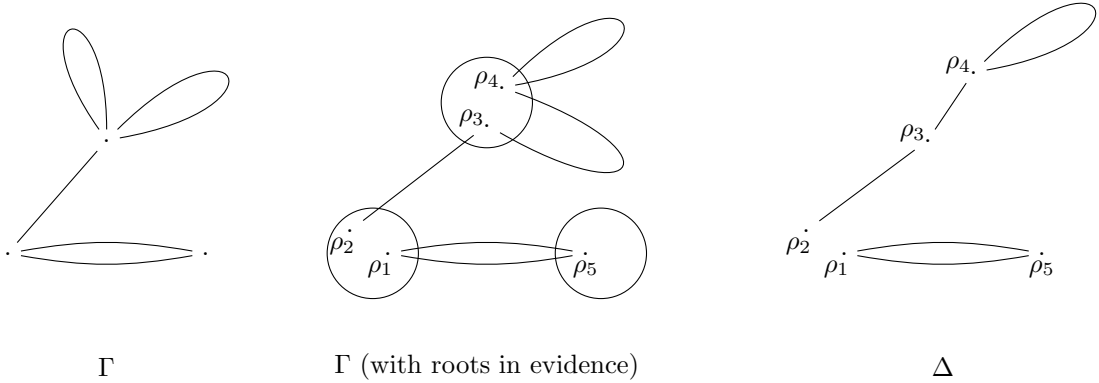

\begin{remark}\label{rem:embedding_GBS_in_group}
For every $i=1,\dots,k$, we have a natural homomorphism $\FG{\cD_i}\rar\FG{\cG}$: by definition, the set of generators for $\FG{\cD_i}$ is a subset of the set of generators for $\FG{\cG}$, and relations of $\FG{\cD_i}$ are also relations in $\FG{\cG_i}$. It is immediate to check that such inclusion sends sound elements to sound elements. Given $\rho\in V(\Delta_i)\subseteq\cR(\cG)$ with $\rho\in G_v$ for $v\in V(\Gamma)$, we obtain a homomorphism $\pi_1(\cD_i,\rho)\rar\pi_1(\cG,v)$. Using Britton's Lemma \ref{lem:sound-writing}, we obtain that this homomorphism $\pi_1(\cD_i,\rho)\rar\pi_1(\cG,v)$ is injective, meaning that $\pi_1(\cD_i,\rho)$ is a subgroup of $\pi_1(\cG,v)$.    
\end{remark}

In what follows, we will need the following Proposition \ref{prop:comparison-of-centralizers}, stating that the centralizers of the roots in $\cG$ are actually contained in $\cD$. Recall from Definition \ref{def:centralizer-element} the notion of centralizer of an element in a graph of groups.

\begin{prop}[Comparison of centralizers]\label{prop:comparison-of-centralizers}
    Let $\cD=\cD_1\sqcup\dots\sqcup\cD_k$ be the GBS core of $\cG$. Then, for every $\rho\in\cR(\cG)$, we have $C_\cG(\rho)\sgr\pi_1(\cD_1,\rho)$, where $\cD_1$ is the connected component of $\cD$ containing $\rho$, and in particular $C_\cG(\rho)=C_{\cD_1}(\rho)$.
\end{prop}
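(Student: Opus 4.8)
The plan is to take an arbitrary $c\in C_\cG(\rho)$, fix a reduced sound writing $c=g_0e_1g_1\cdots e_\ell g_\ell$ with $\iota(e_1)=\tau(e_\ell)=v$ (where $\rho\in G_v$), and read off from the relation $\rho c=c\rho$ that every vertex letter $g_i$ is a power of the appropriate root and every edge $e_i$ lies in the connected component $\Delta_1$ of $\rho$; this is exactly the assertion that $c$ lies in the image of $\pi_1(\cD_1,\rho)$ under the embedding of \Cref{rem:embedding_GBS_in_group}. The case $\ell=0$ is immediate, since then $c\in G_v$ commutes with $\rho$ and $C_{G_v}(\rho)=\gen{\rho}$ because $G_v$ has cyclic centralizers.

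For $\ell\ge1$, the key observation is that $\rho c$ and $c\rho$ are the same element $w$, and that multiplying $c$ by $\rho$ on the left (resp.\ right) only alters the first letter $g_0$ (resp.\ the last letter $g_\ell$), leaving all edges and all interior letters untouched. Hence $(\rho g_0)e_1g_1\cdots e_\ell g_\ell$ and $g_0e_1g_1\cdots e_\ell(g_\ell\rho)$ are two reduced sound writings of $w$. Britton's Lemma~\ref{lem:sound-writing} then furnishes edge elements $h_i\in G_{e_i}$ relating these writings; comparing the first letters gives $\psi_{\ol e_1}(h_1)=g_0^{-1}\rho g_0$, comparing interior letters gives $\psi_{\ol e_{i+1}}(h_{i+1})=g_i^{-1}\psi_{e_i}(h_i)g_i$, and comparing the last letters gives $\psi_{e_\ell}(h_\ell)=g_\ell\rho g_\ell^{-1}$. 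Setting $\rho_i:=\psi_{e_i}(h_i)$ and $\rho_i':=\psi_{\ol e_i}(h_i)$, these relations exhibit a nontrivial conjugate of $\rho$ ``flowing'' along the path, with $\rho_i$ a power of the root $\rho_{e_i}$ and $\rho_i'$ a power of $\rho_{\ol e_i}$ by the normalization $\psi_e(G_e)\sgr\gen{\rho_e}$.

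The rigidity now comes from two facts about torsion-free groups with cyclic centralizers. First, conjugation preserves roots, so from $\rho_{i+1}'=g_i^{-1}\rho_ig_i$ we get that $\rho_{e_i}$ and $\rho_{\ol e_{i+1}}$ are conjugate up to inverse in $G_{u_i}$ (here $u_i=\tau(e_i)=\iota(e_{i+1})$); by the uniqueness built into the choice of roots (\Cref{def:choice-of-roots}), this forces $\rho_{e_i}=\rho_{\ol e_{i+1}}$, so the edges $e_1,\dots,e_\ell$ form a closed path based at $\rho$ inside a single connected component, which must be $\Delta_1$. Second, writing $\rho_i=\sigma^{a}$ and $\rho_{i+1}'=\sigma^{b}$ with $\sigma=\rho_{e_i}=\rho_{\ol e_{i+1}}$, the relation $g_i^{-1}\sigma^a g_i=\sigma^b$ together with the fact that $\sigma^i$ is conjugate to $\sigma^j$ only when $i=j$ gives $a=b$, whence $g_i$ centralizes $\sigma^a\ne1$ and therefore lies in $C_{G_{u_i}}(\sigma^a)=\gen{\sigma}=\gen{\rho_{e_i}}$. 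The same argument applied to the two boundary relations shows $g_0,g_\ell\in\gen{\rho}$ and $\rho_{\ol e_1}=\rho_{e_\ell}=\rho$.

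Putting this together, the chosen reduced writing of $c$ has every vertex letter $g_i$ a power of the root $\rho_{e_i}$ and every edge in $\Delta_1$, which is exactly the image of an element of $\pi_1(\cD_1,\rho)$; hence $C_\cG(\rho)\sgr\pi_1(\cD_1,\rho)$. The final equality $C_\cG(\rho)=C_{\cD_1}(\rho)$ is then formal: the containment just proved gives $C_\cG(\rho)=C_\cG(\rho)\cap\pi_1(\cD_1,\rho)=C_{\cD_1}(\rho)$, using that the embedding $\pi_1(\cD_1,\rho)\hookrightarrow\pi_1(\cG,v)$ preserves commutation. I expect the main obstacle to be the bookkeeping in the middle step: correctly matching the two Britton writings at the boundary and the interior, checking that all $\rho_i$ are nontrivial so that the relevant centralizers are exactly $\gen{\rho_{e_i}}$, and cleanly invoking the minimality/uniqueness of the choice of roots to collapse $\rho_{e_i}$ and $\rho_{\ol e_{i+1}}$ to a single vertex of $\Delta$ (with cyclic vertex groups treated as a degenerate case where the root is simply the chosen generator).
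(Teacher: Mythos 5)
Your proposal is correct and follows essentially the same route as the paper's proof: apply Britton's Lemma to the two reduced writings of $\rho c=c\rho$ to force each interior letter $g_i$ to be a power of the common root $\rho_{e_i}=\rho_{\ol{e}_{i+1}}$, so that the writing lives in the generators of the component $\cD_1$. The paper states this in one line, whereas you supply the (correct) bookkeeping with the edge elements $h_i$, the nontriviality of the $\rho_i$, and the root-uniqueness argument.
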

\begin{proof}
    Let $\rho\in G_v$ for some $v\in V(\Gamma)$. Let $g\in C_\cG(\rho)$ and take a reduced $(v,v)$-sound writing $g=g_0e_1g_1\dots g_{\ell-1}e_\ell g_\ell$. By Britton's lemma, since $g\rho=\rho g$, we must have that $e_i,\ol{e}_{i+1}$ are glued to a common root $\rho_i$ and $g_i$ must be a power of $\rho_i$. It follows that $g$ is written in the generators of $\cD_1$, and thus $g\in\pi_1(\cD_1,\rho)$. The statement  follows.
\end{proof}

\subsection{Relative outer automorphisms induced by centralizers}\label{sec:auto-centralizers}

Let $\fF$ be a family of finitely presented torsion-free groups with cyclic centralizers. Let $\cG=(\Gamma,\{G_v\},\{G_e\},\{\psi_e\})$ be a graph of groups with vertex groups in $\fF$ and with infinite cyclic edge groups. Consider a choice of roots $\cR(\cG)$ (see Definition \ref{def:choice-of-roots}). Up to changing the inclusion map $\psi_e:G_e\rar G_{\tau(e)}$ by conjugation, we can assume that the image of $\psi_e:G_e\rar G_{\tau(e)}$ is contained in $\gen{\rho_e}$ for some (uniquely determined) $\rho_e\in\cR(\cG)$.

For every root $\rho\in\cR(\cG)$ choose an element $c_\rho\in C_\cG(\rho)$ of its centralizer. Define the relative endomorphism $\Theta:\FG{\cG}\rar\FG{\cG}$ by setting $\Theta(e)=c_{\rho_{\ol{e}}}e\ol{c}_{\rho_e}$ for $e\in E(\Gamma)$. This means that for every root $\rho$, we are twisting all the edges connected to that root by the same element $c_\rho$. 

\begin{lem}\label{lem:identity-on-centralizers}
    The relative endomorphism $\Theta$ induces the identity in the centralizer $C_\cG(g)$ for every $g\in G_v$ with $v\in V(\Gamma)$.
\end{lem}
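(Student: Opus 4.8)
The plan is to compute $\Theta$ explicitly on a reduced sound writing of an arbitrary element of the centralizer, and to read off that it acts by conjugation by the centralizer element attached to the root of $g$. Throughout I may assume $g\neq 1$ and write $\rho\in G_v$ for the (unique) root of $g$, so that $g=\rho^k$; recall from \ref{def:relendo-universalgroup}(2) that $\Theta$ fixes every vertex group pointwise. A first, easy reduction: if $\rho$ is not conjugate in $G_v$ to any of the roots to which edges of $\Gamma$ are glued, then applying Britton's Lemma \ref{lem:sound-writing} to a reduced sound writing $a=g_0e_1g_1\cdots e_\ell g_\ell$ of an element $a\in C_\cG(g)$ forces $\ell=0$, whence $C_\cG(g)=\gen{\rho}\sgr G_v$ and $\Theta$ is the identity there. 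Otherwise, after conjugating $g$ by an element of $G_v$ (which conjugates the whole situation and affects neither hypothesis nor conclusion), we may assume that $\rho\in\cR(\cG)$ is a chosen root.

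The key structural input is the description of centralizer elements from the proof of Proposition \ref{prop:comparison-of-centralizers}. Exactly that Britton's-lemma argument — applied to $a g=g a$ and using that the vertex groups are torsion-free with cyclic centralizers, so that roots are unique — shows that any reduced sound writing $a=g_0e_1g_1\cdots e_\ell g_\ell$ of an element $a\in C_\cG(g)$ satisfies the following: setting $\rho_0=\rho_\ell=\rho$, each $e_i$ is glued at $\tau(e_i)$ to a root $\rho_i\in\cR(\cG)$ and $\ol e_{i+1}$ is glued to the same $\rho_i$, and each $g_i$ is a power of $\rho_i$, i.e. $g_i\in\gen{\rho_i}$. (In particular $g_0,g_\ell\in\gen{\rho}$, and this also gives $C_\cG(g)=C_\cG(\rho)$.)

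Now I would carry out the computation. Write $c_i:=c_{\rho_i}$, so $c_0=c_\ell=c_\rho$. Since $\rho_{\ol e_i}=\rho_{i-1}$ and $\rho_{e_i}=\rho_i$ along this path, the definition $\Theta(e)=c_{\rho_{\ol e}}\,e\,\ol c_{\rho_e}$ gives $\Theta(e_i)=c_{i-1}e_i\ol c_i$, while $\Theta(g_i)=g_i$. Substituting and regrouping,
$$\Theta(a)=g_0c_0\, e_1\,(\ol c_1 g_1 c_1)\, e_2\,(\ol c_2 g_2 c_2)\cdots e_\ell\,\ol c_\ell g_\ell .$$
Because $c_i\in C_\cG(\rho_i)$ commutes with $g_i\in\gen{\rho_i}$, every inner factor collapses, $\ol c_i g_i c_i=g_i$; and because $c_0=c_\ell=c_\rho$ commutes with $g_0,g_\ell\in\gen{\rho}$, the two end factors rearrange to $g_0c_0=c_\rho g_0$ and $\ol c_\ell g_\ell=g_\ell\ol c_\rho$. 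Hence $\Theta(a)=c_\rho\,a\,\ol c_\rho$ for all $a\in C_\cG(g)$. Thus $\Theta$ preserves $C_\cG(g)$ and restricts there to conjugation by $c_\rho$; since $c_\rho\in C_\cG(\rho)\sgr C_\cG(g)$, this is an inner automorphism of $C_\cG(g)$, i.e. $\Theta$ induces the identity in $\out{C_\cG(g)}$, which is the assertion.

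The main obstacle is not the final computation, which is short, but importing and correctly bookkeeping the structural description of centralizing elements: matching the labels $\rho_{\ol e_i},\rho_{e_i}$ attached to each edge along the path to the common roots $\rho_i$ of Proposition \ref{prop:comparison-of-centralizers}, and justifying (via uniqueness of roots in the vertex groups) both that a centralizer element stays inside the GBS-core component of $\rho$ with each intermediate label a power of $\rho_i$, and that $C_\cG(\rho^k)=C_\cG(\rho)$ so that one may reduce from $g$ to its root $\rho$.
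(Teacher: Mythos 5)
Your argument is correct and follows essentially the same route as the paper: Britton's lemma forces every letter $g_i$ in a reduced writing of a centralizing element to be a power of the common root $\rho_i$ attached to $e_i$ and $\ol e_{i+1}$, after which the interior twisting factors cancel. You are in fact more careful than the paper's own proof about the two boundary factors, and your conclusion that $\Theta$ restricts to conjugation by $c_\rho$ on $C_\cG(g)$ (rather than the literal identity) is exactly the reading needed for the subsequent composition formula $c_\rho''=c_\rho'c_\rho$.

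One aside is false, though harmless: $C_\cG(g)=C_\cG(\rho)$ does not hold in general for $g=\rho^k$. If the edges incident to $\rho$ are glued along proper powers of $\rho$, an element commuting with $\rho^k$ need not commute with $\rho$ (compare the development picture in Section \ref{sec:GBS-centralizers}, where $C(z^2)$ can strictly contain $C(z)\cong\bbZ$); the same phenomenon occurs at non-cyclic vertices. Your proof does not actually use this equality --- the Britton's-lemma analysis is applied directly to $ag=ga$ with $g=\rho^k$, and at the end you only need the true inclusion $C_\cG(\rho)\sgr C_\cG(g)$ --- so you should simply delete the parenthetical and the corresponding sentence in your closing paragraph about reducing from $g$ to its root.
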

\begin{proof}
    If $g\in G_v$ is not conjugated in $G_v$ to any root in $\cR(\cG)$, then $C(g)$ is contained in $G_v$ and we are done. For $\rho\in\cR(\cG)\cap G_v$, an arbitrary element of the centralizer $C(\rho)$ will have a sound writing $g_0e_1g_1\dots g_{\ell-1}e_\ell g_\ell$. Since the centralizers of the roots are cyclic in the vertex groups, this implies that $g_i$ must be a power of $\rho_{e_i}=\rho_{\ol{e}_{i+1}}$. Using the definition of $\Theta$, it follows that $\Theta(\dots e_ig_i e_{i+1}\dots)=\dots e_i\ol{c}_{\rho_{e_i}}g_ic_{\rho_{\ol{e}_{i+1}}}e_{i+1}\ldots =\dots e_ig_i e_{i+1}\dots$ and thus $\Theta$ induces the identity in $C(\rho)$, as desired.
\end{proof}

It follows from Lemma \ref{lem:identity-on-centralizers} that if the relative endomorphism $\Theta$ is defined by the elements $c_\rho$ for $\rho\in\cR(\cG)$, and the relative endomorphism $\Theta'$ is defined by the elements $c_\rho'$ for $\rho\in\cR(\cG)$, then their composition $\Theta'\circ\Theta$ coincides with the relative endomorphism $\Theta''$ defined by the elements $c_\rho''=c_\rho'c_\rho$ for $\rho\in\cR(\cG)$. In particular, the relative endomorphism $\Theta$ admits an inverse. Note also that, for every root $\rho\in\cyclicR{\cG}$, changing the choice of $c_\rho$ will only change $\Theta$ by equivalence, and thus we obtain a well-defined homomorphism
\begin{equation}\label{eq:homomorphism-tau}
\tau:\prod_{\rho\in\notcyclicR{\cG}}C(\rho) \rar \outrel{G}{T}    
\end{equation}
where $G=\pi_1(\cG)$ and $T$ is the tree associated to the graph of groups.

\begin{remark}
    The above map is not injective in general. For example, consider a graph of groups $\cG$ with two vertices $v,w$ with groups $G_v,G_w\not\cong\bbZ$ with cyclic centralizers; the group $G_v$ contains a root element $\rho$ and the group $G_w$ contains a root element $\sigma$. The graph of groups $\cG$ has one edge $e$ with $\iota(e)=v$ and $\tau(e)=w$; the stabilizer $G_e$ is isomorphic to $\bbZ$, and the generator which is glued to $\rho$ on one side and to $\sigma^2$ on the other side. Then we can consider the elements $c_\rho=e\sigma\ol{e}\in C(\rho)$ and $c_\sigma=\sigma\in C(\sigma)$: these define the relative endomorphism $\Theta$ with $\Theta(e)=c_\rho e\ol{c}_\sigma=e\sigma\ol{e}e\ol{\sigma}=e$ which is the identity.
\end{remark}

\begin{prop}\label{prop:kernel-free-abelian}
    The kernel of the above map $\tau$ is a finitely generated free abelian group.
\end{prop}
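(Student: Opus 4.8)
The plan is to identify $\ker\tau$ explicitly with a product of centers of the components of the GBS core, and then to use that centers of GBS groups are finitely generated free abelian. Write $\cD=\cD_1\sqcup\dots\sqcup\cD_k$ for the GBS core, and recall from \Cref{prop:comparison-of-centralizers} that for a root $\rho$ lying in the component $\cD_i$ we have $C(\rho)=C_{\cD_i}(\rho)$, the centralizer of the \emph{element} $\rho$ inside $\pi_1(\cD_i)$, and not all of $\pi_1(\cD_i)$. By definition a tuple $(c_\rho)_{\rho\in\notcyclicR{\cG}}$ lies in $\ker\tau$ exactly when the associated relative endomorphism $\Theta$ is equivalent to the identity, i.e. when there exist $a_v\in C(G_v)$ with $\Theta(e)=a_{\iota(e)}\,e\,\ol a_{\tau(e)}$ for every edge $e$ (completing the tuple by $c_\rho:=1$ on cyclic roots).

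First I would pin down the equivalence data $a_v$. If $G_v\not\cong\bbZ$ then, being torsion-free with cyclic centralizers, $G_v$ is non-abelian with trivial center; picking $g\in G_v$ whose maximal cyclic subgroup is not conjugate into any edge group --- possible since $G_v$ is non-elementary and only finitely many roots occur --- gives $C_\cG(g)=C_{G_v}(g)\sgr G_v$, whence $a_v\in C(G_v)\sgr Z(G_v)=\{1\}$. So $a_v=1$ at non-cyclic vertices, while at a cyclic vertex $a_v\in C(G_v)=C(\rho_v)$. Feeding $\Theta(e)=c_{\rho_{\ol e}}\,e\,\ol c_{\rho_e}=a_{\iota(e)}\,e\,\ol a_{\tau(e)}$ through Britton's Lemma \ref{lem:sound-writing} and isolating $e$ yields, for each edge, $\ol a_{\iota(e)}c_{\rho_{\ol e}}=e\,(\ol a_{\tau(e)}c_{\rho_e})\,\ol e$. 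Setting $x_\rho:=\ol a_{v(\rho)}c_\rho\in C(\rho)$ (so that $x_\rho=c_\rho$ at non-cyclic roots and $x_\rho=\ol a_v$ at cyclic ones, exactly one factor being nontrivial), this becomes the single uniform transport equation $(\star)\colon\ x_{\rho_{\ol e}}=e\,x_{\rho_e}\,\ol e$ for all $e\in E(\cD)$, with $x_\rho\in C(\rho)$; and $\ker\tau$ is precisely the set of non-cyclic coordinates of solutions of $(\star)$.

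Next I would solve $(\star)$ componentwise as a parallel-transport problem in $\pi_1(\cD_i)$. Fixing a basepoint root $\rho_*$ and a spanning tree of the component graph, the tree instances of $(\star)$ force $x_\rho=\gamma_\rho\,w_i\,\ol\gamma_\rho$, where $w_i:=x_{\rho_*}$ and $\gamma_\rho$ is the tree path; the memberships $x_\rho\in C(\rho)$ then say that $w_i$ commutes with each transported vertex generator $\ol\gamma_\rho\,\rho\,\gamma_\rho$, and the non-tree instances of $(\star)$ say that $w_i$ commutes with each loop. As these elements generate $\pi_1(\cD_i)$, this is exactly $w_i\in Z(\pi_1(\cD_i))$, and conversely each central $w_i$ gives a solution. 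Since transport is an injective homomorphism, projecting to non-cyclic coordinates produces a group isomorphism $\ker\tau\cong\prod_i Z(\pi_1(\cD_i))$, the product taken over the components containing a non-cyclic root. Finally each $Z(\pi_1(\cD_i))$ is finitely generated free abelian: it is torsion-free as a subgroup of $G$, it cannot contain $\bbZ^2$ unless $\pi_1(\cD_i)\cong\bbZ^2$ (the only GBS group with a central $\bbZ^2$), and in the remaining cases a central hyperbolic element would make the whole Bass--Serre action preserve a line, so every central element is elliptic and lies in a $\bbZ$ vertex group, forcing the center to be cyclic; thus so is the finite product.

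The main obstacle I anticipate is the reduction to $(\star)$ together with the correct bookkeeping of centralizers: one must keep in mind that $C(\rho)$ is the centralizer of $\rho$ and not the whole of $\pi_1(\cD_i)$ --- this is exactly what cuts the solution space of $(\star)$ down from $\pi_1(\cD_i)$, which is typically non-abelian, to its center --- and one must correctly match the cyclic-vertex equivalence data $\ol a_v$ with the same transport equation that governs the genuine twists $c_\rho$.
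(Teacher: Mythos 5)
Your argument is correct in substance but takes a genuinely different, and in fact sharper, route than the paper. The paper does not compute $\ker\tau$ exactly: it fixes a non-cyclic root $\rho$, observes that any kernel coordinate $c_\rho$ must centralize every loop at $v$ spelled out of roots and edges as in (\ref{eq:elements}), and then runs a case analysis on the axes of such elements in the Bass--Serre tree of $\cG$ (all elliptic; hyperbolic with a common axis; two distinct axes) to force $c_\rho$ into a subgroup isomorphic to $\bbZ$ or $\bbZ^2$, concluding only that $\ker\tau$ embeds into a finitely generated free abelian group. Your parallel-transport reformulation $(\star)$ instead identifies $\ker\tau$ exactly with $\prod_i Z(\pi_1(\cD_i))$, the product over the components of the GBS core containing a non-cyclic root, and then quotes the classification of centers of GBS groups (trivial, $\bbZ$, or $\bbZ^2$, the last occurring only for $\bbZ^2$ itself). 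Both the transport computation and the statement about centers check out: your elements $\gamma_\rho\rho\ol\gamma_\rho$ together with the non-tree loops do generate $\pi_1(\cD_i,\rho_*)$, and the embedding $\pi_1(\cD_i,\rho)\sgr\pi_1(\cG,v)$ together with Proposition \ref{prop:comparison-of-centralizers} justifies working entirely inside the core. Your version buys an explicit description of the kernel (in particular its rank), at the cost of some extra bookkeeping.

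One step needs repair. To get $a_v=1$ at non-cyclic vertices you invoke an element $g\in G_v$ whose root is not conjugate into any edge group, ``possible since $G_v$ is non-elementary''; but the vertex groups are only assumed to be finitely presented, torsion-free, with cyclic centralizers, and nothing in these hypotheses rules out a priori that such a group is covered by conjugates of finitely many cyclic subgroups. The conclusion is nevertheless true and has a direct proof: if $a\in C(G_v)$ has a reduced sound writing of positive length, Britton's Lemma \ref{lem:sound-writing} applied to the identities $\ol{a}ga=g$ for all $g\in G_v$ forces all of $G_v$ to be conjugate into a cyclic edge group, which is absurd for $G_v\not\cong\bbZ$; and if $a\in G_v$ then $a\in Z(G_v)$, so $C_{G_v}(a)=G_v$ would be cyclic. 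Replacing your existence claim by this argument makes the proof complete.
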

\begin{proof}
    Consider a choice of elements $c_\rho\in C(\rho)$ for $\rho\in\notcyclicR{\cG}$ (and set $c_\rho=1$ for $\rho\in\cyclicR{\cG}$) and take the corresponding relative endomorphism $\Theta$, so that $\Theta(e)=c_{\rho_{\ol{e}}}e\ol{c}_{\rho_e}$. Suppose that $\Theta$ is equivalent to the identity. This means that there are elements $a_v\in C(G_v)$ for $v\in V(\Gamma)$ satisfying the following property: for all $e\in E(\Gamma)$ we have $c_{\rho_{\ol{e}}} e \ol{c}_{\rho_e} = \Theta(e) = a_{\iota(e)} e \ol{a}_{\tau(e)}$. Note that, whenever $G_v\not\cong\bbZ$, the centralizer $C(G_v)$ is trivial, and thus $a_v=1$.
    
    Now fix $v\in V(\Gamma)$ with $G_v\not\cong\bbZ$ and fix a root $\rho\in\notcyclicR{\cG}\cap G_v$. Suppose that
    \begin{equation}\label{eq:elements}
    x=\rho^{r_0} e_1 \rho_1^{r_1} e_2 \rho_2^{r_2} \dots \rho_{\ell-1}^{r_{\ell-1}} e_\ell \rho^{r_\ell}\in\pi_1(\cG,v)
    \end{equation}
    is a reduced sound writing, with $\rho_{e_i}=\rho_{\ol{e}_{i+1}}=\rho_i\in\cR(\cG)$ and $r_i\in\bbZ$ for $i=1,\dots,\ell-1$, and with $\rho_{e_\ell}=\rho_{\ol{e}_1}=\rho$. Then we must have $x = a_v x \ol{a}_v = \Theta(x) = c_\rho x \ol{c}_\rho$ and thus $c_\rho\in C(x)$.

    By the Bass-Serre correspondence, the graph of groups $\cG$ corresponds to an action of $\pi_1(\cG,v)$ on a tree $T$. 
    If $c_\rho\in C(x)$ and $x$ acts on $T$ hyperbolically with axis $L$, then $xc_\rho(L)=c_\rho x(L)=c_\rho(L)$ and thus $c_\rho$ preserves $L$; and similarly $\ol{c}_\rho$ preserves $L$ too. We now deal with cases.

    CASE 1: For every element $x\in\pi_1(\cG,v)$ as in (\ref{eq:elements}), $x$ acts on $T$ elliptically. Call $\cD=\cD_1\sqcup\dots\sqcup\cD_k$ the GBS core of $\cG$ and assume that $\rho$ belongs to $\cD_1$. Then this forces $\pi_1(\cD_1)$ to be isomorphic to $\bbZ$. In particular, we obtain that $c_\rho$ must belong to $C_\cG(\rho) = C_{\cD_1}(\rho) \cong \bbZ$ (by Proposition \ref{prop:comparison-of-centralizers}).

    CASE 2: There are elements $x\in\pi_1(\cG,v)$ as in (\ref{eq:elements}) and acting hyperbolically on $T$, but all of them have the same axis. Call $\cD=\cD_1\sqcup\dots\sqcup\cD_k$ the GBS core of $\cG$ and assume that $\rho$ belongs to $\cD_1$. Then this forces $\pi_1(\cD_1)$ to be isomorphic to $\bbZ^2$ or $K$ (the Klein bottle group). In particular, we obtain that $c_\rho$ must belong to $C_\cG(\rho) = C_{\cD_1}(\rho) \cong \bbZ$ or $\bbZ^2$ (by Proposition \ref{prop:comparison-of-centralizers}).

    CASE 3: There are at least two elements $x,x'\in\pi_1(\cG,v)$ as in (\ref{eq:elements}), and acting hyperbolically on $T$ with distinct axes $L,L'$. If $L\cap L'$ is a half-line, then $c_\rho,\ol{c}_\rho$ are inverse of each other and both preserve the half-line, and thus they must act as the identity on the half-line. If $L\cap L'$ is a segment, then $c_\rho$ preserves the segment and acts without inversions; therefore $c_\rho$ must act as the identity on the segment. If $L\cap L'$ is empty, then we take the unique shortest segment joining $L$ with $L'$, and $c_\rho$ must fix each point of that segment. In all cases, we find a point $t\in T$ such that $c_\rho$ lies in the stabilizer $H$ of $t$. It follows that $c_\rho$ must lie in $H\cap C(\rho)\cong\bbZ$.
    
    In all cases, we deduce that $c_\rho$ must lie in a subgroup of $\pi_1(\cG,v)$ which is isomorphic to either $\bbZ$ or $\bbZ^2$. It follows that $\ker\tau$ is a subgroup of a finitely generated free abelian group, and thus it must be a finitely generated free abelian group itself.
\end{proof}

\begin{remark}
    In the above discussion, we always twist all the edges glued at a certain root $\rho$ by the same element $c_\rho\in C(\rho)$. However, it might seem that the definition also works if we take a different twisting element $c_e\in C(\rho)$ for every edge $e$ glued onto $\rho$. In fact, a choice of twisting elements $c_e$ different for each edge actually defines a relative endomorphism $\Theta$, but the proof of Lemma \ref{lem:identity-on-centralizers} fails, and $\Theta$ might have no inverse. A counterexample is given by the Baumslag-Solitar group $\mathrm{BS}(2,4)=\pres{a,e}{a^2e=ea^4}$ by choosing distinct twisting elements $c_e,c_{\ol{e}}\in C(a)$ given by $c_{\ol{e}}=1$ and $c_e=ea\ol{e}\, \ol{a}$, obtaining the relative endomorphism $\Theta$ with $\Theta(a)=a$ and $\Theta(e)=eae\ol{a} \, \ol{e}$, which fails to be surjective.
\end{remark}

\begin{remark}\label{rem:centralizer_auto_are_inner_in_GBS}
Let $\cD=\cD_1\sqcup\dots\sqcup\cD_k$ be the GBS core of $\cG$. From Remark \ref{rem:embedding_GBS_in_group}, we have that, given $\rho\in V(\Delta_i)\subseteq\cR(\cG)$ with $\rho\in G_v$ for $v\in V(\Gamma)$, we have an embedding $G_i=\pi_1(\cD_i,\rho)\rar\pi_1(\cG,v)$. For each element $c_\rho\in C_\cG(\rho)$ in the centralizer of $\rho$, the relative endomorphism $\Theta:\FG{\cG}\rar\FG{\cG}$ defined as $\Theta(e)=c_{\rho_{\ol{e}}}e\ol{c}_{\rho_e}$ for $e\in E(\Gamma)$, restricts to an automorphism $\theta|_{G_i}: G_i \to G_i$ by Proposition \ref{prop:comparison-of-centralizers}, which is in fact an inner automorphism of $G_i$, i.e. $[\theta_{G_i}]$ is trivial in $\outr{G_i}$.
\end{remark}

\subsection{Reduction to the automorphism groups of GBSs}

\begin{thm}\label{thm:description-outrel}
    Let $\fF$ be a family of finitely presented torsion-free groups with cyclic centralizers. Let $\cG$ be a graph of groups with vertex groups in $\fF$ and with infinite cyclic edge groups. Suppose that $G=\pi_1(\cG)$ is freely indecomposable. Then there is an exact sequence
    \[
        1\rar Z\rar\prod_{\rho\in\cR}C(\rho)\xrightarrow{\tau} \outr{G}\xrightarrow{\sigma}\outr{G_1}\times\outr{G_2}\times\dots \times\outr{G_s}\times(D_\infty)^t \to 1
    \]
    for some finite set of elliptic elements $\cR\subseteq G$, for some finitely generated free abelian group $Z$, for some integers $s,t\ge0$, for some GBSs $G_1,\dots,G_s$ {\rm(}here $D_\infty$ is the infinite dihedral group{\rm)}.
\end{thm}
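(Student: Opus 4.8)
The plan is to realize the sequence as the concatenation of the twisting homomorphism $\tau$ of \eqref{eq:homomorphism-tau} with a \emph{restriction-to-the-core} homomorphism $\sigma$. First I fix $\cG$ (working with its Bass--Serre tree, which by \Cref{lem:relauto-domination} depends only on the deformation space), choose roots $\cR(\cG)=\notcyclicR{\cG}\sqcup\cyclicR{\cG}$, and form the GBS core $\cD=\cD_1\sqcup\dots\sqcup\cD_k$ with $G_i=\pi_1(\cD_i)$. I then sort the components into three kinds: those with $G_i\cong\bbZ$, which I discard since $\outr{G_i}=1$; the degenerate ones with $G_i\cong\bbZ^2$ or $G_i\cong K$, which are excluded from \Cref{thm:GBS-JSJ} and whose relative outer automorphism groups I compute by hand (for instance $\outr{\bbZ^2}\cong D_\infty$, a relative endomorphism fixing the unique root $\rho$ and sending the loop to $\rho^n e^{\pm1}$, these composing as $\bbZ\rtimes\bbZ/2$), accounting for the $(D_\infty)^t$ factors; and the $s$ remaining \emph{genuine} GBS components, where \Cref{thm:GBS-JSJ} guarantees that $\cD_i$ is already a JSJ of $G_i$, so that $\outr{G_i}$ is attached to this canonical tree. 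Taking $\cR:=\notcyclicR{\cG}$, the left half of the sequence is then immediate: $\tau$ is defined, and \Cref{prop:kernel-free-abelian} identifies $Z:=\ker\tau$ as a finitely generated free abelian group.

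Next I construct $\sigma$. The key lemma is that every relative endomorphism $\Theta$ of $\FG{\cG}$ restricts to a relative endomorphism $\Theta_i$ of $\FG{\cD_i}$. Indeed, $\Theta$ fixes every vertex group pointwise, hence fixes every root; and for an edge $e$ of $\Delta_i$ the element $\Theta(e)\in\pi_1(\cG,\iota(e),\tau(e))$ must conjugate $\im{\psi_{\ol e}}$ to $\im{\psi_e}$, both of which are powers of roots, i.e. elliptic. Feeding a reduced sound writing of $\Theta(e)$ into the Britton's-lemma argument of \Cref{prop:comparison-of-centralizers} forces every syllable to be a power of a root and consecutive edges to be glued along common roots, so $\Theta(e)\in\pi_1(\cD_i,\rho_{\ol e},\rho_e)$. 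This restriction is functorial, hence sends automorphisms to automorphisms and respects composition; and since an equivalence of $\Theta$ restricts to an equivalence of each $\Theta_i$ (the conjugating constants at non-cyclic vertices are trivial, while those at cyclic roots give honest $\cD_i$-equivalences), it descends to a homomorphism $\sigma\colon\outr{G}\to\prod_{i=1}^{s}\outr{G_i}\times(D_\infty)^t$.

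Exactness at $\outr{G}$ splits into two inclusions. The inclusion $\image\tau\subseteq\ker\sigma$ is exactly \Cref{rem:centralizer_auto_are_inner_in_GBS}: a twist restricts to an inner automorphism of each $G_i$. For the converse, suppose $[\Theta]\in\ker\sigma$, so each $[\Theta_i]$ is trivial in $\outr{G_i}$; under the identification of $\outr{G_i}$ with equivalence classes of invertible relative endomorphisms of $\FG{\cD_i}$, triviality means $\Theta_i$ is equivalent to the identity. Hence there are $a_\rho\in C_{\cD_i}(\rho)=C_\cG(\rho)$ (the equality by \Cref{prop:comparison-of-centralizers}) with $\Theta(e)=\Theta_i(e)=a_{\rho_{\ol e}}\,e\,\ol a_{\rho_e}$ for every edge $e$ of $\Delta_i$. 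Collecting the $a_\rho$ over all components and recalling that $\Theta$ fixes vertex groups, I read off that $\Theta$ is precisely the twist $\tau\bigl((a_\rho)_\rho\bigr)$, so $[\Theta]\in\image\tau$.

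Finally, for surjectivity I reverse the restriction: given endomorphisms $\Theta_i$ of $\FG{\cD_i}$ representing a target tuple, I use the inclusions $\FG{\cD_i}\hookrightarrow\FG{\cG}$ of \Cref{rem:embedding_GBS_in_group} to define $\Theta$ on $\FG{\cG}$ as the identity on every vertex group and as $\Theta(e):=\Theta_i(e)$ on the edges of each $\Delta_i$; because $\phi_e=\psi_e$ in the core, the edge relations are preserved, $\Theta$ is a relative endomorphism, invertible since each $\Theta_i$ is, and $\sigma([\Theta])$ is the prescribed tuple (the order-two generators of the $D_\infty$ factors being realized by $e\mapsto e^{-1}$, which is consistent precisely because degenerate cores are glued along $\rho^{\pm1}$). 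I expect the main obstacle to be the well-definedness of $\sigma$: the restriction lemma above, together with the verification that $\Theta_i$ genuinely represents a class of the relative outer automorphism group of the \emph{canonical} GBS structure on $G_i$. This is where Britton's lemma, the cyclic-centralizer hypothesis, and the three-way split of the components (with \Cref{thm:GBS-JSJ} ensuring the genuine cores are their own JSJ) all have to be combined.
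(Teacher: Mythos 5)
Your construction of $\sigma$ by restricting relative endomorphisms to the GBS core via Britton's lemma and the cyclic-centralizer hypothesis, the identification $\ker\sigma=\image\tau$ through \Cref{prop:comparison-of-centralizers}, the surjectivity argument by transferring words back along $\FG{\cD_i}\hookrightarrow\FG{\cG}$, and the three-way sorting of the components all coincide with the paper's proof. There is, however, one genuine gap: you run the entire argument on the Bass--Serre tree $T$ of the \emph{given} splitting $\cG$, so what your exact sequence computes is $\outrel{G}{T}$, whereas the theorem concerns $\outr{G}$, which by \Cref{prop:outr} means $\outrel{G}{J}$ for a cyclic JSJ decomposition $J$ of $G$. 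Your appeal to \Cref{lem:relauto-domination} does not close this gap, because that lemma only says the relative outer automorphism group is constant on a deformation space, and the given $\cG$ need not lie in the deformation space of the JSJ. Concretely, let $G$ be the fundamental group of a closed orientable surface of genus $2$ and let $\cG$ be the one-edge splitting along a separating simple closed curve $c$: the vertex groups are free of rank $2$ (so in $\fF$), the edge group is infinite cyclic, and $G$ is freely indecomposable. Here $\outr{G}=1$, since the cyclic JSJ of a closed surface group is a single vertex, yet $\outrel{G}{T}$ contains the infinite-order Dehn twist along $c$; indeed your recipe applied to this $\cG$ yields $\prod_{\rho\in\cR}C(\rho)\cong\bbZ^2$ surjecting onto $\outrel{G}{T}\cong\bbZ$ with $s=t=0$, which is not an exact sequence ending in $\outr{G}=1$. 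The paper therefore first disposes of the closed-surface case and then replaces $\cG$ by a cyclic JSJ decomposition $\cG'$, checking that this is legitimate because the JSJ's vertex groups are either contained in vertex groups of $\cG$ (hence inherit cyclic centralizers) or are flexible and hence free. Relatedly, the data in the theorem ($\cR$, the $G_i$, $s$, $t$) are defined from $\cG'$, so your choice $\cR:=\notcyclicR{\cG}$ is in general the wrong set.

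A secondary, fixable issue is in your surjectivity step: you transfer $\Xi$ back to $\Theta$ and declare $\Theta$ invertible ``since each $\Theta_i$ is.'' But the inverse of $\Xi$ supplied by \Cref{prop:relendo-FG-pi1} is a priori only an inverse up to equivalence, and an equivalence of $\Xi$ (which may conjugate by nontrivial elements of $C_{\cD}(\rho)$ at every vertex of $\Delta$) does not transfer to an equivalence of $\Theta$, since at a non-cyclic vertex $v$ of $\Gamma$ the only available conjugating constant lies in the trivial group $C(G_v)$. You need \Cref{lem:true-inverse} to upgrade to an inverse on the nose before transferring the writing back to $\FG{\cG}$, exactly as the paper does.
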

The elements appearing in the above Theorem \ref{thm:description-outrel} are defined as follows:
\begin{enumerate}
    \item We consider a JSJ decomposition $\cG'$ of $\cG$ relative to the family of its cyclic subgroups.
    \item We define $\cR=\notcyclicR{\cG'}$ to be a choice of non-cyclic roots for $\cG'$ (Definition \ref{def:choice-of-roots}). \item The homomorphism $\tau$ is the one defined in Section \ref{sec:auto-centralizers}.
    \item $Z=\ker\tau$ is the finitely generated free abelian group as in Proposition \ref{prop:kernel-free-abelian}.
    \item We define $\cD=\cD_1\sqcup\dots\sqcup\cD_k$ as the GBS core of $\cG'$ (as in Section \ref{sec:GBS-core}).
    \item We call $\cD_1,\dots,\cD_s$ the connected components with fundamental groups $G_i=\pi_1(\cD_i)$ not isomorphic to $\bbZ,\bbZ^2$ or $K$ (the Klein bottle group). This defines the integer $s\ge0$ and the groups $G_1,\dots,G_s$ appearing in the exact sequence.
    \item We call $\cD_{s+1},\dots,\cD_{s+t}$ the connected components with fundamental groups $\pi_1(\cD_i)$ isomorphic to $\bbZ^2$ or $K$ (the Klein bottle group). This defines the integer $t\ge0$ appearing in the exact sequence.
    \item We ignore the connected components $\cD_{s+t+1},\dots,\cD_k$ with fundamental groups $\pi_1(\cD_i)$ isomorphic to $\bbZ$ - these do not contribute to $\outr{G}$.
\end{enumerate}
In particular, if a cyclic JSJ decomposition for $G$ is given, then $\cR,s,t,G_1,\dots,G_s$ can be computed algorithmically.

\begin{remark}
    We recall again that in the statement of Theorem \ref{thm:description-outrel}, $C(\rho)$ denotes the centralizer of $\rho$ in $\pi_1(\cG)$, according to Definition \ref{def:centralizer-element}. This is isomorphic to the centralizer of $\rho$ in one of the GBSs $G_1,\dots,G_k$ (according to Proposition \ref{prop:comparison-of-centralizers}). This means that, in general, $C(\rho)$ will be an \textit{infinite GBS}, as described in Section \ref{sec:GBS-centralizers} (see Proposition \ref{prop:GBS-centralizer} and Figure \ref{fig:development}). In general $C(\rho)$ is not finitely generated.
\end{remark}

\begin{remark}
    In the above Theorem \ref{thm:description-outrel}, the condition of having cyclic centralizers inside the vertex groups can be weakened. In fact, we only need such a condition for the roots that appear in the cyclic JSJ decomposition $\cG'$. Note that these might be a different (and larger) set than the ones that appear in the initial given splitting $\cG$.
\end{remark}

\begin{proof}
    Let $\cG=(\Gamma,\{G_v\},\{G_e\},\{\psi_e\})$ be a graph of groups, and call $G=\pi_1(\cG)$ acting on the corresponding tree $T$. If $G$ is isomorphic to the fundamental group of a closed surface, then $\outr{G}$ is trivial and we are done; so we assume that this is not the case. Consider a JSJ decomposition for $\pi_1(\cG)$ over the family of its cyclic subgroups: vertex groups of such decomposition are either contained in a vertex group of $\cG$, or they are flexible, and thus isomorphic to a free group by \cite{GL17} (see \cite[Proposition 2.18]{ACK-iso1} for details); in any case, vertex groups have cyclic centralizers. Therefore, without loss of generality, we can assume that $\cG$ is a JSJ decomposition of $G$ over the family of its cyclic subgroups.

    Consider a choice of roots $\cR(\cG)$ (see Definition \ref{def:choice-of-roots}). For every $e\in E(\Gamma)$ we choose a generator $z_e$ for $G_e$. Up to changing the inclusion map $\psi_e:G_e\rar G_{\tau(e)}$ by conjugation, we can assume that $\psi_e(z_e)=\rho_e^{m_e}$ for some (unique) $\rho_e\in\cR(\Gamma)$ and $m_e\in\bbZ\setminus\{0\}$. Let also $\cD=(\Delta,\{H_v\}_{v\in V(\Delta)},\{H_e\}_{e\in E(\Delta)},\{\phi_e\}_{e\in E(\Delta)})$ be the GBS core of $\cG$, and call $\cD=\cD_1\sqcup\dots\sqcup\cD_k$ be its connected components. For $i=1,\dots,k$ we call $G_i:=\pi_1(\cD_i)$ acting on the corresponding tree $T_i$.

    Given an outer automorphism $\theta:\pi_1(\cG)\rar\pi_1(\cG)$ relative to $T$ (Definition \ref{def:relauto-tree}), we consider the corresponding relative endomorphism $\Theta:\FG{\cG}\rar\FG{\cG}$, given by Proposition \ref{prop:relendo-FG-pi1}, and defined up to equivalence (see Definition \ref{def:relendo-equivalence}). For $e\in E(\Gamma)$ we can take a reduced $(\iota(e),\tau(e))$-sound writing $\Theta(e)=g_0e_1g_1\dots g_{\ell-1}e_\ell g_\ell$, and we must have that $\rho_{\ol{e}}^{m_{\ol{e}}}\Theta(e)=\Theta(e)\rho_e^{m_e}$. But for $i=0,\dots,\ell$, by Lemma \ref{lem:sound-writing} and using the fact that we are working with groups with cyclic centralizers, we obtain that $g_i$ must be a power of $\rho_{e_i}=\rho_{\ol{e}_{i+1}}$. In particular, we can define a relative endomorphism $\Xi$ of $\cD$ (or, to be formal, relative endomorphisms $\Xi_i$ of $\cD_i$ for $i=1,\dots,k$) given by $\Xi(e)=g_0e_1g_1\dots g_{\ell-1}e_\ell g_\ell$ (the same reduced writing as $\Theta(e)$) for $e\in E(\Delta)$.

    Notice that different reduced sound writings for $\Theta(e)$ produce the same element $\Xi(e)$. Changing $\Theta$ by equivalence (according to Definition \ref{def:relendo-equivalence}) gives an equivalent relative endomorphism $\Xi$. In particular, we obtain a family of relative endomorphisms $\xi_i:\pi_1(\cD_i)\rar\pi_1(\cD_i)$ for $i=1,\dots,k$, (defined up to composing with conjugations). Notice that the composition of automorphisms $\theta$ translates into the composition of relative endomorphisms $\Theta$, which translates into the composition of relative endomorphisms $\Xi$, which translates into the composition of relative endomorphisms $\xi_i$ for $i=1,\dots,k$. In particular, since $\theta$ is invertible, so is each $\xi_i$. Therefore, we obtain a homomorphism $\sigma$
    \[
        \outrel{G}{T}\xrightarrow{\sigma} \outrel{G_1}{T_1}\times \outrel{G_2}{T_2}\times\dots \times\outrel{G_k}{T_k}
    \]
    and we want to compute $\ker\sigma$ and prove that $\sigma$ is surjective.

    SURJECTIVITY: Let $\xi_i:\pi_1(\cD_i)\rar\pi_1(\cD_i)$ be a family of relative endomorphisms, such that each of them admits an inverse. By Proposition \ref{prop:relendo-FG-pi1} we obtain a relative endomorphism $\Xi$ of $\cD$ (or, to be formal, relative endomorphisms $\Xi_i$ of $\cD_i$ for $i=1,\dots,k$), which admits an inverse up to equivalence. By Lemma \ref{lem:true-inverse} we have that $\Xi$ also has an actual inverse (not up to equivalence). We take a reduced $(\iota_\Delta(e),\tau_\Delta(e))$-sound writing $\Xi(e)=g_0e_1g_1\dots g_{\ell-1}e_\ell g_\ell\in\FG{\cD}$ and we define $\Theta(e)=g_0e_1g_1\dots g_{\ell-1}e_\ell g_\ell\in\FG{\cG}$. Note that different sound writings for $\Xi(e)$ produce the same element $\Theta(e)\in\FG{\cG}$. Note also that this defines a relative endomorphism $\Theta$ of $\cG$. It is easy to check that the inverse of $\Xi$ also defines a relative endomorphism of $\cG$, which is the inverse of $\Theta$ (here it is important that $\Xi$ has an actual inverse, and not only an inverse up to equivalence). By Proposition \ref{prop:relendo-FG-pi1}, we obtain a relative endomorphism $\theta:\pi_1(\cG)\rar\pi_1(\cG)$ which is invertible, and thus an element $\theta\in\outrel{G}{T}$. By definition we have $\sigma(\theta)=(\xi_i)_{i=1,\dots k}$ and this proves that $\sigma$ is surjective.

    KERNEL: Suppose that $\theta\in\ker\sigma$ for some $\theta\in\outrel{G}{T}$. Let $\Theta$ be the relative endomorphism of $\cG$ corresponding to $\theta$. Let $\Xi$ be the relative endomorphism of $\cD$ defined as above. Since $\theta\in\ker\sigma$, we must have that $\Xi$ is equivalent to the identity. According to Definition \ref{def:relendo-equivalence}, this means that there are elements $c_\rho\in C_\cD(\rho)$ for $\rho\in V(\Delta)=\cR(\cG)$ such that $\Xi(e)=c_{\iota_\Delta(e)}e\ol{c}_{\tau_\Delta(e)}$. But by Proposition \ref{prop:comparison-of-centralizers} we have that $C_\cD(\rho)=C_\cG(\rho)$, and thus we obtain that $\Theta(e)=c_{\iota_\Delta(e)}e\ol{c}_{\tau_\Delta(e)}$. This means that $\Theta$ is an automorphism induced by centralizers, as defined in Section \ref{sec:auto-centralizers}, i.e. $\theta$ belongs to the image of $\tau$. On the other hand, it is immediate (using the definition of equivalence of relative endomorphisms) to check that the composition $\sigma\circ\tau$ is trivial, and therefore the kernel of $\sigma$ coincides with the image of $\tau$.

    The kernel of $\tau$ had already been computed in Proposition \ref{prop:kernel-free-abelian}, and proved to be a finitely generated free abelian group $Z$. Finally, we observe that $\outrel{G}{T}=\outr{G}$ since $\cG$ is a cyclic JSJ decomposition for $G$, Similarly, when $G_i\not\cong\bbZ,\bbZ^2,K$ (the Klein bottle group) we have that $\cD_i$ is a cyclic JSJ decomposition for $G_i$, and thus $\outrel{G_i}{T_i}=\outr{G_i}$. When $G_i\cong\bbZ^2,K$ one can explicitly compute $\outrel{G_i}{T_i}\cong D_\infty$ (the infinite dihedral group). When $G_i\cong\bbZ$ one can explicitly compute $\outrel{G_i}{T_i}=1$. The statement follows.
    \end{proof}

    \begin{remark}
        In the proof of Theorem \ref{thm:description-outrel}, when proving surjectivity, it might seem that we are defining a section of $\sigma$; however, this is not the case. In fact, the relative endomorphism $\Xi$ of $\cD$ is defined up to equivalence, and we are making a choice of a representative in the equivalence class. Different choices of representatives can lead to different relative endomorphisms $\Theta$ of $\cG$, and thus $\Theta$ is not uniquely defined (not even up to equivalence).
    \end{remark}
  
\begin{cor}[Graphs of torsion-free hyperbolic groups]\label{cor:graph_hyperbolic_group}
Let $\cG$ be a graph of groups with torsion-free hyperbolic vertex groups and with infinite cyclic edge groups. Suppose that $G=\pi_1(\cG)$ is freely indecomposable. Then there are exact sequences
     \[
    \begin{array}{c}
    1 \to \outr{G} \to \outz{G} \to \prod MCG(\Sigma_v)\times (\hbox{finite group})\times (\hbox{fg abelian});\\[0.2cm]
        1\rar Z \rar\prod_{\rho\in\cR}C(\rho) \xrightarrow{\tau} \outr{G}\xrightarrow{\sigma}\outr{G_1}\times\outr{G_2}\times\dots \times\outr{G_s}\times(D_\infty)^t \to 1;
    \end{array}
    \]
    for some finite set of elliptic elements $\cR\subseteq G$, for some finitely generated free abelian group $Z$, for some integers $s,t\ge0$, and for some GBSs $G_1,\dots,G_s$ {\rm(}here $D_\infty$ is the infinite dihedral group{\rm)}.

    In the first exact sequence, the product runs over the flexible vertex groups of the JSJ decomposition of $G$ {\rm(}which are fundamental groups of surfaces with boundary{\rm)}, the finite groups occur as outer automorphisms of the rigid and cyclic vertex groups, and the finitely generated abelian groups occur as outer automorphisms sending cyclic vertex groups to proper subgroups containing a conjugate of themselves.
\end{cor}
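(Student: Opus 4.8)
The plan is to establish the two sequences separately, deriving each from results already in hand. The second sequence is almost immediate from Theorem~\ref{thm:description-outrel}: the one hypothesis to verify is that the vertex groups of a cyclic JSJ decomposition $\cG'$ of $G$ have cyclic centralizers. Torsion-free hyperbolic groups have this property, since in a hyperbolic group the centralizer of a nontrivial element is virtually cyclic and torsion-freeness promotes this to infinite cyclic. As recorded in the proof of Theorem~\ref{thm:description-outrel}, each JSJ vertex group is either contained in one of the given hyperbolic $G_v$ (hence a subgroup of a torsion-free hyperbolic group, so with cyclic centralizers) or is flexible and therefore a free, in fact surface-with-boundary, group (again with cyclic centralizers). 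Thus Theorem~\ref{thm:description-outrel} applies verbatim and yields the second exact sequence.

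For the first sequence the starting point is Proposition~\ref{prop:outr}, which already identifies $\outr{G}$ with the kernel of a homomorphism
\[
\eta\colon\outz{G}\longrightarrow\prod_{G_v\not\cong\bbZ}\out{G_v}\times\prod_{G_v\cong\bbZ}P_v/Q_v .
\]
It therefore suffices to show that the image of $\eta$ lands inside a group of the stated form $\prod MCG(\Sigma_v)\times(\text{finite})\times(\text{fg abelian})$, after which $\outr{G}=\ker\eta$ gives exactness at $\outr{G}$ and at $\outz{G}$. I would analyze the target factor by factor according to the three types of JSJ vertex. The cyclic vertices $G_v\cong\bbZ$ contribute exactly the groups $P_v/Q_v$, which are finitely generated abelian by the remark following Proposition~\ref{prop:outr}; writing each as the product of its finite torsion subgroup and a free abelian part accounts for the ``fg abelian'' factor together with part of the ``finite'' factor, the free part corresponding precisely to the automorphisms that send a cyclic vertex group into a proper subgroup containing a conjugate of itself.

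For a flexible vertex, $G_v$ is the fundamental group of a compact surface $\Sigma_v$ with boundary and the incident edge groups are its boundary subgroups; any outer automorphism of $G_v$ in $\image(\eta_v)$ must preserve this peripheral structure up to conjugacy, so $\image(\eta_v)$ lies in the mapping class group $MCG(\Sigma_v)$. By Levitt's result quoted in the remark after Proposition~\ref{prop:outr}, together with the fact that $\out{G_e}=\out{\bbZ}$ is finite, $\image(\eta_v)$ moreover contains $MCG(\Sigma_v)$ with finite index, which is the precise sense in which the surface factors are ``virtually'' the full mapping class groups. The remaining, and most substantial, point concerns the rigid vertices, for which I claim $\image(\eta_v)$ is finite: here $\image(\eta_v)$ is contained in the group of outer automorphisms of $G_v$ acting on each incident edge group as a conjugation (up to the finite ambiguity $\out{\bbZ}$), and the assertion is that this relative outer automorphism group is finite whenever $G_v$ is rigid. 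Assembling the three contributions gives the target $\prod MCG(\Sigma_v)\times(\text{finite})\times(\text{fg abelian})$ with kernel $\outr{G}$, completing the first sequence.

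The step I expect to be the main obstacle is this finiteness at rigid vertices: it is the only point where genuine hyperbolic-geometry input enters, rather than the bookkeeping already packaged in Proposition~\ref{prop:outr}. The argument is the standard Bestvina--Paulin/Rips--Sela degeneration: were the relative outer automorphism group of a rigid $G_v$ infinite, precomposing with infinitely many distinct such automorphisms and rescaling the resulting actions would produce a nontrivial small action of $G_v$ on an $\mathbb{R}$-tree fixing the peripheral subgroups, and the Rips machine would then extract a nontrivial cyclic splitting of $G_v$ relative to its peripheral structure, contradicting rigidity. Care is needed to phrase rigidity relative to exactly the peripheral structure carried by the incident edge groups and to run the degeneration argument in the torsion-free hyperbolic setting.
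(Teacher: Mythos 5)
Your proposal is correct and follows essentially the same route as the paper, which derives the second sequence directly from Theorem~\ref{thm:description-outrel} (after noting that torsion-free hyperbolic groups, their relevant subgroups, and the flexible surface-with-boundary vertex groups all have cyclic centralizers) and the first from Proposition~\ref{prop:outr} together with the standard trichotomy of cyclic JSJ vertex groups, with finiteness at rigid vertices supplied by the Sela/Levitt/Guirardel--Levitt rigidity results that you reprove via Bestvina--Paulin. Your explicit degeneration argument at rigid vertices is exactly the input the paper implicitly cites, so there is no substantive difference in approach.
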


From the combination theorem of Bestvina and Feighn, see \cite{BF92}, we have that $G$ is hyperbolic if and only if all the associated GBSs $G_i$ are cyclic. In this case, the centralizers $C(\rho)$ are also cyclic, and the above exact sequences become
    \[
    \begin{array}{c}
    1 \to \outr{G} \to \outz{G} \to \prod MCG(\Sigma_v)\times (\hbox{finite group});\\[0.2cm]
        1\rar Z \rar\prod_{\rho\in\cR} \mathbb Z^\rho \xrightarrow{\tau} \outr{G} \to 1;
    \end{array}
    \]  
and so $\outr{G}$ is finitely generated abelian. Therefore, we recover the result of Sela for one-ended torsion-free hyperbolic groups, see Theorem \ref{thm:sela}.

\bibliographystyle{alpha}

\end{document}